\newcommand{\sC}{\mathscr C}
\newcommand{\sM}{\mathscr M}
\newcommand{\ZZ}{\mathbf{Z}}
\newcommand{\RR}{\mathbf{R}}
\newcommand{\Span}{\mathrm{Span}}
\newcommand{\Sym}{\mathrm{Sym}_\infty}
\newcommand{\Id}{\mathrm{Id}}
\newcommand{\one}{\boldsymbol{1}}
\newcommand{\se}{\subseteq}
\newcommand{\fhi}{\varphi}
\newcommand{\ro}{\varrho}
\newcommand{\inv}{^{-1}}
\newcommand{\vertiii}[1]{{\left\vert\kern-0.25ex\left\vert\kern-0.25ex\left\vert #1 
    \right\vert\kern-0.25ex\right\vert\kern-0.25ex\right\vert}}
\theoremstyle{plain}
\newtheorem{thm}{Theorem}
\newtheorem*{thm*}{Theorem}
\newtheorem{lem}[thm]{Lemma}
\newtheorem{prop}[thm]{Proposition}
\newtheorem{cor}[thm]{Corollary}
\theoremstyle{definition}
\newtheorem*{defn*}{Definition}
\newtheorem{defn}[thm]{Definition}
\newtheorem{example}[thm]{Example}
\newtheorem*{example*}{Example}
\newtheorem{rem}[thm]{Remark}
\newtheorem{scholium}[thm]{Scholium}
\newtheorem{question}[thm]{Question}
\newtheorem{problem}[thm]{Problem}
\newtheorem*{rem*}{Remark}
\begin{document}

\title[Fixed points in convex cones]{Fixed points in convex cones}

\begin{abstract}
We propose a fixed-point property for group actions on cones in topological vector spaces. In the special case of equicontinuous representations, we prove that this property always holds; this statement extends the classical Ryll-Nardzewski theorem for Banach spaces. When restricting to cones that are locally compact in the weak topology, we prove that the property holds for all distal actions, thus extending the general Ryll-Nardzewski theorem for all locally convex spaces.

Returning to arbitrary actions, the proposed fixed-point property becomes a group property, considerably stronger than amenability. Equivalent formulations are established and a number of closure properties are proved for the class of groups with the fixed-point property for cones.
\end{abstract}
\author{Nicolas Monod}

\address{EPFL, 1015 Lausanne, Switzerland}
%
%
%
\maketitle

\section{Introduction and results}
\begin{flushright}
\begin{minipage}[t]{0.7\linewidth}\itshape\small
As an art, mathematics has certain fundamental techniques.\\
In particular, the area of fixed-point theory has its specific artistry.
\begin{flushright}
\upshape\small
Sister JoAnn Louise Mark~\cite[p.~iii]{Mark_PhD}
\end{flushright}
\end{minipage}
\end{flushright}
\vspace{1mm}
\begin{flushright}
\begin{minipage}[t]{0.7\linewidth}\itshape\small
Traiter la nature par le cylindre, la sph\`ere, le c\^one\ \ldots
\begin{flushright}
\upshape\small
Paul C\'ezanne, letter to \'Emile Bernard, 15 April 1904~\cite[p.~617]{Bernard07}
\end{flushright}
\end{minipage}
\end{flushright}
\subsection{About fixed points}
A large part of fixed-point theory for groups, and perhaps the most widely applied, takes place in the following setting:

\smallskip
\begin{center}
A group $G$ acts on a compact convex set $K\neq \varnothing$ in a topological vector space $E$.
\end{center}

\smallskip
We will agree that the action on $K$ comes from a continuous linear representation on $E$; even if we are only given a continuous affine action on $K$, a suitable $G$-space $E$ can be reconstructed (as dual of the space of affine continuous functions on $K$).

\medskip
The following two themes are of particular interest:

\begin{enumerate}[label=(\Roman*)]
\item General fixed-point theorems without any restriction on the group $G$; this requires assumptions on $K$, $E$ and/or the nature of the action.\label{pt:RN}
\item Make no assumptions on $K$ nor on the action. Then, having always a $G$-fixed point becomes a striking property of a group $G$. \label{pt:amen}
\end{enumerate}

A powerful example of~\ref{pt:RN} is the original Ryll-Nardzewski theorem~\cite{Ryll-Nardzewski62}. Here the assumptions are that $E$ is a Banach space endowed with its \emph{weak} topology and that the action is (norm-)isometric. It is because compactness is only assumed in the weak topology that this result is so much stronger than earlier theorems of Kakutani~\cite{Kakutani38} and Hahn~\cite{Hahn67}; Ryll-Nardzewski further generalised his statement to distal actions in general locally convex spaces~\cite{Ryll-Nardzewski} (see also~\cite{Glasner-Megrelishvili} for further extensions).

\medskip
As for~\ref{pt:amen}, this fixed-point property (considered e.g.\ by Furstenberg~\cite[Def.~1.4]{Furstenberg63}) turns out to be equivalent to \emph{amenability}~\cite{Day61}, \cite{Rickert67}. Although amenability was originally introduced by von Neumann~\cite{vonNeumann29S} in his study of Banach--Tarski paradoxes, it has since become a familiar concept in a number of unrelated areas. The connexion to~\ref{pt:amen} is that finitely additive measures normalised to have \emph{total mass one} form a compact convex set.

\subsection{Compact is too restrictive}
The context above is too limited for certain applications. Even in the original case of Banach--Tarski paradoxes, the key question is whether our space $\RR^3$ carries a finitely additive measure defined on all subsets, invariant under isometries, \itshape and normalised for a given subset $A\se \RR^3$ of interest\upshape, e.g.\ a ball. In particular, such a measure $\mu$ would generally need to be unbounded (and infinite on large subsets). A natural invariant normalisation condition is $0<\mu(gA)<\infty$ for all isometries $g$; such measures form an invariant convex cone.

For the purpose of giving examples of paradoxes, Hausdorff~\cite{Hausdorff14_article} and Banach--Tarski~\cite{Banach-Tarski} could bypass this issue because they could consider instead subsets of the group of rotations; there, measures can be normalised over the entire group, thus still forming a compact set.

However, the question considered by von Neumann~\cite[p.~78]{vonNeumann29S} is really whether there exists a finitely additive measure on a set $X$, invariant under a group $G$, and normalised for some subset $A\se X$ that need not be $X$ -- nor indeed be invariant. In fact Tarski proved that this is equivalent to the non-paradoxicality of $A$~\cite[end of~\S\,3]{Tarski38}. Groups that always admit such a measure are called \textbf{supramenable} since Rosenblatt~\cite[4.5]{Rosenblatt_PhD}.

\medskip
It is known that supramenability is much stronger than amenability. It fails to hold for metabelian groups such as the Baumslag--Solitar group $\mathrm{BS}(1,2)$ or the lamplighter group $\ZZ/2 \wr \ZZ$ because supramenable groups cannot contain non-abelian free semigroups~\cite[2.3]{Rosenblatt_PhD}. In fact, supramenability is \emph{equivalent} to not containing a Lipschitz-embedded binary tree~\cite[Prop.~3.4]{Kellerhals-Monod-Rordam}.

On the other hand, examples of supramenable groups include all abelian groups, all locally finite groups, and more generally the groups that are \textbf{subexponential} in the sense that for every finite subset $S$ of the group, $\lim_{n\to\infty} |S^n|^{1/n}=1$, see~\cite[3.5]{Rosenblatt_PhD}. This argument was in fact already used in 1946 by Sierpi{\'n}ski, see pp.~115--117 in~\cite{Sierpinski46}.

\medskip
Another situation where compact convex sets are too restrictive is the case of Radon measures on locally compact spaces. Amenability can be characterised in terms of the existence of invariant probability measures on \emph{compact} topological spaces, but the non-compact case is of course also very interesting (we think e.g.\ of Haar measures); it has been explored e.g.\ in~\cite{Kellerhals-Monod-Rordam}, \cite{Matui-Rordam}. The positive Radon measures form, again, a convex cone; it has in general no affine base. A basic example is the affine group of the line, which is amenable but does not fix any non-zero Radon measure on the line.

\smallskip
Many other examples of convex cones without base arise, for instance in non-commutative operator algebras, see e.g.~\cite{Elliott-Robert-Santiago}.

\subsection{Convex cones}
Our starting point will be the following, to be further refined below (while earlier work is briefly reviewed in Section~\ref{sec:other}).

\smallskip
\begin{center}
A group $G$ acts on a convex proper cone $C \se E$ in a topological vector space $E$.\\
The question is whether there exists a \emph{non-zero} fixed point.
\end{center}

\smallskip
Recall that \emph{proper} means $C \cap (-C) = \{0\}$, or equivalently that the associated pre-order on $E$ is an order. It is understood that a \textbf{representation} of a group on a vector space $E$ is an action by linear maps that are assumed \emph{continuous} (hence weakly continuous) if a topology is given on $E$ and \emph{order-preserving} if a vector space pre-order (equivalently, a convex cone) is given for $E$. We then also speak of a representation on the cone, but the ambient linear representation is understood as part of the data.

\medskip
The topological condition on $C$ shall be that $C$ is \emph{weakly complete}. Choquet has eloquently made the case in his ICM address~\cite{Choquet63} that this condition captures many familiar cones encountered in analysis. Oftentimes, the spaces $E$ that we need to consider are weak in the sense that their topology coincides with the associated weak topology; in any case, the weak completeness assumption justifies that we shall only consider locally convex Hausdorff topological vector spaces (as is generally done in the compact convex case too).

For instance, the cone of positive Radon measures on a locally compact space is complete for the vague topology (Prop.~14 in~\cite[III \S\,1 No.\,9]{BourbakiINT14}), hence weakly complete since this topology is weak.

\medskip
This setting obviously includes the earlier picture, for if $K\se E$ is convex and compact or weakly compact, then the cone
$$C= K\rtimes \RR_+ :=\big\{(tk, t): t\geq 0,\, k\in K\big\}\kern10mm \text{in }\ E \oplus \RR$$
is convex, proper and weakly complete -- indeed even weakly locally compact~\cite[II \S\,7 No.\,3]{BourbakiEVT81}. This special case admits a base, $K\times\{1\}$, and this base is even $G$-invariant.

\subsection{How groups act on cones}
Having given up compactness, we need some bounds on the actions on cones; otherwise, even the group $\ZZ$ will act without non-zero fixed point on \emph{any} cone, simply by non-trivial scalar multiplication. We introduce two conditions -- which both trivially hold in the above case $C= K\rtimes \RR_+ $.

\begin{defn}
A representation on a convex cone $C$ in a topological vector space is of \textbf{locally bounded type} if there exists an element $x\neq 0$ in $C$ whose orbit is bounded. Here \emph{bounded} is understood is the sense of topological vector spaces rather than orders, that is: absorbed by any neighbourhood of zero~\cite[III \S\,1 No.\,2]{BourbakiEVT81}.
\end{defn}

This condition holds in all the examples that we aim at. For instance, if $G$ acts on a locally compact space $X$ and $C$ is the cone $C=\sM_+(X)$ of positive Radon measures, then the $G$-orbit of any Dirac mass is bounded. Indeed, the closure of the image of $X$ under the canonical map $X\to \sM_+(X)$ realises the one-point compactification of $X$ (Prop.~13 in~\cite[III \S\,1 No.\,9]{BourbakiINT14}). Notice that typically the orbits admit $0$ as an accumulation point.

In this example with $C=\sM_+(X)$, the convex sub-cone of bounded orbits is in fact \emph{dense} in $C$~\cite[III \S\,2 No.\,4]{BourbakiEVT81}, a situation that we can often reduce to (Proposition~\ref{prop:min} below). However, it is critical to keep in mind that even then, there are usually also unbounded orbits. If for instance we let $G=\ZZ$ act by translation on $C=\sM_+(\ZZ) \cong (\RR_+)^\ZZ$, then both the bounded and the unbounded orbits are dense; there are even invariant rays where $\ZZ$ acts unboundedly, namely exponentials.

\bigskip
We turn to the second condition, dual to local boundedness. As a motivation, we recall that \emph{any} infinite group acts on some locally compact space without preserving any non-zero Radon measure~\cite[4.3]{Matui-Rordam}. By contrast, it was shown in~\cite{Kellerhals-Monod-Rordam} that we can characterise supramenability by restricting our attention to \emph{cocompact} actions on locally compact spaces.

It is easily verified that for this specific example of Radon measures in the vague topology, the following general definition is precisely equivalent to the cocompactness of the $G$-action on the underlying space $X$:

\begin{defn}
A representation of a group $G$ on an ordered topological vector space $E$ is of \textbf{cobounded type} if there exists a continuous linear form $\fhi\in E'$ which $G$-dominates all other $\lambda\in E'$.

Here, an element $\fhi$ in an arbitrary (pre-)ordered vector $G$-space is said to \textbf{$G$-dominate} $\lambda$ if there exists $g_1, \ldots, g_n\in G$ with $\pm\lambda \leq \sum_i g_i \fhi$. The (topological) dual $E'$ is always endowed with the corresponding dual $G$-representation and dual pre-order.
\end{defn}

\begin{rem}\label{rem:cobounded}
As a further motivation for this definition, another example of a weakly complete cone with a $G$-representation (of locally bounded type) without non-zero fixed point is the cone of positive elements in $\ell^2(G)$. This cone is weakly complete~\cite[Prop.~5]{Becker85}. However, there is no non-zero fixed point as soon as $G$ is infinite.
\end{rem}

Finally, we can formalise our group-theoretical property:

\begin{defn}
A group $G$ has \textbf{the fixed-point property for cones} if every $G$-representation of locally bounded cobounded type on any weakly complete proper convex cone has a non-zero fixed point.
\end{defn}

In the light of theme~\ref{pt:amen} outlined at the beginning of this introduction, this is a strengthening of amenability. As we shall see, all subexponential groups enjoy the fixed-point property for cones (Theorem~\ref{thm:stable}\ref{pt:stable:SE}). On the other hand, this property implies supramenability and therefore the metabelian groups indicated above do not have the fixed-point property for cones.

\subsection{Groups with the fixed-point property for cones}
Just as the fixed-point property defined by Furstenberg turned out to be one of several equivalent incarnations of amenability, we shall establish a few characterisations in the conical setting. These equivalences (Theorem~\ref{thm:equiv}) will facilitate our further investigation of the fixed-point property.

\medskip
Starting from our original motivation, we would like to obtain invariant finitely additive measures normalised for arbitrary subsets of a $G$-space $X$; the key case concerns subsets of $X=G$ itself. In fact, we can more generally consider invariant ``integrals'' normalised for functions rather than subsets. To this end, given a bounded function $f\geq 0$ on $G$, we denote by $\ell^\infty(G; f)$ the ordered vector $G$-space of functions $G$-dominated by $f$.

\begin{defn}\label{def:int}
Let $f\geq 0$ be a non-zero bounded function on $G$. An \textbf{integral on $G$ normalised for $f$} is a $G$-invariant positive linear form $\mu$ on the vector space $\ell^\infty(G; f)$ such that $\mu(f)=1$.
\end{defn}

It should not be surprising that such integrals can be obtained as fixed points in cones. The existence of integrals under the stronger assumption that the group is subexponential was established in~\cite [Cor.~3]{Jenkins76} and~\cite[2.16]{Kellerhals_PhD}.

\medskip
A simpler task would be to find a functional on the smaller space $\Span(G f)$ spanned by the $G$-orbit of $f$. However, even in the case of a subset $A\se X$ with $f=\one_A$, it is not clear a priori that such a condition should imply supramenability. This amounts indeed to the question asked in Greenleaf's classical 1969 monograph~\cite[p.~18]{GreenleafBook} (and met with skepticism in~\cite{Jenkins80}, Remark p.~369).

We shall give a positive answer to this question in Corollary~\ref{cor:TP-SM} below. For now, we recall that there is a $G$-invariant positive linear form on $\Span(G f)$ normalised for $f$ if and only if the following \emph{translate property} introduced in~\cite{Rosenblatt_PhD} holds for $f$.

\begin{defn}\label{def:TP}
Let $f\geq 0$ be a non-zero bounded function on $G$. Then $f$ satisfies the \textbf{translate property} if
$$\sum_{i=1}^n  t_i g_i f \geq 0 \ \Longrightarrow\  \sum_{i=1}^n  t_i \geq 0\kern10mm(\forall t_i \in \RR, g_i\in G).$$
The group $G$ is said to have the translate property if this holds for all $f$.
\end{defn}

This condition is equivalent to the existence of an integral on $\Span(G f)$ because it ensures at once that the linear form $\sum_i  t_i g_i f \mapsto \sum_i  t_i$ is both well-defined and positive. It has been known~\cite[\S\,1]{GreenleafBook}, \cite[\S\,1]{Rosenblatt_PhD} that such an integral can be extended to all of $\ell^\infty(G; f)$ \emph{provided that $G$ is amenable}. Therefore, we shall need to show that the translate property implies amenability.

\medskip
Von Neumann's investigation of invariant finitely additive measures was not restricted to subsets of $G$ acting on itself, but it can readily be reduced  to it. Likewise, we shall extend the translate property to abstract ordered vector spaces and consider the \emph{abstract translate property} (defined in Section~\ref{sec:ATP} below). Finally, in parallel to the finitary characterisations of amenability in terms of Reiter properties, we consider in Section~\ref{sec:Reiter} the \emph{$f$-Reiter property} corresponding to a function $f$ on $G$.

\begin{thm}\label{thm:equiv}
The following properties of a group $G$ are equivalent.
\begin{enumerate}[label=(\arabic*)]
\item The fixed-point property for cones.\label{pt:equiv:FPC}
\item The translate property.\label{pt:equiv:TP}
\item The abstract translate property.\label{pt:equiv:ATP}
\item For every non-zero bounded function $f\geq 0$ on $G$, there is an integral on $G$ normalised for $f$.\label{pt:equiv:MG}
\item $G$ satisfies the $f$-Reiter property for every $0\neq f\in \ell^\infty(G)$.\label{pt:equiv:Reiter}
\end{enumerate}
\end{thm}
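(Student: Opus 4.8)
The plan is to establish the cycle of implications $\ref{pt:equiv:FPC}\Rightarrow\ref{pt:equiv:TP}\Rightarrow\ref{pt:equiv:Reiter}\Rightarrow\ref{pt:equiv:MG}\Rightarrow\ref{pt:equiv:ATP}\Rightarrow\ref{pt:equiv:FPC}$, with the understanding that most of the conceptual weight sits in the two implications that cross between the ``concrete'' world of functions on $G$ and the ``abstract'' world of ordered vector spaces. For \ref{pt:equiv:FPC}$\Rightarrow$\ref{pt:equiv:TP}, given a non-zero bounded $f\ge 0$ on $G$, I would take $E=\ell^\infty(G;f)$ with its natural order and its weak topology as dual of $\Span(Gf)$ (or a suitable completion thereof), let $C$ be the cone of non-negative functions in a weakly complete hull, and check that $f$ itself has bounded orbit (locally bounded type) while the evaluation-type form dual to $f$ dominates everything (cobounded type); a non-zero fixed point is then a $G$-invariant positive functional, and normalising it at $f$ forces the implication ``$\sum t_i g_i f\ge 0\Rightarrow\sum t_i\ge 0$''. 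The main subtlety is arranging the cone to be genuinely weakly complete and proper while keeping the two ``type'' hypotheses — this is exactly where the constructions of Section~\ref{sec:ATP} should be invoked.

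For \ref{pt:equiv:TP}$\Rightarrow$\ref{pt:equiv:Reiter} and \ref{pt:equiv:Reiter}$\Rightarrow$\ref{pt:equiv:MG}, I expect these to follow the familiar Reiter/Følner pattern: the translate property for $f$ says precisely that $f$ lies outside the closed convex cone generated by the differences $g_i f - t\,h$ with $\sum t=0$ in an appropriate sense, and a Hahn–Banach separation yields, at the finitary level, almost-invariant positive ``test functions'' — the $f$-Reiter property. Averaging these (a weak-$*$ compactness argument in the unit ball of the dual of a separable reduction, or a direct ultralimit) produces an honest $G$-invariant positive linear form on $\ell^\infty(G;f)$ with value $1$ at $f$, i.e.\ an integral normalised for $f$; this is item \ref{pt:equiv:MG}. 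The passage \ref{pt:equiv:MG}$\Rightarrow$\ref{pt:equiv:ATP} should be a matter of unpacking the abstract translate property as stated in Section~\ref{sec:ATP} and recognising that an integral normalised for $f$ restricts to witness it for the order unit $f$; and \ref{pt:equiv:ATP}$\Rightarrow$\ref{pt:equiv:FPC} is the genuinely new input: given a representation of locally bounded cobounded type on a weakly complete proper convex cone $C$, I would let $x\ne 0$ have bounded orbit, let $\fhi$ be the dominating linear form from coboundedness, and transport the problem to the ordered pair $(\Span(Gx),\fhi)$ where the abstract translate property applies; the resulting functional, paired against the weakly complete cone via a Riesz-type or barycentre argument, yields the non-zero fixed point in $C$.

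The step I expect to be the main obstacle is \ref{pt:equiv:ATP}$\Rightarrow$\ref{pt:equiv:FPC}, because it must convert a purely algebraic positivity statement about one linear form into an actual fixed \emph{vector} in a possibly infinite-dimensional cone, and this is where weak completeness of $C$ has to do real work. The natural route is: the abstract translate property gives a $G$-invariant state on an ordered space built from the orbit of $x$; weak completeness of $C$ lets us realise that state as integration against a (finitely additive, then genuine by completeness) measure supported on the weak closure of the orbit, whose barycentre lies in $C$ and is $G$-fixed; one must check the barycentre is non-zero, which is where the normalisation against $\fhi$ and the boundedness of the orbit of $x$ combine. A secondary technical nuisance throughout is reducing to separable or otherwise tractable subspaces so that weak-$*$ compactness and barycentre existence are available — I would isolate this as a lemma (plausibly Proposition~\ref{prop:min}) and use it uniformly. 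Finally, to conclude the whole equivalence one also checks, as promised in the text, that the translate property implies amenability, so that the classical extension results of \cite{GreenleafBook}, \cite{Rosenblatt_PhD} apply and close any remaining gap between a functional on $\Span(Gf)$ and one on all of $\ell^\infty(G;f)$.
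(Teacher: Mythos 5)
Your overall architecture is reasonable and several individual steps do match the paper: the implication \ref{pt:equiv:FPC}$\Rightarrow$\ref{pt:equiv:MG} via the cone of positive linear forms, the averaging of almost-invariant functions into an integral, and the transport $f(g)=\delta(g\inv v)$ between the abstract and concrete translate properties. But there is a genuine gap at the centre of the argument, and it is exactly the point the paper isolates as the main difficulty. The translate property, the abstract translate property, and any direct Hahn--Banach separation argument only produce a $G$-invariant positive functional on the \emph{span} $\Span(Gf)$ of the orbit -- equivalently the $f$-\emph{ratio} property of Definition~\ref{def:ratio}, $\|(su)\cdot f\|_1<(1+\epsilon)\|u\cdot f\|_1$. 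The $f$-Reiter property, the integrals of \ref{pt:equiv:MG}, and the fixed point in the cone all require a functional on the much larger space $\ell^\infty(G;f)$ (respectively, positivity against \emph{all} of $E'$ in the step that places the candidate fixed point inside $C$ by weak completeness). Your claimed implication \ref{pt:equiv:TP}$\Rightarrow$\ref{pt:equiv:Reiter} by separation is therefore false as stated -- separation yields the ratio property, not the Reiter property -- and your \ref{pt:equiv:ATP}$\Rightarrow$\ref{pt:equiv:FPC} has the same defect: the functional produced by the abstract translate property lives on $\Span(G\fhi)\se E'$ and cannot be evaluated on the arbitrary $\lambda\in E'$ needed for the Hahn--Banach/weak-completeness argument.

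The classical extension results of Greenleaf and Rosenblatt that you invoke in your last sentence close this gap only \emph{under the additional hypothesis that $G$ is amenable}, and the whole point -- Greenleaf's 1969 problem, answered here -- is that amenability is not given: it must be \emph{deduced} from the translate property. This is the paper's Theorem~\ref{thm:TP-amen}, proved via Moore's Ramsey-theoretic characterisation of amenability: one shows that the image of $\one_G$ in $\ell^\infty(G)\big/\overline{D}$ has quotient norm one, where $D$ is spanned by the differences $\one_E-\one_{gE}$, because a violation would produce an element $v-\epsilon\one_E\geq 0$ whose coefficients sum to $-\epsilon$, contradicting the translate property for $\one_E$. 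You acknowledge that ``one also checks'' this implication but supply no argument for it; without it your cycle does not close. A minor further point: your setup for \ref{pt:equiv:FPC}$\Rightarrow$\ref{pt:equiv:TP} conflates $\ell^\infty(G;f)$ with its dual; the cone must be taken in $E=\ell^\infty(G;f)^*$ with the weak-* topology, so that the fixed point is a positive linear form and weak completeness comes for free from the completeness of algebraic duals.
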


We now investigate the class of all groups with the fixed-point property for cones, and its stability under basic group constructions. The case of metabelian groups without this property alerts us to the fact that group extensions will not, in general, preserve the fixed-point property for cones. In fact, we do not even know whether the Cartesian product (of two groups) preserves it. The same question is open for supramenable groups and was first asked in the 1970s~\cite[p.~49]{Rosenblatt_PhD}, \cite[p.~51]{Rosenblatt74}.

\smallskip
The closure under quotients is obvious, but for the other hereditary properties it will be helpful to be armed with the various characterisations of Theorem~\ref{thm:equiv}. We find our proof of point~\ref{pt:stable:central} laborious; perhaps the reader will devise a simpler argument.

\begin{thm}\label{thm:stable}
The class of groups with the fixed-point property for cones is closed under:
\begin{enumerate}[label=(\arabic*)]\setcounter{enumi}{-1}
\item quotients,
\item subgroups,\label{pt:stable:sub}
\item directed unions,\label{pt:stable:union}
\item Cartesian products with subexponential groups,\label{pt:stable:SE}
\item central extensions,\label{pt:stable:central}
\item extensions by and of finite groups.
\end{enumerate}
\end{thm}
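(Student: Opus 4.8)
The plan is to work through the list of closure properties in roughly the order given, using the characterisations in Theorem~\ref{thm:equiv} (chiefly the translate property~\ref{pt:equiv:TP}, which is the most combinatorial, and the existence of integrals~\ref{pt:equiv:MG}) rather than arguing directly with cones. Closure under quotients is immediate since a function on a quotient $G/N$ pulls back to an $N$-invariant function on $G$, and any $G$-invariant linear form on $\Span(Gf)$ descends. For~\ref{pt:stable:sub} I would argue: given $H\le G$ and $0\neq f\ge 0$ on $H$, extend $f$ by zero to $G$; the translate inequality $\sum t_i g_i f\ge 0$ on $G$, restricted to a transversal argument, forces $\sum t_i\ge 0$ by grouping the $g_i$ according to cosets $Hg$ and using the translate property of $G$ on each coset-translate of $f$ separately. (Alternatively, and perhaps more cleanly, one restricts a $G$-integral normalised for $\mathrm{Ind}$-type data back to $H$.) Closure under directed unions~\ref{pt:stable:union} follows from~\ref{pt:equiv:TP} by a compactness/finiteness observation: the translate property is a statement about finitely many group elements at a time, so if $G=\bigcup G_\alpha$ and each $G_\alpha$ has the translate property, any purported violation $\sum_{i=1}^n t_i g_i f\ge 0$ with $\sum t_i<0$ already lives inside some $G_\alpha$ — but one must be slightly careful because $f$ itself need not be supported on finitely many $G_\alpha$-orbits; the fix is to observe that the inequality and its conclusion only involve the values of $f$ on the (countably many) translates $g_i^{-1}h f$ that occur, so one passes to the subgroup they generate.

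For~\ref{pt:stable:SE}, the Cartesian product $G\times S$ with $S$ subexponential, I would use that $S$ has the fixed-point property for cones (which is itself Theorem~\ref{thm:stable}\ref{pt:stable:SE} in the degenerate case $G=1$, so this really needs the separate input that subexponential groups are covered — presumably proved earlier via a Følner-type averaging argument). The strategy is a two-step averaging: given $0\neq f\ge 0$ on $G\times S$, first produce, using subexponentiality of $S$, an $S$-invariant "partial integral" in the $S$-variable landing in $\ell^\infty(G;\tilde f)$ for a suitable $\tilde f\ge 0$ on $G$ (this is where the $\lim|S^n|^{1/n}=1$ hypothesis enters, to make the Følner averages converge without killing the normalisation), and then apply the fixed-point property for cones of $G$ to $\tilde f$. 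The delicate point is ensuring the intermediate object is non-zero and genuinely normalised, i.e.\ that the Følner averaging in $S$ does not annihilate $f$; this should follow because subexponential averaging of a nonnegative function cannot send a nonzero function to zero while preserving a normalisation constraint. Closure under extensions by and of finite groups is similar but easier: a finite normal subgroup can be averaged over exactly (finite sum, no convergence issue), and a finite quotient is handled by passing to the finite-index subgroup via~\ref{pt:stable:sub} together with an induction/averaging over the finitely many cosets.

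The hard part will be~\ref{pt:stable:central}, central extensions $1\to Z\to G\to Q\to 1$ with $Z$ central and $Q$ having the fixed-point property for cones — the authors themselves flag this proof as laborious. I would attempt it via the abstract translate property~\ref{pt:equiv:ATP} or directly via integrals~\ref{pt:equiv:MG}: given $0\neq f\ge0$ on $G$, one wants to "integrate out" $Z$ first to land over $Q$. Since $Z$ is central but possibly not subexponential (it is abelian, hence subexponential! — so one might hope to reuse~\ref{pt:stable:SE}), the subtlety is that $G$ is not a direct product $Z\times Q$, only a central extension, so the $Z$-averaging must be performed $Q$-equivariantly, and the cocycle obstruction to splitting must be shown not to interfere. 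Concretely: apply supramenability/subexponential averaging of $Z$ to get a $Z$-invariant positive form, observe that $Z$-invariance plus centrality makes the resulting form on $\ell^\infty(G;f)^Z$ into a $Q$-equivariant object on a $Q$-space of the form $\ell^\infty(Q;\bar f)$, then invoke the fixed-point property of $Q$. The technical nuisances are: (i) $\ell^\infty(G;f)^Z$ need not be of the form $\ell^\infty(Q;\bar f)$ on the nose, since the $Z$-orbit of a point can be infinite and the descended domination data must be tracked carefully; (ii) making the $Z$-averaging well-defined on $\ell^\infty(G;f)$ rather than just on the span requires the amenability of $Z$ to extend the partial integral, so one is really chaining together the equivalences of Theorem~\ref{thm:equiv} twice. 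I expect the bulk of the work — and the reason it is "laborious" — to be bookkeeping the domination relations $\pm\lambda\le\sum g_i\varphi$ through the two-stage reduction, rather than any single conceptual difficulty.
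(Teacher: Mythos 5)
Your treatment of quotients, subgroups, directed unions and the finite (co)extensions is essentially the paper's: extension by zero for subgroups, the finiteness of the data in the translate property for directed unions (the paper handles your ``$f$ lives on all of $G$'' worry by invoking the \emph{abstract} translate property of $G_\alpha$ acting on $\ell^\infty(G)$, which is cleaner than restricting to cosets but amounts to the same thing), and averaging over cosets, respectively over a finite normal subgroup, for item (5). The genuine gaps are in \ref{pt:stable:SE} and \ref{pt:stable:central}. For \ref{pt:stable:SE}, ``integrating out the subexponential factor with a F\o lner/invariant-mean average'' fails exactly at the point you flag and then wave away: an invariant mean on an infinite subexponential group \emph{does} send nonzero nonnegative functions to zero (take $f$ finitely supported), so the intermediate function $\tilde f$ on $G$ can vanish and the normalisation is destroyed. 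What subexponentiality actually buys is Jenkins' pointwise ratio weight (Lemma~\ref{lem:SE}): a nonzero $\varrho\in\ell^1(H)$ with $|\varrho h_i-\varrho|\le\epsilon\varrho$ \emph{pointwise}. One convolves $f$ by $\varrho$ in the $H$-variable --- which cannot kill $f$ since $\varrho>0$ on the relevant subgroup --- obtains $(1-\epsilon)\varrho f\le \varrho h_i f\le(1+\epsilon)\varrho f$, perturbs the coefficients $t_i$ to $\tilde t_i$ still of negative sum, and contradicts the \emph{abstract} translate property of $G$ applied to $\varrho f\in\ell^\infty(G\times H)$. Without this multiplicative pointwise control there is no way to keep the normalisation, and your sketch as written does not close.

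For \ref{pt:stable:central} the proposed route (average over the central, hence abelian, hence subexponential subgroup $Z$, then descend to $Q=G/Z$) is not what the paper does and the obstructions you list are not incidental bookkeeping: after a Jenkins-type convolution over $Z$ the resulting function is only approximately $Z$-invariant for finitely many central elements, so it does \emph{not} lie in $\ell^\infty(G)^Z\cong\ell^\infty(Q)$, and one cannot pass to a limit of such averages without losing the normalisation --- this is the same degeneration as above, now with no finite-data escape because exact $Z$-invariance is needed before the translate property of $Q$ can be invoked. The paper instead argues geometrically: reduce to countable $G$ and, via Propositions~\ref{prop:FPC:den} and~\ref{prop:min}, to a minimal invariant weakly complete cone $C$ whose ambient weak topology is first-countable, so that $C$ is the closed convex hull of its extreme rays; show $(\Id+z)C=C$ by a properness-of-addition argument, so $z$ preserves every extreme ray and acts on it by a scalar $\lambda_x$; then use the closed invariant sub-cones $C_\lambda^{\pm}=\{x: zx\gtrless\lambda x\}$, whose sum is $C$, together with minimality and the boundedness of some orbit to force $\lambda_x=1$ throughout, i.e.\ the centre acts trivially and the problem reduces to $G/Z$. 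None of this is present in your sketch, so item \ref{pt:stable:central} remains unproved.
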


\subsection{Two conical fixed-point theorems}
We now return to the first theme~\ref{pt:RN}, namely fixed-point theorems without assumptions on the group~$G$. Our first fixed-point statement remains in the generality of weakly complete cones $C\se E$ as above but under the assumption that the $G$-representation is \emph{equicontinuous} on $E$. In that case, the local boundedness condition is automatically satisfied (if $C\neq \{0\}$).

In particular, the following contains the Ryll-Nardzewski theorem for isometries of Banach spaces as the special case of a cone $C= K\rtimes \RR_+$.

\begin{thm}\label{thm:RNC}
Let $G$ be a countable group with a representation of cobounded type on a weakly complete convex proper cone $C\neq \{0\}$ in a (Hausdorff) locally convex space $E$.

If the representation is equicontinuous on $E$, then $G$ fixes a non-zero point in $C$.
\end{thm}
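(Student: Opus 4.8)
The plan is to build, from a single bounded orbit, a genuine weakly compact convex subset of $C$, to locate a fixed point there by the equicontinuous form of the Ryll--Nardzewski argument (averaging over an enveloping group), and to call on coboundedness only at the end to keep that fixed point off the origin. To begin, I would normalise the dominating form: since $\fhi$ $G$-dominates $\fhi$ itself, choose $g_1,\dots,g_n\in G$ with $\fhi\le\sum_i g_i\fhi$ and $-\fhi\le\sum_i g_i\fhi$ and replace $\fhi$ by $\psi:=\fhi+\sum_i g_i\fhi$; then $\psi\ge 0$ on $C$ and $2\fhi\le\psi$, and combining the latter with order-preservation and the identity $\lambda=\tfrac12(2\lambda)+\tfrac12\cdot 0$ one checks that $\psi$ still $G$-dominates every $\lambda\in E'$. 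So I may assume $\fhi\ge 0$ on $C$. As the representation is equicontinuous and $C\ne\{0\}$, it is of locally bounded type, so fix $x_0\in C$, $x_0\ne 0$, with $Gx_0$ bounded. A bounded set is totally bounded for $\sigma(E,E')$ -- its image under any finite family of functionals is a bounded, hence totally bounded, subset of a finite-dimensional space -- so $Q:=\overline{Gx_0}^{\,w}$ is weakly totally bounded; being weakly closed in the weakly complete cone $C$ it is also weakly complete, hence weakly compact. The convex hull of a weakly totally bounded set is again such (here local convexity intervenes), so $K:=\overline{\mathrm{conv}}^{\,w}(Q)$ is weakly compact, convex, $G$-invariant and contained in $C$.

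Now $G$ acts on the weakly compact Hausdorff convex set $K$ by affine homeomorphisms, equicontinuously for the unique uniformity of $K$. I would invoke the classical fact that the enveloping (Ellis) semigroup of an equicontinuous flow on a compact space is a compact topological group; writing $\overline G$ for it and $\mu$ for its Haar probability measure, the continuous orbit map $g\mapsto g x_0$ pushes $\mu$ forward to a Radon probability measure on the compact convex set $K$, which possesses a barycenter $p\in K$. Because continuous linear maps commute with barycenters and $\mu$ is left-invariant, $p$ is fixed by $\overline G$, and a fortiori by $G$.

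It remains to show $p\ne 0$, and this is the only place where coboundedness is used. If $p=0$ then $0=\fhi(p)=\int_{\overline G}\fhi(g x_0)\,d\mu(g)$, with non-negative integrand (since $g x_0\in K\se C$ and $\fhi\ge 0$ on $C$) which is continuous in $g$; as $\mu$ has full support, $\fhi(g x_0)=0$ for all $g\in\overline G$, in particular for all $g\in G$. Given any $\lambda\in E'$, pick $g_1,\dots,g_n\in G$ with $\pm\lambda\le\sum_i g_i\fhi$; evaluating this dual-order inequality at $g x_0$ for $g\in G$ yields $|\lambda(g x_0)|\le\sum_i\fhi(g_i^{-1} g x_0)=0$. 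As $E'$ separates points, $g x_0=0$ for every $g\in G$, contradicting $x_0\ne 0$. Hence $p$ is a non-zero $G$-fixed point in $C$.

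The hard part is the middle paragraph: extracting a fixed point from an equicontinuous affine action on a weakly compact convex set -- this is exactly the Ryll--Nardzewski phenomenon, and in the case $C=K\rtimes\RR_+$ it literally reproduces that theorem -- together with the compactness bookkeeping (weak completeness of $C$, totally bounded convex hulls) needed to produce such a set from one orbit. By contrast, the cone and the coboundedness condition play only a soft role, confined to the last paragraph. I would want to double-check the measure-theoretic points in this possibly non-metrizable generality -- existence of the barycenter in $K$, continuity of the Ellis multiplication so that $\overline G$ is a genuine topological group, left-invariance of $\mu$ -- which are standard; this may also be where countability of $G$ is convenient, letting one first replace $E$ by the closed $G$-invariant linear span of $x_0$, which is separable, so that the weak topology on $K$ becomes metrizable and the dynamical facts are entirely classical.
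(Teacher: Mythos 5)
Your first and last paragraphs are sound: the normalisation making $\fhi\geq 0$ on $C$, the construction of the $G$-invariant weakly compact convex set $K\se C$ (a bounded orbit is weakly totally bounded, and weak completeness of $C$ upgrades the closed convex hull to weakly compact), and the positivity argument at the end are all correct. The gap is the first sentence of the middle paragraph: ``equicontinuously for the unique uniformity of $K$''. The hypothesis of the theorem is equicontinuity for the \emph{original} topology of $E$; the unique uniformity of the weakly compact set $K$ is the \emph{weak} uniformity, and equicontinuity does not transfer from one to the other. Indeed, weak-uniform equicontinuity of a family of linear maps forces each dual orbit $\{\lambda\circ g\}$ into a fixed finite-dimensional subspace of $E'$, which essentially never happens. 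Concretely, take $G=\ZZ$ acting by the shift on $E=\ell^2(\ZZ)$, $C$ the positive cone (weakly complete, cf.\ Remark~\ref{rem:cobounded}), $x_0=\delta_0$: your $K$ is weakly compact, the pairs $(\delta_k,0)\in K\times K$ become arbitrarily close in the weak uniformity as $k\to\infty$, yet the shift by $-k$ sends them to $(\delta_0,0)$, at fixed distance; so the action on $(K,\mathrm{weak})$ is not equicontinuous although the representation is unitary, hence equicontinuous on $E$. (Coboundedness fails in this example, but your step claims the implication without using it.) Consequently the Ellis semigroup of $G\curvearrowright K$ need not be a compact topological group of continuous maps, there is no Haar measure, and the barycentre construction collapses. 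This confusion of topologies is precisely the point emphasised in the introduction: if the action on the weakly compact set were equicontinuous for its own uniformity, Kakutani's theorem would already apply and there would be no Ryll--Nardzewski phenomenon to prove.

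Nor can the middle paragraph be repaired by simply citing the classical (distal or equicontinuous) Ryll--Nardzewski theorem for $K$: that yields only \emph{some} fixed point in $K$, and $K$ typically contains $0$ (orbits in cones usually accumulate at $0$, as with $\delta_k\to 0$ above), so the fixed point it produces may be the origin. The entire difficulty is the non-zero conclusion, and your device for avoiding $0$ --- full support of the Haar measure --- is exactly the ingredient that is unavailable. The paper takes a different route: countability and equicontinuity are used to form $\psi=\sum_n 2^{-n}g_n\fhi\in E'$, an order-unit, so that $K_1=C\cap\{\psi=1\}$ is a weakly compact but \emph{non-invariant} base; hence $C$ is weakly locally compact, all orbits are bounded and the action is distal for the topology of $E$, and the non-zero fixed point comes from Theorem~\ref{thm:RNCD}, whose proof runs a Namioka--Asplund-type dentability argument adapted to the non-invariant base, combined with the minimality reduction of Proposition~\ref{prop:min}. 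Your invariant weakly compact $K$ is a correct and potentially useful observation, but it does not substitute for that work.
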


\begin{rem}
The most general form of the Ryll-Nardzewski theorem considers actions that are \emph{distal} on a compact convex set. This is a condition only on this subset of $E$, and moreover it is a weaker condition than equicontinuity (compare~\cite{Glasner87} and references therein). In parallel to this generality, Theorem~\ref{thm:RNCD} below will address actions that are only assumed distal on a cone. At the opposite spectrum of generality, Remark~\ref{rem:Ban} proposes a shorter proof for isometric representations on Banach spaces.
\end{rem}

The reader will have noticed that one inconspicuous assumption on $G$ \emph{did} sneak into Theorem~\ref{thm:RNC}, namely that $G$ be countable. In the classical compact case, this restriction can immediately be lifted by a compactness argument. For the above statement, however, it is needed even in the most familiar case of Hilbert spaces:

\begin{example}\label{ex:L2}
Let $V=L^2([0,1])$ and let $C$ be the convex proper cone of function classes that are a.e.\ non-negative. For the usual scalar product of $V$, the cone $C$ is self-dual. Since moreover $V=C-C$, it follows that $C$ is weakly complete, see e.g.~\cite[Prop.~5]{Becker85}. (As was pointed out to us by de la Harpe, a nice abstract characterisation of this cone was obtained by Sz.-Nagy~\cite{Sz-Nagy50}.)

Let further $G$ be the group of non-singular automorphisms of the measure space $[0,1]$. The usual isometric $G$-representation on $V$ defined a.e.\ by
$$(gf)(x) = \left( \frac{dg \mu}{d\mu}(x)\right)^{\frac12} f(g\inv x) \kern 10mm (g\in G, f\in L^2),$$
where $dg \mu/d\mu \in L^1([0,1])$ denotes the Radon--Nikodym derivative of $g \mu$ with respect to the Lebesgue measure $\mu$, preserves the cone $C$.

This $G$-action has no non-zero fixed point in $V$ but we claim that it is of cobounded type. In fact, we claim that the unit constant function $\one\in C$, viewed as an element of the dual, is $G$-dominating. Let indeed $f\in C$ be arbitrary and define $f_1=f\vee \one \geq f$ to be the maximum of $f$ and $\one$. It suffices to show that for some $g\in G$ we have $g\inv(\|f_1\|  \one) =  f_1$. The assignment $g(x) =\|f_1\| ^{-2}  \int_0^x f_1^2 d\mu$ defines an increasing map $[0,1]\to [0,1]$ which yields a non-singular automorphism since $0<f_1<\infty$. Now $g\inv(\|f_1\|  \one) =  f_1$ holds by construction.
\end{example}

\smallskip
We can nonetheless remove this last hypothesis on $G$ if we change the topological assumption on the cone. In fact, the countability of $G$ is only used in Theorem~\ref{thm:RNC} to establish that $C$ must be weakly locally compact -- this is a consequence of equicontinuity which is in strong contrast to the general cones that we had to deal with for the group-theoretical property.

Once we consider cones that are locally compact in the weak topology, we can dispense of any restriction on $G$ and deal with the distal case. In particular, the following subsumes the general case of the Ryll-Nardzewski theorem as the particular instance of a cone $C= K\rtimes \RR_+$.

\begin{thm}\label{thm:RNCD}
Let $G$ be a group with a representation of locally bounded type on a weakly locally compact convex proper cone $C$ in a (Hausdorff) locally convex space $E$.

If the action is distal on $C$, then $G$ fixes a non-zero point in $C$.
\end{thm}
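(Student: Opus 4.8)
The plan is to adapt the Namioka--Asplund proof strategy for the Ryll-Nardzewski theorem to the conical setting, with the weak local compactness of $C$ replacing the weak compactness of the affine base. First I would dispose of a trivial case: if some nonzero $x \in C$ has bounded orbit whose weak closure meets $\{0\}$ only at $0$... actually, more carefully, I would pick $x \neq 0$ in $C$ with bounded orbit (local boundedness) and consider the weakly closed convex hull $K$ of the orbit $Gx$. Since $C$ is weakly locally compact and $K \subseteq C$ is weakly closed, bounded and convex, one argues that $K$ is weakly compact (using that in a weakly locally compact cone, bounded weakly closed sets are weakly compact --- cf.\ the Bourbaki reference on locally compact cones cited in the excerpt). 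Thus $K$ is a nonempty $G$-invariant weakly compact convex set, and $0$ may or may not lie in $K$.

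The heart of the matter is then a Ryll-Nardzewski-type argument on $K$: using distality of the action on $C$ (hence on $K$) together with a metrizability reduction. The standard approach is to show that every $G$-invariant weakly compact convex subset $L \subseteq K$ contains a $G$-fixed point, by a Zorn's lemma argument reducing to a minimal such $L$ and then deriving that $L$ must be a single point. The key lemma --- this is where distality enters in the classical proof --- is that for a distal action one cannot have a minimal invariant compact convex set with more than one point; the contradiction comes from Namioka's lemma: if $L$ has two distinct points $a,b$, one builds, using the barycenter / averaging over large finite subsets and a category argument on the (necessarily metrizable, after passing to a suitable quotient) compact convex set, a sequence $g_n$ with $g_n a - g_n b \to 0$, contradicting distality. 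I would need to verify that the reduction to the metrizable case still works: $K$ sits in the locally convex space $E$, and I'd pass to the quotient of $E$ by the annihilator of a countable separating family of functionals adapted to a countable subgroup generated as needed --- but here, unlike Theorem~\ref{thm:RNC}, $G$ need not be countable, so one instead runs the classical compactness argument: it suffices to fix each element, and the fixed-point sets $K^g$ are closed and have the finite intersection property once we know every finitely generated subgroup (hence every countable subgroup) fixes a point, and weak compactness of $K$ does the rest. So the countable case is the real content, and there the metrizability reduction is available.

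Once a $G$-fixed point $p \in K$ is obtained, the final step is to ensure $p \neq 0$. This is exactly where coboundedness... no, wait --- Theorem~\ref{thm:RNCD} only assumes locally bounded type, not cobounded type, so the nonvanishing must come from distality itself. The point is that if the only fixed point produced were $0$, then $K$ would be a weakly compact convex set on which $G$ acts distally with $0$ as its unique fixed point; but a minimal invariant weakly compact convex subset $L$ of $K$ not containing... here one uses that $0$ is an extreme point of the cone $C$ (as $C$ is proper) together with distality: if $Gx$ accumulates at $0$, pick a minimal invariant compact convex $L \subseteq K$; it is a single point $p$ by the distality argument above, and $p \in \overline{Gx} \subseteq C$. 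If $p = 0$ then $0 \in \overline{Gx}$, but then consider the net $g_\alpha x \to 0$; distality is a statement about pairs, so I'd instead argue: the minimal set $L$, being a single point $p$ with $Gp = \{p\}$, and $p \in C$; were $p = 0$, minimality of $L = \{0\}$ inside $K$ would be automatic and gives no contradiction --- so this needs more care. The correct fix: replace $K$ by $K \setminus \{0\}$ is not convex, so instead intersect $K$ with a suitable weakly closed half-space or use that the convex hull of $Gx$ with $x \neq 0$ extreme-ish...

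I expect the main obstacle to be precisely this last point --- guaranteeing the fixed point is nonzero using only local boundedness and distality. The resolution should run as follows: let $x \neq 0$ have bounded orbit, and let $K$ be the weakly closed convex hull of $Gx$. If $0 \notin \overline{Gx}$, then since the action is linear, no nonzero scalar multiple issue arises and a Hahn--Banach separation gives a closed half-space $H$ with $Gx \subseteq H$ and $0 \notin H$; then $K \cap H$... is not invariant. Alternatively, and more robustly: run the minimal-set argument to get a minimal invariant weakly compact convex $L \subseteq K$, conclude $L = \{p\}$ by distality; then $p \in C$, and $p = 0$ is impossible because $\{0\}$ cannot be the minimal invariant set inside $K$ unless $K = \{0\}$ --- indeed if $0 \in K$ and $K \neq \{0\}$ then $\{0\}$ is a proper invariant weakly compact convex subset, so minimality of a set containing structure forces... one takes $L$ minimal; $L = \{0\}$ is allowed, which is the problem. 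So the genuine fix is: among all nonzero $y \in C$ with bounded orbit, the weakly closed convex hulls $K_y$ are weakly compact; take one and look at its extreme points --- by Krein--Milman $K$ has an extreme point $e \neq 0$ (since $K \neq \{0\}$ and $0$ is extreme in $C$ hence at most one extreme point of $K$ equals $0$, but $K$ has at least... if $K = \{0, \text{stuff}\}$ it has extreme points other than $0$). The orbit of an extreme point consists of extreme points, and one applies a Ryll-Nardzewski argument to the weak closure of the set of extreme points. I would carry this out by the cleanest available route, most likely: establish first that the cone contains a nonzero fixed point by the distal minimal-set argument applied not to $K$ but to a translate or to $K$ after verifying $0 \notin \overline{Gx}$ can be arranged (replacing $x$ by a point of a minimal set, which is a single point $p \neq 0$ precisely when $0 \notin$ that minimal set, and $0 \notin$ it when... ). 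Concretely I anticipate needing the sublemma: \emph{a distal flow on a weakly compact convex set with $0$ in the ambient proper cone either equals $\{0\}$ or admits a minimal invariant compact convex subset avoiding $0$}, proved by noting the action is invertible so $0 \in \overline{Gx}$ with $x$ in a minimal set would make $\{g x\}$ and $\{0\}$ proximal, contradicting distality applied to the pair $(x, 0)$ --- and distality \emph{does} apply to this pair since both lie in $C$ and the action is distal on all of $C$. That is the key realization: distality is assumed on $C$, not just on $K$, so the pair $(x,0)$ with $x \neq 0$ has $\inf_\alpha |g_\alpha x - g_\alpha 0| = \inf_\alpha |g_\alpha x| > 0$ in every continuous seminorm sense, i.e.\ $0 \notin \overline{Gx}$ automatically. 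Hence $K = \overline{\mathrm{conv}}^{w}(Gx)$ satisfies $0 \notin$ the minimal invariant subset $L \subseteq K$ (as $L \subseteq \overline{Gx}$ when $L$ is a single point --- one shows the minimal point lies in $\overline{Gx}$), and $L = \{p\}$ with $p \neq 0$ is the desired fixed point. With that obstacle cleared, the remaining steps are routine adaptations of Namioka's proof of Ryll-Nardzewski.
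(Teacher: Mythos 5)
Your route is genuinely different from the paper's: you reduce to the classical general Ryll--Nardzewski theorem applied to the weakly compact convex set $K=\overline{\mathrm{conv}}^{\,w}(Gx)$, whereas the paper never invokes the classical theorem as a black box -- it re-runs the Asplund--Namioka denting argument ``projectively'', normalising orbit points onto a \emph{non-invariant} compact base $K_1$ and using a minimality reduction (Proposition~\ref{prop:min}) plus a quantitative distality lemma for pairs $g(sx-tz)$ with $s,t\geq\epsilon$. Your reduction, if it works, is shorter; but you have correctly identified that everything hinges on showing the fixed point is non-zero, and it is exactly there that your argument breaks. Two of your claims are unjustified or false: (a) the minimal invariant weakly compact convex subset $L\subseteq K$ need \emph{not} lie in $\overline{Gx}$ (already for $\ZZ/2$ flipping two points, the fixed point is the midpoint, not in the orbit closure), so ``$0\notin\overline{Gx}$ hence $0\notin L$'' does not follow; and (b) distality is with respect to the \emph{original} topology of $E$, so the pair $(x,0)$ only gives $0\notin\overline{Gx}$ in that topology, while $K$ is a \emph{weak} closed convex hull -- a priori $0$ could still lie in $K$ (orbits could tend to $0$ weakly, or convex combinations could), and then Zorn's lemma may hand you $L=\{0\}$.

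The gap is fillable, but it needs the one ingredient you never actually use: the Dieudonn\'e--Klee weakly compact base $K_1=C\cap\psi^{-1}(1)$ with $\psi\in E'$ strictly positive on $C\setminus\{0\}$. Since $K_1$ is weakly compact it is bounded (Mackey), so if $\psi(g_\alpha x)\to 0$ along some net then $g_\alpha x=\psi(g_\alpha x)\cdot\bigl(g_\alpha x/\psi(g_\alpha x)\bigr)$ is a null sequence of scalars times elements of the bounded set $K_1$, hence $g_\alpha x\to 0$ in the \emph{original} topology -- contradicting distality of the pair $(x,0)$. Therefore $m:=\inf_G\psi(Gx)>0$, whence $\psi\geq m$ on all of $K$ and $0\notin K$; now the classical Ryll--Nardzewski theorem on $K$ (affine weakly continuous maps, distal in the original topology, $K$ weakly compact convex invariant) produces a fixed point which is automatically non-zero. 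With that lemma inserted, your reduction is a valid and arguably more economical proof; without it, the argument as written does not rule out that the only fixed point produced is the apex of the cone. Note also that your finite-intersection-property reduction to countable subgroups must be run with the sets $K_1\cap C^H$ (non-zero fixed rays), as the paper does, not with the sets $K^g$ alone, for the same reason: you must keep track of non-vanishing throughout.
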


Notice that now the coboundedness assumption has become redundant for this statement, but the local boundedness assumption came back.

For Theorem~\ref{thm:RNC}, the coboundedness assumption was needed even in the case of isometric actions on Banach spaces: this is illustrated by the example of $\ell^2(G)$ given in Remark~\ref{rem:cobounded}.

\bigskip\noindent
\textbf{Acknowledgements.} I am grateful to Taka Ozawa and to the anonymous referee for helpful remarks on preliminary versions of this article.

\setcounter{tocdepth}{1}
\tableofcontents
%
\section{Proof of the conical fixed point theorems}
We begin with the proof of Theorem~\ref{thm:RNCD} and retain its notation. One input for the proof will be established later (Proposition~\ref{prop:min}) because we shall need it again in a more general setting.

To get started, here is a consequence of the distality of the $G$-action on $C$.

\begin{lem}\label{lem:dist}
Let $\epsilon > 0$ and pick any $x,z\in C$ not contained in a common ray. Suppose that both orbits $Gx$, $Gz$ are bounded and that the $G$-action on $C$ is distal. Then there is a neighbourhood $U$ of the origin in $E$ such that for all $g\in G$ and all $s,t\geq \epsilon$ we have $g(s x - t z) \notin U$.
\end{lem}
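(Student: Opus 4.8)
\emph{Proposal for the proof.}

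The plan is to prove the equivalent assertion that the origin does not belong to the closure of
\[
S \ :=\ \big\{\, g(sx - tz) \ :\ g\in G,\ s,t\geq\epsilon \,\big\}\ \se\ E,
\]
for then $U:=E\setminus\overline S$ is a neighbourhood of $0$ with the required property. Observe first that, since $x$ and $z$ are not contained in a common ray, both are non-zero and, more generally, $x\neq rz$ and $z\neq rx$ for every $r\geq 0$; in particular $sx\neq tz$ whenever $s,t>0$.

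Assume for contradiction that $0\in\overline S$, and fix a net with $g_\alpha(s_\alpha x - t_\alpha z)\lra 0$, $s_\alpha,t_\alpha\geq\epsilon$. Set $m_\alpha:=\max(s_\alpha,t_\alpha)\geq\epsilon$, $\sigma_\alpha:=s_\alpha/m_\alpha$ and $\tau_\alpha:=t_\alpha/m_\alpha$, so that $\sigma_\alpha,\tau_\alpha\in(0,1]$ with $\max(\sigma_\alpha,\tau_\alpha)=1$. Since in a topological vector space a null net remains null after multiplication by a bounded net of scalars, and $1/m_\alpha\in(0,1/\epsilon]$, we get $g_\alpha(\sigma_\alpha x - \tau_\alpha z)\lra 0$. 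The index set being directed, one of the two conditions ``$\sigma_\alpha=1$'', ``$\tau_\alpha=1$'' holds cofinally; passing to the corresponding subnet and then to a further subnet along which $\sigma_\alpha\to\sigma$ and $\tau_\alpha\to\tau$ in $[0,1]$, we may assume --- by the symmetry $x\leftrightarrow z$, $\sigma\leftrightarrow\tau$, which only flips an overall sign and is irrelevant for convergence to $0$ --- that $\sigma_\alpha\equiv 1$, hence $\sigma=1$. Because the orbit $Gz$ is bounded and $\tau_\alpha\to\tau$, we have $(\tau_\alpha-\tau)\,g_\alpha z\lra 0$, so that
\[
g_\alpha\big(x - \tau z\big)\ \lra\ 0 .
\]

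Now $x$ and $\tau z$ are distinct points of $C$ (by the opening remark, as $\tau\geq 0$), and their orbits $Gx$, $Gz$ are bounded. The convergence $g_\alpha(x-\tau z)\to 0$ therefore contradicts the distality of the $G$-action on $C$: the net $g_\alpha$ drives the pair $(x,\tau z)$ arbitrarily close to the diagonal of $C\times C$ --- using, where needed, that the bounded orbits of $x$ and $z$ have compact closure (as holds for the weakly locally compact cones to which this lemma is applied) to extract a common limit point of $g_\alpha x$ and $\tau\,g_\alpha z$. This establishes $0\notin\overline S$ and proves the lemma.

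The point requiring care is the uniformity demanded by the statement: a single neighbourhood $U$ must work for \emph{all} $g\in G$ and \emph{all} $s,t\geq\epsilon$ at once, whereas distality only furnishes, a priori, a separate non-accumulation statement for each fixed difference of distinct points of $C$. The device that resolves this is the normalisation by $m_\alpha=\max(s_\alpha,t_\alpha)$, which confines the coefficients to the compact square $[0,1]^2$ and lets the boundedness of the orbits of $x$ and $z$ transport the relation to the single limiting pair $(x,\tau z)$; here the hypothesis that $x,z$ lie on no common ray is used twice, to exclude $x=0$ or $z=0$ and to guarantee $x\neq\tau z$.
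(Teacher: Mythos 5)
Your argument is correct and follows essentially the same route as the paper: normalise the coefficients by the larger of $s_\alpha,t_\alpha$, pass to a subnet where the ratio converges to some $\tau\in[0,1]$, and use boundedness of the orbit of $z$ to conclude $g_\alpha(x-\tau z)\to 0$, which contradicts distality since $x\neq \tau z$. The only superfluous step is the appeal to compactness of orbit closures at the end: distality of the action on $C$ is invoked here precisely in the form ``$g_\alpha(u-v)\not\to 0$ for distinct $u,v\in C$'', so no extraction of a common limit point of $g_\alpha x$ and $\tau g_\alpha z$ is needed.
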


\begin{proof}
Suppose for a contradiction that the conclusion does not hold. Then there are nets of elements $g_\alpha\in G$ and $s_\alpha, t_\alpha \geq \epsilon$ (with index $\alpha$ in some directed set) such that $g_\alpha (s_\alpha x - t_\alpha z)$ converges to zero. Upon replacing them by subnets and possibly exchanging $x$ with $z$, we can assume that $s_\alpha \geq t_\alpha$ holds for all $\alpha$. Appealing to a further subnet we can assume that $t_\alpha/s_\alpha$ converges to some $\tau\in[0,1]$. We shall contradict the distality assumption by showing that $g_\alpha (x - \tau z)$ converges to zero, recalling that $x\neq \tau z$. Indeed, write
$$g_\alpha (x - \tau z) \ =\ g_\alpha (x - (t_\alpha/s_\alpha) z) + ( t_\alpha/s_\alpha  - \tau) g_\alpha z.$$
The first term is $s_\alpha\inv g_\alpha (s_\alpha x - t_\alpha z)$, which still converges to zero since $s_\alpha\inv$ is bounded. The second term converges to zero because $g_\alpha z$ is bounded in $E$.
\end{proof}

Since $C$ is weakly locally compact and proper, it admits a weakly compact base $K_1$, see Ex.~21 in~\cite[II \S\,7]{BourbakiEVT81} (this result is due to Dieudonn\'e and Klee~\cite[2.4]{Klee55c}). \itshape We emphasise that this base need not be invariant\upshape. This means that we are not in the classical setting, but rather in a ``projective'' context; the distance between these two settings can be appreciated e.g.\ in light of the \emph{Tychonoff property} recalled in Section~\ref{sec:other}.  Nevertheless, we will be able to use suitably adapted extensions of some of the known arguments in proofs of the Ryll-Nardzewski theorem.

\medskip
Applying  a compactness argument to the intersections of $K_1$ with sub-cones of fixed points in $C$, we see that it suffices to find a non-zero fixed point for each countable subgroup of $G$; we can thus assume that $G$ is countable.

We claim that we can furthermore suppose that $C$ is minimal amongst all those invariant closed convex sub-cones of $C$ that contain a non-zero bounded orbit. This is more delicate since the set of bounded orbits is a priori not closed, but we establish this fact in the more general weakly complete setting in Proposition~\ref{prop:min} below. We point out that here $C$ is trivially of cobounded type as assumed in  Proposition~\ref{prop:min} because it admits a base. Notice that in particular we have reduced to the case where $C$, and hence also $K_1$, is separable.

\medskip
We now proceed to show that $K_1$ contains only one point whose orbit in $C$ is bounded. Since the corresponding ray will then be $G$-invariant by uniqueness, it will in fact be point-wise fixed by distality. Therefore, this will then finish the proof.

\medskip
Suppose for a contradiction that $K_1$ contains distinct points $x,y$ with bounded orbits and denote their midpoint by $z$. In particular the orbit of $z$ is also bounded. Let $\psi\in E'$ be the linear form corresponding to the compact base $K_1$, see~\cite[II \S\,7 No.\,3]{BourbakiEVT81}. By boundedness of the chosen orbits, there is $\epsilon >0$ such that $\psi(g x)$, $\psi(g y)$ and $\psi(g z)$ are bounded by $1/\epsilon$ for all $g\in G$. We apply Lemma~\ref{lem:dist} to both pairs $x,z$ and $y,z$ with this $\epsilon$. Since $E$ is locally convex, the lemma implies that there exists a continuous seminorm $q$ on $E$ and $\delta>0$ such that
$$q\big(g(s x - t z)\big)\geq \delta, \kern5mm q\big(g(s y - t z)\big)\geq \delta\kern10mm\forall\,g\in G,\ \forall\,s,t\geq \epsilon.$$
We consider the convex weakly compact set $K_0=K_1-u\se\ker(\psi)$ corresponding to $K_1$ under translation by some $u\in K_1$. Applying to $K_0$ a denting theorem such as the lemma p.~443 in~\cite{Asplund-Namioka}, we find a continuous linear form $\lambda_0\in \ker(\psi)'$ and a constant $\alpha\in\RR$ such that the set $U_0=\{c\in K_0: \lambda_0(c) < \alpha \}$ is non-empty but of $q$-diameter~$<\delta$. Denote by $U_1\se K_1$ the corresponding set $U_1=U_0+u$, which has the same $q$-diameter.

\medskip
We choose a pre-image $\lambda_1\in E'$ under the map $E'\to E'/ \RR\psi \cong \ker(\psi)'$ and define $\lambda\in E'$ by $\lambda=\lambda_1 -(\lambda_1(u)+\alpha) \psi$. Notice that we have
$$U_1 = K_1 \cap \{c\in C : \lambda(c) < 0\}.$$
In particular, by minimality of $C$ in the class of closed invariant sub-cones containing bounded orbits, the bounded orbit $Gz$ cannot remain in the closed convex sub-cone $\{c\in C : \lambda(c) \geq 0\}$ of $C$. We can thus choose $g\in G$ with $\lambda (g z) <0$. At least one of $\lambda (g x)$ or $\lambda (g y)$ must also be negative; we assume $\lambda (g x)<0$. We write $s=1/\psi(gx)$ and $t=1/\psi(gz)$, recalling that $\psi$ does not vanish on $C\setminus\{0\}$. Now $sg x=g sx$ belongs to $U_1$ and likewise for $g tz$. This implies $q(g (sx-tz))<\delta$, which is a contradiction since our choice guarantees $s,t\geq \epsilon$. This completes the proof of Theorem~\ref{thm:RNCD}.\qed

\bigskip
We now turn to Theorem~\ref{thm:RNC}. Consider an enumeration $\{g_n\}_{n=1}^\infty$ of $G$ and let $\fhi\in E'$ be $G$-dominating for $E'$. We endow $E'$ with the weak-* topology induced by $E$. By our assumption on the action, the set $\{g_n \fhi\}$ is equicontinuous (on $E$); we use throughout that equicontinuity implies in fact uniform equicontinuity for families of linear maps (Prop.~5 in~\cite[X \S\,2 No.\,2]{BourbakiTGII_alt}). Therefore, this set is relatively compact in $E'$, see Cor.~2 in~\cite[III \S\,3 No.\,4]{BourbakiEVT81}). The same holds for the convex balanced hull of $\{g_n \fhi\}$, since the convex balanced hull of an equicontinuous set is equicontinuous. Note that we argue by equicontinuity since, in general, the closed convex hull of a compact set need not be compact (in our case it would be true if $E$ was supposed \emph{barrelled}, but not in general). It follows that we can define an element $\psi\in E'$ by $\sum_{n=1}^\infty 2^{-n} g_n\fhi$ since the sequence of partial sums is weak-* Cauchy in this convex balanced hull.

\smallskip
Since $\fhi$ is $G$-dominating, $\psi$ is an order-unit for $E'$, i.e.\ for any $\lambda\in E'$ there is $t\in \RR_+$ with $\lambda \leq t \psi$. Thus, $\psi$ is strictly positive on $C\setminus \{0\}$; we define $K_1$ to be the intersection of $C$ with the affine hyperplane $\psi\equiv 1$.

\medskip
We claim that $K_1$ is weakly compact. Since $\psi$ is an order-unit, every $\lambda\in E'$ is bounded on $K_1$. Thus $K_1$ is weakly bounded. It is also weakly complete since $C$ is so, and therefore weak compactness follows, see e.g.~\cite[23.11]{Choquet_LAII}.

\medskip
At this point $C$ has been shown to have a weakly compact base $K_1$, so that it is locally compact in the weak topology. Since the representation is equicontinuous, all orbits are bounded and the action is distal. In other words, we can apply Theorem~\ref{thm:RNCD} to finish the proof of Theorem~\ref{thm:RNC}.\qed

\begin{rem}\label{rem:Ban}
In the special case where $E$ is a Banach space (with isometric representation), we can instead reduce ourselves to a double application of the Ryll-Nardzewski theorem thanks to Davis--Figiel--Johnson--Pe{\l}czy{\'n}ski renorming~\cite{Davis-Figiel-Johnson-Pelczynski}. Indeed, the intersection $L$ of $C$ with the unit ball $E_1$ of $E$ is weakly compact because it is weakly complete and bounded; therefore, by the Krein--Smulian theorem, so is the closed convex hull $W$ of $L\cup (-L)$, which is now symmetric. We recall the Davis--Figiel--Johnson--Pe{\l}czy{\'n}ski norm $\vertiii{\cdot}$ associated to $W$. Let $\|\cdot\|_n$ be the gauge norm associated to the set $2^n W+ 2^{-n} E_1$ and define the (possibly infinite) norm $\vertiii{\cdot}=(\sum_{n=1}^\infty \|\cdot\|_n^2)^{1/2}$. Let $F\se E$ be the set of $x$ with $\vertiii{x}$ finite and endow $F$ with the norm $\vertiii{\cdot}$. Then Lemma~1 of~\cite{Davis-Figiel-Johnson-Pelczynski} states that $F$ is a reflexive Banach space. By construction, the inclusion map $F\to E$ is continuous and $F$ contains $W$ in its unit ball; in particular, $C\se F$.

We observe that $G$ preserves $F$ and acts isometrically for its new norm since the sets $2^n W+ 2^{-n} E_1$ are invariant. Moreover, it is shown in~\cite[Thm.~1]{Becker89} that $C$ is also weakly complete for the weak uniform structure of $F$. A weak compactness argument in the reflexive space $F$ shows that $\psi|_{F}$ is still an order-unit for $F'$ and a Baire argument in $F'$ implies that $\psi|_F$ is actually an upper bound for a small ball in $F'$. This implies that the closed convex hull of $G \psi|_F$ does not contain zero. By reflexivity, we can apply Ryll-Nardzewski in $F'$ and find a non-zero positive fixed point in $F'$. This provides an \emph{invariant} base for $C$ and a second application of Ryll-Nardzewski to this base yields a fixed point in $C$.
\end{rem}

\section{Integrals and cones}\label{sec:MG}
We begin the proof of Theorem~\ref{thm:equiv} by showing that the fixed-point property for cones is, as expected, sufficiently general to be applied to cones of integrals in the sense of Definition~\ref{def:int}. In other words, we establish~\ref{pt:equiv:FPC}$\Rightarrow$\ref{pt:equiv:MG}:

\bigskip
Let $f\geq 0$ be a non-zero bounded function on $G$. Consider the algebraic dual $E=\ell^\infty(G;f)^*$ endowed with the weak-* topology defined by $\ell^\infty(G;f)$. The convex cone $C\se E$ of positive linear forms is proper since $\ell^\infty(G;f)$ is spanned by positive elements. Moreover, $C$ is weakly complete  (i.e.\ complete) in this topology (compare~\cite{Choquet62a} or recall that algebraic duals are weak-* complete~\cite[II \S\,6 No.\,7]{BourbakiEVT81}).

To justify that the $G$-action is of locally bounded type, consider an element $g\in G$ with $f(g)\neq 0$. The evaluation at $g$ is a non-zero element $x\in C$ and we claim that its $G$-orbit is bounded in $E$. Indeed, a neighbourhood basis at $0$ in $E$ is given by sets $U=\{y: |y(f_i)| < \epsilon\ \forall i\}$ with $f_1, \ldots, f_n\in \ell^\infty(G;f)$ and $\epsilon >0$; we have, as required, $G x \se t U$ for all large enough scalars $t$, namely $t > \max_i \|f_i\|_\infty / \epsilon$.

For the cobounded type condition, observe that the topological dual of $E$ is $\ell^\infty(G;f)$ itself~\cite[3.14]{RudinFA}, endowed with the finest locally convex topology~\cite[p.~56]{Schaefer}, \cite[II \S\,6 No.\,1 Rem\,1)]{BourbakiEVT81}. Therefore the condition holds since by definition $\ell^\infty(G;f)$ is $G$-dominated by $f$.

At this point we apply the fixed-point property and obtain a positive element $\mu\neq 0$ in $C$. Upon renormalising, it remains only to show that $\mu(f)\neq 0$. This follows from the fact that $f$ is $G$-dominating  and $\mu$ is positive.\qed

\bigskip
We now prove the converse implication. Let $C$ be a weakly complete proper convex cone in a (locally convex, Hausdorff) topological vector space $E$. Suppose that we are given a $G$-representation of locally bounded cobounded type. We can therefore choose a non-zero point $x\in C$ with bounded $G$-orbit and a $G$-dominating element $\fhi\in E'$. To every $\lambda\in E'$ we associate a function $\hat\lambda$ on $G$ by $\hat\lambda(g) = \lambda(g x)$. This defines a linear positive map $\lambda\mapsto \hat\lambda$ from $E'$ to the space of functions on $G$ with the pointwise order. Notice that this map is $G$-equivariant for the usual $G$-actions by pre-composition; therefore its image is $G$-dominated by $\hat\fhi$. Moreover, since the orbit $Gx$ is bounded, $\hat\fhi$ is a bounded function. In conclusion, we have obtained a positive linear $G$-equivariant map from $E'$ to $\ell^\infty(G; \hat\fhi)$.

Let now $\mu$ be an integral on $G$ normalised for $\hat\fhi$. Composing the above map $\lambda\mapsto \hat\lambda$ with $\mu$, we obtain a $G$-invariant positive linear map $\check\mu$ on $E'$ with $\check\mu(\fhi)=\mu(\hat\fhi)=1$. A priori $\check\mu$ is an element of the algebraic dual ${E'}^*$ of $E'$. Suppose for a contradiction that $\check\mu$ is not in $C$ under the canonical embedding $C \se {E'}^*$. By weak completeness, $C$ is closed in ${E'}^*$ for the weak-* topology induced by $E'$. Therefore, the Hahn--Banach theorem implies that there is a continuous linear form $\lambda$ on ${E'}^*$ with $\lambda|_C \geq \alpha$ and $\lambda(\check\mu)< \alpha$ for some $\alpha\in \RR$. Since $C$ is a cone, we can suppose $\alpha = 0$. Moreover, by~\cite[3.14]{RudinFA}, we have $\lambda\in E'$ with $\lambda(\check\mu)=\check\mu(\lambda)$. Now $\lambda \geq 0$ implies that $\lambda(\check\mu)=\mu(\hat\lambda)$ must be positive since $\mu$ is positive and $\hat\lambda\geq 0$; this contradiction shows that $C$ contains indeed a non-zero fixed point, namely $\check\mu$.\qed

\begin{scholium}\label{sch:weaker:cb}
The above proof of \ref{pt:equiv:MG}$\Rightarrow$\ref{pt:equiv:FPC} for Theorem~\ref{thm:equiv} holds unchanged if we replace the assumption of cobounded type by the following weaker assumption on~$\fhi$: there exists some non-zero point $x\in C$ with bounded $G$-orbit such that $\fhi$ is $G$-dominating for the weaker pre-order on $E'$ given by the evaluation on the orbit $Gx$ (instead of the evaluation on all of $C$, which defines the dual pre-order). This minor extension of the fixed-point property for cones also follows immediately from Proposition~\ref{prop:min} below.
\end{scholium}

\section{Translate properties}\label{sec:ATP}
As we recalled in the introduction, the translate property for a bounded non-zero function $f\geq 0$ on $G$ is equivalent to the existence of an invariant integral on $\Span(G f)$. However, the more important question would be whether there is an invariant integral on the larger space $\ell^\infty(G; f)$. For instance, when $f=\one_A$ is the characteristic function of a subset $A\se G$, this stronger property is the one that is equivalent to the existence of an invariant finitely additive measure on $G$ normalised for $A$.

Unfortunately, to go from the translate property to this stronger one, it seems that it is necessary to know a priori that $G$ is amenable, see Problem p.~18 in~\cite{GreenleafBook}.

All this is exposed in~\cite[\S\,1]{GreenleafBook} and in~\cite{Rosenblatt74}, but perhaps a small measure of confusion has persisted. (For instance, the amenability assumption is \emph{not} used in Proposition~1.3.1 of~\cite{GreenleafBook}, though it is crucial in Theorem~1.3.2; in contrast, the amenability assumption is missing in Theorem~6.3.1 of~\cite{ShalomQI} for (2)$\Rightarrow$(3); finally, Corollary~1.2 of~\cite{Rosenblatt74} is \emph{not} a consequence, but rather a prerequisite, of the result preceding it.) We shall use notably a result of Moore~\cite{Moore_ramsey} to clarify the situation in Theorem~\ref{thm:TP-amen} below.

\medskip
We begin with a generalisation of the translate property.

\begin{defn}\label{def:ATP}
The group $G$ has the \textbf{abstract translate property} if the following holds.

For every $G$-representation on an ordered vector space $(V, \leq)$ and every $v\geq 0$ in $V$ for which there is a positive linear form $\delta\colon V\to\RR$ with $\delta(v) \neq 0$ which is bounded on the orbit $G v$, we have:
%
$$\sum_{i=1}^n  t_i g_i v \geq 0 \ \Longrightarrow\  \sum_{i=1}^n  t_i \geq 0\kern10mm(\forall t_i \in \RR, g_i\in G).$$
%
\end{defn}

Let us verify that the translate property is indeed a special case of the abstract translate property with $V=\ell^\infty(G)$. We just need to check that for any non-zero bounded function $f\geq 0$ on $G$, there is a positive linear form $\delta$ on $\ell^\infty(G)$ with $\delta(f) \neq 0$ and which is bounded on the orbit $G f$. Since $f\neq 0$, we can choose $g\in G$ with $f(g)\neq 0$. Then the evaluation at $g$ is a linear form $\delta$ with the desired properties.

The same verification shows also that the abstract translate property contains the case of the translate properties for $G$-sets considered by Rosenblatt~\cite{Rosenblatt_PhD} in the context of von Neumann's questions on finitely additive measures.

\begin{lem}\label{lem:TP-ATP}
The abstract translate property is equivalent to the translate property.
\end{lem}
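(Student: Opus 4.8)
The plan is to prove the two implications separately. The direction ``abstract translate property $\Rightarrow$ translate property'' is immediate: as the excerpt already observes right before the lemma, for a non-zero bounded $f\geq 0$ on $G$ one can pick $g\in G$ with $f(g)\neq 0$ and take $\delta$ to be evaluation at $g$, which is a positive linear form on $V=\ell^\infty(G)$, is bounded on $Gf$ (since $f$ is bounded), and satisfies $\delta(f)=f(g)\neq 0$. So the hypotheses of the abstract translate property are met with $V=\ell^\infty(G)$ and $v=f$, and its conclusion is exactly the translate property for $f$. Since $f$ was arbitrary, $G$ has the translate property.

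The substantive direction is ``translate property $\Rightarrow$ abstract translate property''. Here I would start from a $G$-representation on an ordered vector space $(V,\leq)$, an element $v\geq 0$, and a positive linear form $\delta\colon V\to\RR$ with $\delta(v)\neq 0$ bounded on $Gv$; the goal is the implication $\sum_i t_i g_i v\geq 0 \Rightarrow \sum_i t_i\geq 0$. The idea is to transport the situation to $G$ itself. Normalising so that $\delta(v)=1$ and rescaling $\delta$ if needed, define, for each $u\in V$, a function $\hat u$ on $G$ by $\hat u(g)=\delta(g\inv u)$ — or, to match the equivariance conventions used elsewhere in the paper, $\hat u(g)=\delta(gu)$, choosing whichever makes the map $u\mapsto\hat u$ into a $G$-equivariant positive linear map $V\to\ell^\infty(G)$ (equivariant for the right/left regular action as appropriate). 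Positivity is clear since $\delta\geq 0$ and the $G$-action is order-preserving; the image lands in $\ell^\infty(G)$ precisely because $\delta$ is bounded on the orbit $Gv$ together with the fact that we only ever evaluate on translates living in the $G$-invariant order-ideal generated by $v$ — more simply, it suffices to apply the map to $v$ and its translates. Set $f=\hat v$; then $f\geq 0$ is a bounded function on $G$, and $f\neq 0$ because $f(e)=\delta(v)=1$ (or $f(g)=\delta(gv)$ is non-zero for suitable $g$, using that $\delta(v)\neq 0$).

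Now suppose $\sum_{i=1}^n t_i g_i v\geq 0$ in $V$. Applying the positive $G$-equivariant map $u\mapsto\hat u$ gives $\sum_i t_i g_i f\geq 0$ in $\ell^\infty(G)$, where $g_i$ now acts by translation on functions. By the translate property for the function $f$ (which holds since $G$ has the translate property), we conclude $\sum_i t_i\geq 0$. As the $G$-representation, $v$ and $\delta$ were arbitrary, this establishes the abstract translate property, completing the proof.

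The only place requiring care — and the step I would flag as the main obstacle, though it is minor — is getting the equivariance conventions and the direction of the $G$-action exactly right so that $u\mapsto\hat u$ is genuinely $G$-equivariant and lands in $\ell^\infty(G)$: one must check that $\widehat{g_i v}$ really equals $g_i f$ for the translation action appearing in Definition~\ref{def:TP}, and that boundedness of $\delta$ on $Gv$ is enough to control $\hat{v}$ on all of $G$ (it is, since $\hat v(g)$ is, up to the action convention, $\delta$ evaluated at a point of $Gv$). Everything else is routine linearity and positivity.
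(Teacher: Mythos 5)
Your proposal is correct and follows essentially the same route as the paper: the easy direction is the evaluation-functional observation already made before the lemma, and the substantive direction transports the data to $\ell^\infty(G)$ via $f(g)=\delta(g\inv v)$, which is exactly the paper's construction (the convention $\delta(g\inv u)$ is the one that makes $\widehat{g_i v}=g_i f$ for left translation, and boundedness of $\delta$ on $Gv$ is indeed all that is needed since only translates of $v$ are ever evaluated). No gaps.
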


\begin{proof}
We need to deduce the abstract translate property from the translate property. Let $V$, $v$ and $\delta$ be as in Definition~\ref{def:ATP}. Choose any $t_1, \ldots, t_n\in \RR$ and $g_1, \ldots g_n\in G$ with $\sum_{i=1}^n  t_i g_i v \geq 0$. Define a function $f$ on $G$ by $f(g)=\delta(g\inv v)$; observe that this definition is dual to the one made in the proof of~\ref{pt:equiv:MG}$\Rightarrow$\ref{pt:equiv:FPC} in Section~\ref{sec:MG}. Then $f$ is non-zero, bounded and non-negative. For any $g\in G$, we have
$$\sum_{i=1}^n  t_i g\inv g_i v  = g\inv \sum_{i=1}^n  t_i g_i v \geq 0.$$
Applying $\delta$ we find $\sum_{i=1}^n  t_i \delta( g\inv g_i v) \geq 0$ which means $\sum_{i=1}^n  t_i f(g_i\inv g) \geq 0$. Since this holds for all $g\in G$, we have $\sum_{i=1}^n  t_i g_i f \geq 0$ and hence the translate property yields $\sum_{i=1}^n  t_i \geq 0$ as desired.
\end{proof}

\begin{thm}\label{thm:TP-amen}
If a group satisfies the translate property, then it is amenable.
\end{thm}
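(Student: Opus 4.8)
I would argue by contraposition: assuming $G$ is non-amenable, I would produce a bounded non-zero $f\geq 0$ on $G$ violating the translate property, i.e.\ find $t_i\in\RR$ and $g_i\in G$ with $\sum_i t_i g_i f\geq 0$ but $\sum_i t_i<0$. The natural source of such a relation is a paradoxical-type decomposition. By Tarski's theorem, non-amenability of $G$ means $G$ is paradoxical as a $G$-set: there are subsets and group elements witnessing that $G$ can be duplicated. However, a raw paradoxical decomposition gives an inequality in the wrong direction (it bounds $f=\one_G$ below by a sum of more than one translate of pieces), and — more importantly — the pieces are not translates of a \emph{single} function. So the real work is to pass from ``$G$ non-amenable'' to a clean statement about translates of one function.

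**Key steps.** First, recall the finitary/combinatorial characterisation of non-amenability: $G$ is non-amenable iff there is a finite subset $S\subseteq G$ and $\epsilon>0$ such that $\sum_{s\in S}\one_{sA}\geq (1+\epsilon)\one_A$ pointwise for every finite $A\subseteq G$ — equivalently a Følner-type failure, or via the existence of a ``doubling'' map. Actually the cleanest input here is the result of Moore the author flags (\cite{Moore_ramsey}): for a non-amenable $G$ there exist $g_1,\dots,g_n\in G$ and a function — or one uses that some convex combination $\frac1n\sum_i g_i$ of the left-translation operators on $\ell^\infty(G)$ has operator norm bounded away from $1$ on the positive cone in a suitable sense, yielding $\sum_i g_i f\geq c f$ with $c>n$ impossible... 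I would instead run it as: non-amenability gives, for $f=\one_G\in\ell^\infty(G)$, elements $g_1,\dots,g_n$ and a strict inequality of the form $\sum_{i=1}^n g_i f\;\geq\; (n+1)\cdot(\text{something})$ — but since $f=\one_G$ is $G$-invariant this degenerates. Hence $f=\one_G$ is useless and one must use a \emph{non-invariant} $f$; this is exactly why the translate property quantifies over all $f$. So the correct route: take any non-amenable $G$, apply the combinatorial paradox to get finite sets and translations exhibiting a ``$2$-to-$1$'' contraction, encode the relevant pieces as the support structure of a single bounded $f\geq0$ (e.g.\ $f=\one_A$ for a cleverly chosen $A$, or $f$ a suitable finitely-supported weighting), and read off a relation $\sum_i t_i g_i f\geq 0$ with $\sum_i t_i<0$.

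**The main obstacle.** The crux — and where I expect to spend the effort — is manufacturing a \emph{single} function $f$ whose translates already contain the combinatorial relation: a paradoxical decomposition naturally involves several different ``pieces'' $A_1,\dots,A_k$, whereas the translate property only sees $\{g f: g\in G\}$. Bridging this gap is presumably where Moore's Ramsey-type result \cite{Moore_ramsey} enters: it should allow one to find, inside a non-amenable group, a single subset (or function) all of whose needed sub-pieces are themselves translates of one another, collapsing the multi-piece paradox into a one-function statement. I would structure the proof as: (1) state the combinatorial non-amenability criterion; (2) invoke Moore's theorem to extract a ``monotileable'' or translate-homogeneous witnessing configuration; (3) write out the resulting inequality among translates of the associated $f$; (4) observe the coefficient sum is negative, contradicting the translate property. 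Steps (1),(3),(4) are routine; step (2) is the heart.
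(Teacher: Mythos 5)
There is a genuine gap, and it sits exactly where you locate it: your step (2). You correctly guess that Moore's result is the external input, but you misread both its content and the direction in which it is used. Moore's theorem (Theorem 1.3 of \cite{Moore_ramsey}) states that $G$ is amenable if and only if for \emph{every single} subset $E\se G$ there is a positive linear form $\mu$ on $\ell^\infty(G)$ with $\mu(\one_G)=1$ and $\mu(\one_E)=\mu(\one_{gE})$ for all $g\in G$; it is not a device for collapsing a multi-piece paradoxical decomposition into translates of one function. The paper therefore argues \emph{directly}, not by contraposition: fix $E$, let $D$ be the span of $\{\one_E-\one_{gE}: g\in G\}$, and show that the translate property for $f=\one_E$ forces the quotient norm of $\one_G$ in $\ell^\infty(G)/\,\overline{\!D}$ to equal $1$. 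Indeed, if some $v=\sum_i t_i(\one_E-\one_{g_iE})$ satisfied $\|\one_G-v\|_\infty\leq 1-\epsilon$, then $v\geq\epsilon\one_G\geq\epsilon\one_E$, so $v-\epsilon\one_E$ is a non-negative combination of translates of $\one_E$ whose coefficients sum to $-\epsilon<0$, contradicting the translate property. Hahn--Banach then yields a norm-one functional on the quotient sending the class of $\one_G$ to $1$; the corresponding functional on $\ell^\infty(G)$ is automatically positive, vanishes on $D$, and Moore's theorem concludes.

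Your plan never supplies the bridge between the translate property and any invariance statement. The honest contrapositive of the above would require you to show: if no mean is invariant on the orbit of $\one_E$, then $\one_G$ lies within distance $<1$ of $\,\overline{\!D}$, and hence some combination of translates of $\one_E$ with negative coefficient sum is non-negative. That is a duality/Hahn--Banach step in a quotient Banach space, not a combinatorial extraction from a paradoxical decomposition, and it is absent from your outline. The dead end you notice with $f=\one_G$ (invariance makes the relation degenerate) is a symptom of aiming at the wrong target: the witnessing function should be $\one_E$ for a set $E$ supplied by Moore's theorem, and the negative coefficient sum comes from norm estimates, not from rearranging the pieces of a paradox.
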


\begin{proof}
By a result of Moore, it suffices to show that for every subset $E\se G$ there is a positive linear form $\mu$ on $\ell^\infty(G)$ normalised for $\one_G$ and such that $\mu(\one_E)=\mu(\one_{g E})$ for all $g\in G$, see Theorem~1.3 in~\cite{Moore_ramsey}.

Consider the subspace $D< \ell^\infty(G)$ spanned by all $\one_E - \one_{g E}$ as $g$ ranges over $G$. Denote by $Q$ the quotient space $\ell^\infty(G) \big/ \,\overline{\!D}$ with the quotient norm (where $\,\overline{\!D}$ denotes the norm-closure).

We claim that the $Q$-norm of the canonical image of $\one_G$ is one. For otherwise we can find $\epsilon>0$, $t_1, \ldots, t_n\in\RR$ and $g_1, \ldots, g_n\in G$ such that $v=\sum_i t_i (\one_E - \one_{g_i E})$ satisfies
$$\|\one_G - v\|_\infty \ \leq\ 1-\epsilon.$$
Thus $\one_G - v \leq (1-\epsilon)\one_G$ and hence $v\geq \epsilon\one_G$. In particular, $v - \epsilon \one_E$ is non-negative on $G$. However, such an element $v -  \epsilon\one_E$ stands in violation of the translate property because the sum of its coefficients is~$-\epsilon$. This proves the claim.

Now the Hahn--Banach theorem provides a continuous linear form $\tilde\mu$ of norm one on $Q$ such that $\tilde\mu({\one_G+\,\overline{\!D}})=1$. The corresponding linear form $\mu$ on $\ell^\infty(G)$ satisfies $\mu(\one_G)=1$ and is still of norm one; in particular, $\mu$ is positive. Since $\mu$ vanishes on $D$, it satisfies $\mu(\one_E)=\mu(\one_{g E})$ for all $g\in G$ as requested.
\end{proof}

The following consequence of Theorem~\ref{thm:TP-amen} completes the proof of Theorem~\ref{thm:equiv}, except for point~\ref{pt:equiv:Reiter} which we defer to Section~\ref{sec:Reiter}.

\begin{cor}\label{cor:TP-MG}
The group $G$ satisfies the translate property if and only if for every non-zero bounded function $f\geq 0$ on $G$, there is an integral on $G$ normalised for $f$.\qed
\end{cor}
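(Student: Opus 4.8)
The plan is to deduce Corollary~\ref{cor:TP-MG} by combining Theorem~\ref{thm:TP-amen} with the classical fact that, for an amenable group, an invariant positive functional on $\Span(Gf)$ normalised for $f$ extends to all of $\ell^\infty(G;f)$. So the argument splits into two implications.

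For the ``only if'' direction: assume $G$ has the translate property and let $f\geq 0$ be a non-zero bounded function. As recalled in the introduction, the translate property for $f$ guarantees that the assignment $\sum_i t_i g_i f \mapsto \sum_i t_i$ is a well-defined, $G$-invariant, positive linear form on $\Span(Gf)$ normalised for $f$. By Theorem~\ref{thm:TP-amen}, $G$ is amenable. Now I would invoke the extension result (Proposition~1.3.1 together with Theorem~1.3.2 of~\cite{GreenleafBook}, or the discussion in~\cite{Rosenblatt74}): since $f$ dominates $\ell^\infty(G;f)$ in the sense that every $h\in\ell^\infty(G;f)$ satisfies $\pm h\leq \sum_i g_i f$ for suitable $g_i$, one can use an invariant mean on $G$ to average a Hahn--Banach extension of the functional and obtain a $G$-invariant positive linear form $\mu$ on $\ell^\infty(G;f)$ with $\mu(f)=1$; this is precisely an integral on $G$ normalised for $f$ in the sense of Definition~\ref{def:int}.

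For the ``if'' direction: suppose that for every non-zero bounded $f\geq 0$ there is an integral on $G$ normalised for $f$. Fix such an $f$ and such an integral $\mu$ on $\ell^\infty(G;f)$. If $\sum_{i=1}^n t_i g_i f \geq 0$, then since $\Span(Gf)\subseteq \ell^\infty(G;f)$ and $\mu$ is positive and linear we get $\sum_i t_i \mu(g_i f) \geq 0$; but $\mu$ is $G$-invariant, so $\mu(g_i f)=\mu(f)=1$ for each $i$, whence $\sum_i t_i \geq 0$. This is exactly the translate property for $f$, and since $f$ was arbitrary, $G$ has the translate property.

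The ``if'' direction is entirely routine. The main obstacle in the ``only if'' direction is the extension step from $\Span(Gf)$ to $\ell^\infty(G;f)$: one must check carefully that amenability really does suffice here, i.e.\ that the averaging argument of~\cite{GreenleafBook} goes through with $f$ merely a bounded non-negative function rather than a characteristic function, and that positivity is preserved under the invariant-mean average. The point made in the introduction — that the amenability hypothesis is genuinely needed for this step (and was the source of historical confusion) — is precisely what Theorem~\ref{thm:TP-amen} now makes legitimate, so the corollary is really the clean packaging of that fact together with the elementary reverse implication.
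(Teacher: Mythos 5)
Your proof is correct and takes essentially the same route as the paper: the corollary is stated there with no written proof precisely because it is the combination of Theorem~\ref{thm:TP-amen} with the classical extension result recalled in the introduction (an invariant positive form on $\Span(Gf)$ extends to $\ell^\infty(G;f)$ provided $G$ is amenable), together with the routine converse obtained by restricting an integral to $\Span(Gf)$ — exactly the two steps you carry out.
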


The proof of Theorem~\ref{thm:TP-amen} given above uses only the fact that $G$ satisfies the translate property for characteristic functions of subsets of $G$. Therefore, we deduce the following variant of Corollary~\ref{cor:TP-MG}.

\begin{cor}\label{cor:TP-SM}
A group satisfies the translate property for sets if and only if it is supramenable.\qed
\end{cor}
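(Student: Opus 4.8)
The plan is to run, for characteristic functions $f=\one_A$, exactly the chain of reasoning behind Corollary~\ref{cor:TP-MG}, and then to add the one extra step that connects an integral normalised for $\one_A$ (Definition~\ref{def:int}) with a genuine $G$-invariant finitely additive measure normalised for $A$. Throughout I use supramenability in the reduced form recalled in the introduction: $G$ is supramenable if and only if for every non-empty $A\se G$ there is a $G$-invariant finitely additive measure $\mu\colon 2^G\to[0,\infty]$ with $\mu(A)=1$ (the passage from an arbitrary $G$-set to $G$ itself being the classical reduction of von~Neumann and Tarski, \cite{vonNeumann29S}, \cite{Tarski38}).

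For the easy direction, assume $G$ is supramenable and fix $\varnothing\neq A\se G$ together with a $G$-invariant finitely additive $\mu$ with $\mu(A)=1$. If $\sum_{i=1}^n t_i g_i\one_A\ge 0$ with $t_i\in\RR$ and $g_i\in G$, then the function $\sum_i t_i\one_{g_iA}$ is supported on the set $\bigcup_i g_iA$, which has $\mu$-measure at most $n<\infty$; it is therefore $\mu$-integrable and its integral equals $\sum_i t_i\,\mu(g_iA)=\sum_i t_i\,\mu(A)=\sum_i t_i$ by invariance. Non-negativity of the integrand forces $\sum_i t_i\ge 0$, which is the translate property for $\one_A$. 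So $G$ has the translate property for sets.

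Conversely, assume $G$ has the translate property for sets. By Theorem~\ref{thm:TP-amen} --- whose proof, as observed just above, only invokes the translate property for characteristic functions --- the group $G$ is amenable. Fix $\varnothing\neq A\se G$. The translate property for $\one_A$ says precisely that $\sum_i t_i g_i\one_A\mapsto\sum_i t_i$ is a well-defined positive $G$-invariant linear form $\nu_0$ on $\Span(G\one_A)$ with $\nu_0(\one_A)=1$; since $G$ is amenable, the classical extension result of Greenleaf and Rosenblatt~\cite[\S\,1]{GreenleafBook}, \cite[\S\,1]{Rosenblatt_PhD} extends $\nu_0$ to a positive $G$-invariant linear form $\nu$ on $\ell^\infty(G;\one_A)$. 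To turn $\nu$ into a measure on all subsets, let $\mathcal R=\{B\se G:\one_B\in\ell^\infty(G;\one_A)\}$; since $\one_B\in\ell^\infty(G;\one_A)$ means $\one_B\le\sum_i\one_{g_iA}$ for some finite family, $\mathcal R$ is a \emph{hereditary} ring of subsets, it is $G$-invariant, and $B\mapsto\nu(\one_B)$ is a finite-valued finitely additive measure on it. Define
$$\mu(B)\ =\ \sup\big\{\nu(\one_{B'}):B'\in\mathcal R,\ B'\se B\big\}\ \in[0,\infty]\qquad(B\se G).$$
Then $\mu$ is the usual supremum-extension of $\nu|_{\mathcal R}$; it is finitely additive precisely because $\mathcal R$ is hereditary (so that for $B_1\cap B_2=\varnothing$ and $B'\in\mathcal R$ with $B'\se B_1\sqcup B_2$ one has $B'\cap B_i\in\mathcal R$), it is $G$-invariant because $\mathcal R$ and $\nu$ are, and $\mu(A)=\nu(\one_A)=1$, the bound $\mu(A)\le 1$ coming from positivity of $\nu$ applied to $0\le\one_{B'}\le\one_A$. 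Hence $\mu$ witnesses supramenability for $A$, and since $A$ was arbitrary, $G$ is supramenable.

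The only genuinely non-formal ingredient is the extension of $\nu_0$ from $\Span(G\one_A)$ to $\ell^\infty(G;\one_A)$ in the converse direction: this is the step that forces one to know \emph{a priori} that $G$ is amenable (the point flagged in the introduction), which is why Theorem~\ref{thm:TP-amen} must be in place first. The passage from the integral $\nu$ to the unbounded measure $\mu$ is elementary but should not be skipped, since $\one_G$ need not belong to $\ell^\infty(G;\one_A)$ and $\mu$ is typically infinite on large --- e.g.\ syndetic --- subsets of $G$.
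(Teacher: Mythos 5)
Your proof is correct and follows exactly the route the paper intends: the forward direction by integrating against the invariant measure, and the converse by combining Theorem~\ref{thm:TP-amen} (whose proof uses only characteristic functions) with the Greenleaf--Rosenblatt extension under amenability and the standard translation between integrals normalised for $\one_A$ and invariant $[0,\infty]$-valued measures normalised for $A$. You merely spell out the details (notably the sup-extension over the hereditary ring) that the paper leaves implicit.
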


\section{Smaller cones}\label{sec:smaller}
In preparation for the proof of Theorem~\ref{thm:stable}, we begin with two reduction statements.

\begin{prop}\label{prop:min}
In the definition of the fixed-point property for cones, one can assume in addition that the cone is minimal as an invariant closed convex cone containing a non-zero bounded orbit.

In particular, one can assume that the bounded orbits are dense in the cone.
\end{prop}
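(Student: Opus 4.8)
The plan is to fix a $G$-representation of locally bounded cobounded type on a weakly complete proper convex cone $C\se E$, and to produce inside $C$ a closed invariant convex sub-cone $C_0$ that still contains a non-zero bounded orbit, is minimal with this property, and still carries a cobounded representation; any non-zero fixed point in $C_0$ is then a non-zero fixed point in $C$, so it suffices to prove the fixed-point property under the extra minimality hypothesis. The last sentence (density of bounded orbits) is then immediate: the closed convex sub-cone generated by all bounded orbits in $C_0$ is invariant and contains a non-zero bounded orbit, so by minimality it equals $C_0$.

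The construction of $C_0$ will be a Zorn's lemma argument, and the main obstacle is exactly the one the author flags: the set of points with bounded orbit is not closed, so one cannot naively intersect a chain of closed invariant sub-cones and expect a bounded orbit to survive in the intersection. To get around this I would first fix one non-zero point $x\in C$ whose $G$-orbit is bounded (given by local boundedness) and work only with closed invariant convex sub-cones $D\se C$ that contain $x$ itself (not merely some bounded orbit); such $D$ form a non-empty poset under reverse inclusion. For a chain $(D_\alpha)$, the intersection $D=\bigcap_\alpha D_\alpha$ is a closed invariant convex sub-cone containing $x$, hence still contains the bounded orbit $Gx$, so Zorn's lemma applies and yields a minimal element $C_0$. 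Minimality of $C_0$ in the class of closed invariant convex sub-cones containing $x$ upgrades to minimality in the class containing a non-zero bounded orbit: if $D\subsetneq C_0$ is closed, invariant, convex and contains some non-zero $y$ with $Gy$ bounded, then the closed invariant convex sub-cone generated by $y$ is contained in $D$ and is a candidate, so we may as well assume $D$ is the closure of the convex cone on $Gy$; but then one still has to know $C_0$ admits no proper such sub-cone — here one invokes that $C_0$ was chosen minimal among those containing the specific orbit $Gx$, together with the observation (below) that any two non-zero bounded orbits in a minimal cone generate the same sub-cone. Concretely: the closed invariant convex sub-cone $D'\se C_0$ generated by $Gx$ must equal $C_0$ by minimality, so $C_0$ is already the smallest closed invariant convex cone containing a bounded orbit once we know it contains one whose generated sub-cone is all of $C_0$; any smaller $D$ would contradict this.

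It remains to check that the restricted representation on $C_0$ is still of cobounded type, and here I would use Scholium~\ref{sch:weaker:cb} rather than the literal definition: the map $E'\to (C_0)'$ (restriction of linear forms, or more precisely the induced map on the dual cone of $C_0$) is $G$-equivariant and positive, and a $G$-dominating $\fhi\in E'$ restricts to an element that $G$-dominates the pre-order on $(C_0)'$ given by evaluation on the bounded orbit $Gx\se C_0$ — which is precisely the weakened coboundedness hypothesis under which the proof of \ref{pt:equiv:MG}$\Rightarrow$\ref{pt:equiv:FPC} goes through. (Alternatively, since $C_0$ has a base one can argue directly as in the proof of Theorem~\ref{thm:RNCD}, where the author already notes that a cone with a base is trivially of cobounded type; but that requires knowing $C_0$ has a weakly compact base, which needs weak local compactness and is not available here, so the Scholium route is the right one.) Finally one notes that $C_0\neq\{0\}$ since $x\neq 0$, $C_0$ is proper and weakly complete as a weakly closed subset of the weakly complete $C$, so $C_0$ meets all the hypotheses of the fixed-point property with minimality added, completing the reduction. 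I expect the genuinely delicate point to be the passage from "minimal among cones containing the fixed orbit $Gx$" to "minimal among cones containing some bounded orbit", i.e.\ ruling out that a proper sub-cone $D\subsetneq C_0$ sneaks in a different, unrelated bounded orbit; the resolution is that such a $D$, being closed invariant convex, would have to contain $x$ as well — indeed $x$ lies in every nonzero closed invariant convex sub-cone of a minimal such cone, by re-running the Zorn argument inside $D$ — contradicting minimality of $C_0$.
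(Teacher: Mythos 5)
There is a genuine gap at exactly the point you flag as delicate. Your Zorn argument runs over closed invariant convex sub-cones containing the \emph{fixed} point $x$, and therefore only produces a cone $C_0$ minimal among those containing the particular orbit $Gx$. The proposition asserts minimality among all invariant closed convex sub-cones containing \emph{some} non-zero bounded orbit, and this stronger form is what the applications actually use (in the proof of Theorem~\ref{thm:RNCD} minimality is played against the orbit of a midpoint $z$, and in the central-extension argument against the orbits of $x^{\pm}$ and of $x+zx$ --- none of which is the orbit one started from). Your attempted upgrade does not work: a proper closed invariant convex sub-cone $D\subsetneq C_0$ generated by a different bounded orbit $Gy$ need not contain $x$, and ``re-running the Zorn argument inside $D$'' only yields a minimal sub-cone of $D$ containing $y$, which says nothing about $x$. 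Likewise the assertion that any two non-zero bounded orbits in $C_0$ generate the same sub-cone is exactly what needs to be proved and is not a consequence of your construction. The fact that $C_0$ equals the closed invariant convex cone generated by $Gx$ does not preclude proper sub-cones generated by other bounded orbits.

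The paper's proof takes the chain problem head on: for a chain $\sC$ of closed invariant convex sub-cones each containing some non-zero bounded orbit, one picks $x_D\neq 0$ with bounded orbit in each $D\in\sC$ and uses the $G$-dominating form $\fhi$ to normalise, after a translation, so that $\fhi(x_D)\geq 1/2$ and $\sup\fhi(Gx_D)=1$. Domination then forces the net $(x_D)$ to be weakly bounded, so it has a weak accumulation point $x$ lying in $C$ by weak completeness and in $\bigcap\sC$ since the chain consists of weakly closed sets; $\fhi(x)\geq 1/2$ gives $x\neq 0$, and $\fhi(gx)\leq 1$ together with domination and Banach--Steinhaus gives that $Gx$ is bounded. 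This is where coboundedness does real work in the minimality argument, whereas in your proposal it only appears when checking that the hypothesis passes to sub-cones (a step the paper dispatches by noting that the dual pre-order induced by a sub-cone is weaker, so domination is automatic). Your treatment of the density statement is fine, but the minimality claim itself is not established.
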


The reason why the minimality statement is not completely obvious is that the union of bounded orbits is generally not closed.

\begin{proof}[Proof of Proposition~\ref{prop:min}]
We consider a group $G$ with a representation on a weakly complete cone $C\se E$. We suppose that there is a non-zero element with a bounded $G$-orbit in $C$ and that $\fhi\in E'$ is $G$-dominating.

For the minimality, it suffices to show that for any chain $\sC$ of closed $G$-invariant convex cones $D\se C$ containing non-zero bounded orbits, the intersection $\bigcap \sC$ still contains a non-zero bounded orbit. Indeed, a linear form $\fhi\in E'$ that is $G$-dominating with respect to the pre-order dual to $C$ is a fortiori dominating for the pre-order induced on $E'$ by any sub-cone of $C$, and closed convex sub-cones are weakly closed and hence remain weakly complete.

Let thus $\sC$ be such a chain and choose in each $D\in \sC$ a point $x_D\neq 0$ whose orbit is bounded. Since $\fhi$ is $G$-dominating, there is $g\in G$ (depending on $D$) with $\fhi(g x_D) >0$. Since the orbit of $x_D$ is bounded, we can take $g$ such that moreover $\fhi(g x_D) \geq \frac12 \sup \fhi(G x_D)$. Therefore we can change our choice of $x_D$ by translating and renormalising, and assume that $\fhi(x_D) \geq 1/2$ and $\sup \fhi(G x_D)=1$ hold for all $D$.

Since $\fhi$ is $G$-dominating, it follows that for every $\lambda\in E'$, the value $\lambda(x_D)$ is bounded independently of $D$. Therefore, the net $(x_D)_{D\in \sC}$ has an accumulation point $x$ in the weak completion of $E$ and $x\in C$ by weak completeness of the cone. Since $\sC$ is a chain of weakly closed sets, $x\in\bigcap\sC$. Moreover, $x\neq 0$ because $\fhi(x)\geq 1/2$. Finally, the $G$-action being weakly continuous, we have $\fhi(g x)\leq 1$ for all $g\in G$; by $G$-domination, this implies that the orbit $G x$ is weakly bounded, and thus bounded by the generalised Banach--Steinhaus principle (Cor.~3 in~\cite[III \S\,5 No.\,3]{BourbakiEVT81} or~\cite[3.18]{RudinFA}).

The density statement follows from minimality since the set of points with bounded orbit is a $G$-invariant convex cone in $C$. 
\end{proof}

A different notion of smallness is provided by the following result.

\begin{prop}\label{prop:FPC:den}
For countable groups, the fixed-point property for cones holds as soon as it holds for all cones in spaces whose weak topology is first-countable.
\end{prop}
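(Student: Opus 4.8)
The plan is to start from a $G$-representation of locally bounded cobounded type on a weakly complete proper convex cone $C \se E$ and, using the countability of $G$, replace $E$ by a ``small'' subspace on which the weak topology becomes first-countable while retaining all the structural hypotheses. First I would apply Proposition~\ref{prop:min} to reduce to the case where $C$ is minimal among invariant closed convex sub-cones containing a non-zero bounded orbit; in particular the bounded orbits are dense in $C$, and we may fix a non-zero $x\in C$ with bounded orbit $Gx$. Since we want a \emph{weak} topology that is first-countable, the natural move is to shrink the \emph{predual}: only countably many continuous linear forms should survive. The candidate is the subspace $V \se E'$ spanned (over $\QQ$, then closed under the relevant operations) by a countable set of linear forms adapted to the problem — namely a $G$-dominating $\fhi \in E'$ together with its whole $G$-orbit $\{g\fhi : g\in G\}$, which is countable because $G$ is; one then replaces the ambient duality $\langle E, E'\rangle$ by $\langle E, \Span_{\QQ}(G\fhi)\rangle$, or more precisely works in the Hausdorff quotient of $E$ modulo the forms in $V$ that annihilate it.

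The key steps, in order, are: (i) reduce to minimal $C$ via Proposition~\ref{prop:min}, fixing $x\neq 0$ with $Gx$ bounded and $\fhi$ $G$-dominating; (ii) form the countable set $\Phi = \{g\fhi : g \in G\} \se E'$, let $V = \overline{\Span(\Phi)}$ (weak-* closure) and consider the duality $\langle E, V\rangle$; since $\fhi$ is $G$-dominating, $V$ still separates points of $C$ (any $\lambda\in E'$ is dominated by a finite sum of $g\fhi$, so $\lambda$ is determined on $C$ by its values against $\Phi$ up to the relevant order-theoretic bookkeeping), so that passing to the Hausdorff quotient $\bar E = E / V^\perp$ is injective on $C$; (iii) check that $\bar E$, equipped with the $\sigma(\bar E, V)$-topology, is a locally convex Hausdorff space whose weak topology is first-countable — this is where I would use that $V$ admits a countable total subset, so that a neighbourhood basis at $0$ is given by finite intersections of $\{|\langle \cdot, g_n\fhi\rangle| < 1/k\}$; (iv) verify that the image $\bar C$ of $C$ is still weakly complete (the weak completeness of $C$ in $\langle E, E'\rangle$ pushes forward, using that $V \se E'$ and that $\bar C$ is the continuous injective image of a complete set which is moreover weakly closed in $\bar E$), proper, and that the induced $G$-representation is still of locally bounded type (the orbit of the image of $x$ is bounded) and of cobounded type (the image of $\fhi$ $G$-dominates $V' = \bar E' $ by construction); (v) apply the hypothesis to get a non-zero fixed point in $\bar C$ and pull it back along the injection $\bar C \hookrightarrow$ (the fixed-point set in $C$) — here one invokes that a $G$-fixed point of $\bar C$ corresponds, via weak completeness and a Hahn--Banach separation argument exactly as in the proof of \ref{pt:equiv:MG}$\Rightarrow$\ref{pt:equiv:FPC} in Section~\ref{sec:MG}, to a genuine fixed point in $C$ (or, more simply, the quotient map $C\to\bar C$ being a $G$-equivariant injection, it suffices to know the fixed point lies in the image, which follows because $C$ is weakly closed and the fixed point, being a bounded-orbit limit by minimality/density, is approximated from within $C$).

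The main obstacle I expect is step (iv) together with step (v): ensuring that passing to the coarser duality does not destroy weak completeness of the cone, and then recovering an honest fixed point in $C$ (not merely in $\bar E$). The subtlety is that $\bar E$ is a Hausdorff \emph{quotient} of $E$, so the cone $\bar C$ could a priori fail to be closed or complete in the new topology; one controls this exactly as in Section~\ref{sec:MG}, embedding $\bar C$ into the algebraic dual $V^*$ (where it is weak-* closed by weak completeness) and checking the map $C \to V^* $ is injective on $C$ because $V$ separates points of the proper cone $C$ — the properness and the $G$-domination by $\fhi$ being precisely what forces this separation. Once $\bar C$ is weakly complete and the two type-conditions survive, the hypothesis of the proposition applies verbatim, and the fixed point it produces lies in $\bar C$, hence lifts uniquely to the corresponding fixed ray in $C$ since the quotient map $C\to\bar C$ is an injective $G$-morphism of cones. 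I would present the separation/lifting as a short lemma mirroring the argument already used for \ref{pt:equiv:MG}$\Rightarrow$\ref{pt:equiv:FPC}.
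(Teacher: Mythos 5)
Your overall strategy---shrink the dual to the countable\--dimensional span of the orbit of a dominating functional, so that the relevant weak topology becomes first-countable---is the right first move, and it is essentially what the paper does (it applies the restricted hypothesis to the cone of positive forms on $\Span(Gf)^*$, whose weak-* topology is first-countable because $\Span(Gf)$ has countable algebraic dimension). But your steps (ii), (iv) and (v) contain a genuine gap, and it is exactly the gap the paper warns about in the paragraph preceding its proof. First, the separation claim is unjustified: $G$-domination $\pm\lambda\leq\sum_i g_i\fhi$ only gives the bound $|\lambda(c)|\leq\sum_i\fhi(g_ic)$ for $c\in C$; it does \emph{not} determine $\lambda(c)$ from the values $\{\fhi(gc)\}_{g}$. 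It shows that no non-zero point of $C$ is annihilated by all of $\Span(G\fhi)$, but injectivity of $C\to V^*$ would require $\Span(G\fhi)$ to separate \emph{differences} of points of $C$, which do not lie in $C$, so domination gives you nothing there. Second, and independently, weak completeness does not survive the coarsening of the duality: a $\sigma(E,\Span(G\fhi))$-Cauchy net in $C$ need not be $\sigma(E,E')$-Cauchy, so the image $\bar C$ is a priori not complete and must be replaced by its closure in $V^*$; the fixed point produced by the hypothesis then lives in that closure and there is no reason for it to lie in the image of $C$. The Hahn--Banach lifting you invoke from Section~\ref{sec:MG} cannot repair this: that argument separates $\check\mu$ from $C$ using an \emph{arbitrary} $\lambda\in E'$ and therefore needs the invariant functional $\mu$ to be defined on all of $\ell^\infty(G;\hat\fhi)$, whereas your construction only produces it on $\Span(G\hat\fhi)$. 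Extending from $\Span(Gf)$ to $\ell^\infty(G;f)$ is precisely Greenleaf's problem and is not a formal step.

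The missing idea is the detour through amenability. The paper accepts that the restricted hypothesis only yields an invariant positive normalised form on $\Span(Gf)$, i.e.\ the translate property; it then proves separately (Theorem~\ref{thm:TP-amen}, using Moore's result) that the translate property forces amenability, which by Corollary~\ref{cor:TP-MG} allows the functional to be extended to an integral on all of $\ell^\infty(G;f)$, and only then does the implication \ref{pt:equiv:MG}$\Rightarrow$\ref{pt:equiv:FPC} of Section~\ref{sec:MG} return a fixed point in the original cone $C$. Your proposal tries to shortcut this loop by transporting the fixed point directly back to $C$, and that is the step that fails.
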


We cannot prove this simply by replacing $C$ with the closed convex cone spanned by a bounded $G$-orbit, because weakly separable spaces or cones are usually far from weakly first-countable ($L^2$ provides examples). It also seems that one cannot conduct a limit argument by exhausting $\ell^\infty(G;f)$ by subspaces of countable dimension. The reason is that subspaces of countable dimension cannot, in general, be \emph{full} in the sense of ordered vector spaces -- a fortiori not ideals in the Riesz space $\ell^\infty(G;f)$. This deprives us from the likes of the compactness argument invoked in Proposition~\ref{prop:min}.

\begin{proof}[Proof of Proposition~\ref{prop:FPC:den}]
Consider the argument given in Section~\ref{sec:MG} for the implication~\ref{pt:equiv:FPC}$\Rightarrow$\ref{pt:equiv:MG} of Theorem~\ref{thm:equiv}; we retain its notation.  Exactly the same argument can be applied if we replace the space $\ell^\infty(G;f)$ by the subspace $\Span(G f)$ spanned by the $G$-orbit of $f$. This leads to the a priori weaker conclusion that there is a $G$-invariant positive linear form on $\Span(G f)$, normalised for $f$. This implies the translate property, which we have proved to be in fact equivalent to the existence of an integral on $G$ normalised for $f$ (Corollary~\ref{cor:TP-MG} above). As we have already established in Section~\ref{sec:MG}, this in turn implies the fixed-point property for cones.

\smallskip
The reason why we travel this circuitous route is that when $G$ is countable, it is sufficient in this argument to test the fixed-point property for cones only in spaces $E$ whose weak topology admits a countable neighbourhood basis at each point. Indeed, we used $E=\Span(G f)^*$ and this topology is the weak-* topology induced by $\Span(G f)$, which is a space of countable algebraic dimension.
\end{proof}

\section{First stability properties}
We begin to address the stability properties of the class of groups with the fixed-point property for cones; closure under quotients is obvious. Thanks to Theorem~\ref{thm:equiv}, the closure under subgroups and directed unions becomes easy. Indeed, for subgroups it suffices to show the following.

\begin{lem}
Let $G$ be a group with the translate property. Then any subgroup $H<G$ has the translate property.
\end{lem}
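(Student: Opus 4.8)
The plan is to show directly that if $G$ has the translate property, then so does any subgroup $H < G$. Let $f \geq 0$ be a non-zero bounded function on $H$, and suppose $\sum_{i=1}^n t_i h_i f \geq 0$ on $H$ for some $t_i \in \RR$ and $h_i \in H$; we must deduce $\sum_i t_i \geq 0$. The natural move is to induce $f$ up to a function $F$ on $G$ and exploit the translate property of $G$ for $F$.

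The simplest construction is to extend $f$ by zero off $H$, i.e.\ set $F(g) = f(g)$ if $g \in H$ and $F(g) = 0$ otherwise. Then $F \geq 0$ is bounded and non-zero on $G$. Now I claim $\sum_i t_i h_i F \geq 0$ on all of $G$. Indeed, for $g \in G$, the value $\sum_i t_i (h_i F)(g) = \sum_i t_i F(h_i\inv g)$ involves only the coset $Hg$: if $g \notin H$ then $h_i\inv g \notin H$ for all $i$, so every term vanishes and the sum is $0$; if $g = h \in H$, then $\sum_i t_i F(h_i\inv h) = \sum_i t_i f(h_i\inv h) = \big(\sum_i t_i h_i f\big)(h) \geq 0$ by hypothesis. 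Hence $\sum_i t_i h_i F \geq 0$ on $G$, and the translate property of $G$ applied to $F$ with the same coefficients $t_i$ and group elements $h_i \in H \subseteq G$ gives $\sum_i t_i \geq 0$. This is exactly the translate property for $f$ on $H$.

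I do not expect a serious obstacle here; the one point to check carefully is the coset argument above — that translating $F$ by elements of $H$ respects the decomposition of $G$ into right cosets of $H$, so that the inequality on $H$ transfers verbatim to $G$ without interference from the zero-extension. (If one preferred, one could instead use any $H$-equivariant positive "averaging" or a transversal-based extension, but the zero-extension is cleanest and makes the equivariance transparent.) Once the lemma is in hand, closure of the fixed-point property for cones under subgroups follows from the equivalence \ref{pt:equiv:FPC}$\Leftrightarrow$\ref{pt:equiv:TP} of Theorem~\ref{thm:equiv}.
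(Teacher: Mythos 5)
Your proof is correct and is exactly the paper's argument: extend $f$ by zero outside $H$ and apply the translate property of $G$ to the extension with the same coefficients and group elements. The coset computation you spell out is precisely the detail the paper leaves implicit, and it checks out.
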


\begin{proof}
If $f\geq 0$ is a non-zero bounded function on $H$, we consider it as a function on $G$ by extending it by zero outside $H$. Then the translate property for $f$ with respect to $G$ implies in particular the corresponding property with $H$.
\end{proof}

\begin{rem}\label{rem:sub:int}
Denote by $f\mapsto \tilde f$ the extension by zero outside $H$ in $G$; this yields a positive linear $H$-equivariant map $\ell^\infty(H; f) \to \ell^\infty(G; \tilde f)$. This shows that one can also establish the closure under subgroups using the characterisation in terms of integrals.
\end{rem}

As for directed unions, we only need the following.

\begin{lem}
Suppose that the group $G$ is the union of a family $\{G_\alpha\}_\alpha$ of subgroups which is directed with respect to inclusion. If each $G_\alpha$ has the abstract translate property, then $G$ has the translate property.
\end{lem}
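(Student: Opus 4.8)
The plan is to reduce the translate property for $G$ to the abstract translate property for the groups $G_\alpha$, exploiting the fact that the translate property is a \emph{finitary} condition. Given a non-zero bounded function $f\geq 0$ on $G$, suppose we have scalars $t_1,\ldots,t_n\in\RR$ and elements $g_1,\ldots,g_n\in G$ with $\sum_i t_i g_i f \geq 0$. I want to conclude $\sum_i t_i\geq 0$ using only one of the $G_\alpha$'s. The obstacle is that although the elements $g_1,\ldots,g_n$ all lie in some common $G_\alpha$ by directedness, the function $f$ itself is defined on all of $G$ and is \emph{not} a function on $G_\alpha$; the inequality $\sum_i t_i g_i f\geq 0$ is an inequality of functions on $G$, not on $G_\alpha$. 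So I cannot directly invoke the translate property of $G_\alpha$ for a function on $G_\alpha$.

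This is exactly where the \emph{abstract} translate property earns its keep. First I would fix $\alpha$ so that $g_1,\ldots,g_n\in G_\alpha$, and also (enlarging $\alpha$ if needed, using directedness and $f\neq 0$) so that there is some $h\in G_\alpha$ with $f(h)\neq 0$. Now I consider the $G_\alpha$-representation on the ordered vector space $V=\ell^\infty(G)$ by left translation, restricted to $G_\alpha$, with order the pointwise order; the element $v=f\in V$ is non-negative and non-zero. The point is to produce a positive linear form $\delta$ on $V$ with $\delta(v)\neq 0$ that is bounded on the $G_\alpha$-orbit $G_\alpha v$. Take $\delta$ to be evaluation at the element $h$ chosen above: then $\delta(f)=f(h)\neq 0$, and $\delta$ is positive; boundedness of $\delta$ on $G_\alpha f$ holds because $f$ is bounded, so $|\delta(g f)|=|f(g\inv h)|\leq\|f\|_\infty$ for all $g\in G_\alpha$. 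Thus $v$ and $\delta$ satisfy the hypotheses of Definition~\ref{def:ATP} for the group $G_\alpha$.

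With these data in place, the abstract translate property for $G_\alpha$ applies to the combination $\sum_i t_i g_i v = \sum_i t_i g_i f\geq 0$ (this is a genuine inequality in $V=\ell^\infty(G)$, which is what the abstract property speaks about, so the earlier obstruction disappears) and yields $\sum_i t_i\geq 0$. Since $f$ and the data $(t_i,g_i)$ were arbitrary, $G$ has the translate property. I expect the only subtlety worth a sentence in the write-up is the verification that $\ell^\infty(G)$ with the pointwise order and the \emph{left}-translation $G_\alpha$-action, together with evaluation at a suitable point, really is an instance covered by Definition~\ref{def:ATP}; everything else is bookkeeping. Note in particular that the conclusion is only the translate property for $G$ (not a priori the abstract one), which is all that is claimed, and which by Lemma~\ref{lem:TP-ATP} is anyway equivalent.
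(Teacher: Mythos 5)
Your proof is correct and follows essentially the same route as the paper: pick a $G_\alpha$ containing all the $g_i$ and apply its \emph{abstract} translate property to the $G_\alpha$-representation on $\ell^\infty(G)$, with evaluation at a point where $f$ is non-zero serving as the positive form $\delta$ required by Definition~\ref{def:ATP}. (One cosmetic remark: you need not arrange $h\in G_\alpha$ --- $\delta$ only has to be a positive form on $\ell^\infty(G)$ that is non-zero at $f$ and bounded on the orbit, and evaluation at \emph{any} $h$ with $f(h)\neq 0$ does this since $f$ is bounded.)
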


\begin{proof}
Suppose that we have a counter-example to the translate property for $G$, namely $f\geq 0$ in $\ell^\infty(G)$, $t_1,\ldots, t_n \in \RR$ and $g_1, \ldots , g_n\in G$ with $\sum_i t_i g_i f \geq 0$ but $\sum_i  t_i <0$. Then this violates the \emph{abstract} translation property for any $G_\alpha$ containing all $g_i$ considered with its representation on $\ell^\infty(G)$. Such a $G_\alpha$ exists since the union is directed.
\end{proof}

As we have already mentioned, group extensions do not preserve the fixed-point property for cones. However, extensions by and of finite groups are covered by the next two lemmas.

\begin{lem}
Let $G$ be a group and $H<G$ a finite index subgroup. If $H$ has the fixed-point property for cones, then so does $G$.
\end{lem}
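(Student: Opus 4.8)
The plan is to use the characterisation of the fixed-point property for cones via the translate property (Theorem~\ref{thm:equiv}), or better yet, via the existence of integrals, and to \emph{induce} an integral from the finite-index subgroup $H$ up to $G$. So suppose $H < G$ has finite index, say with coset representatives $g_1 = e, g_2, \ldots, g_m$, so that $G = \bigsqcup_{j=1}^m g_j H$. Given a non-zero bounded function $f \geq 0$ on $G$, I want to produce an integral on $G$ normalised for $f$. The natural move is to restrict attention to $H$: the function $f$, viewed as a function on the subgroup $H$ by restriction, may fail to be non-zero, so instead I would consider the function $\bar f$ on $H$ defined by averaging or summing the translates $g_j^{-1} f$ over the cosets, concretely $\bar f(h) = \sum_{j=1}^m f(g_j h)$ for $h \in H$, or perhaps just pick a single coset on which $f$ does not vanish identically. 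Either way $\bar f \geq 0$ is a non-zero bounded function on $H$, and the key point is that $f$ is $H$-dominated by $\bar f$ up to the fixed finite collection of coset translates; so $\ell^\infty(G; f)$, restricted to a fundamental domain, embeds $H$-equivariantly into finitely many copies of $\ell^\infty(H; \bar f)$.

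The heart of the argument is the induction of an invariant functional. By hypothesis $H$ has the fixed-point property for cones, hence by Theorem~\ref{thm:equiv} there is an $H$-invariant positive linear form $\nu$ on $\ell^\infty(H; \bar f)$ with $\nu(\bar f) = 1$. From $\nu$ I would build a linear form $\mu$ on $\ell^\infty(G; f)$ by the standard co-induction formula: roughly, $\mu(F) = \sum_{j=1}^m \nu\big( h \mapsto F(g_j h)\big)$ for $F \in \ell^\infty(G; f)$, after checking that $h \mapsto F(g_j h)$ genuinely lies in $\ell^\infty(H; \bar f)$ (this is where the finite-index bookkeeping enters: $F$ is $G$-dominated by $f$, so each such restriction is $H$-dominated by a finite sum of translates of $\bar f$, which we must absorb into the definition, possibly by enlarging $\bar f$ to include those translates from the start). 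Then $\mu$ is positive, and $G$-invariance follows because right multiplication by $g \in G$ permutes the cosets $g_j H$ and, within a coset, acts by an element of $H$ under which $\nu$ is invariant — a direct computation unwinding $F \mapsto gF$ against the sum over $j$. Finally one normalises $\mu$ by rescaling so that $\mu(f) = 1$, which is possible provided $\mu(f) > 0$; this positivity should follow from the fact that $f$ $G$-dominates everything in $\ell^\infty(G;f)$ together with positivity of $\mu$, exactly as in the proof of \ref{pt:equiv:FPC}$\Rightarrow$\ref{pt:equiv:MG} in Section~\ref{sec:MG}.

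The main obstacle I anticipate is not the invariance computation (that is routine permutation-of-cosets algebra) but rather making the \emph{domains} match up cleanly: $\ell^\infty(G; f)$ is the space of functions $G$-dominated by $f$, and when we restrict a function $F \in \ell^\infty(G;f)$ to a coset and transport it to $H$, the resulting function on $H$ is dominated not by $\bar f$ alone but by finitely many $H$-translates of $\bar f$ — precisely $\{h_k \bar f\}$ for finitely many $h_k \in H$ coming from how the $g_i$ in the domination $\pm F \leq \sum_i t_i g_i f$ interact with the coset decomposition. The fix is to choose $\bar f$ at the outset to already be $H$-dominating for this finite collection, i.e.\ replace $\bar f$ by $\sum_k h_k \bar f$ if necessary; but one must make sure this replacement can be done uniformly (independent of $F$), which works because the relevant finite set of $H$-translates depends only on the fixed coset representatives $g_1, \ldots, g_m$ and not on $F$. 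Once this is set up, everything else is formal, and I would present the argument in terms of integrals rather than cones to keep it clean, invoking Theorem~\ref{thm:equiv} at both ends.
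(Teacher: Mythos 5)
Your argument is correct, but it follows a genuinely different route from the paper. The paper stays entirely at the level of the cone: it takes a $G$-dominating form $\fhi$ and checks that $\fhi_*=\sum_{r\in R}r\fhi$ (sum over coset representatives) is $H$-dominating, so the $H$-action is still of cobounded type; it then applies the hypothesis to get an $H$-fixed point $x\neq 0$ and averages it, $x^*=\sum_{r\in R}r\inv x$, which is $G$-fixed and non-zero precisely because the cone is proper. You instead pass through Theorem~\ref{thm:equiv} in both directions and co-induce an integral from $H$ to $G$, which is the classical induction-of-invariant-means argument from amenability theory; it is perhaps more transparent to anyone who thinks in terms of means, and it makes the normalisation automatic (with $\bar f(h)=\sum_j f(g_jh)$ you get $\mu(f)=\nu(\bar f)=1$ exactly, no rescaling needed), but it leans on the full equivalence theorem where the paper's proof is self-contained and directly exhibits the fixed point. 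Two small points in your write-up: the ``domain-matching'' obstacle you anticipate is not actually there, because $\ell^\infty(H;\bar f)$ is \emph{by definition} the space of functions $H$-dominated by $\bar f$, i.e.\ bounded by finite sums of $H$-translates of $\bar f$; writing $g_i\inv g_j=g_{k}h_{i,j}$ one sees directly that each restriction $h\mapsto F(g_jh)$ of $F\in\ell^\infty(G;f)$ is dominated by $\sum_i h_{i,j}\inv\bar f$, so no enlargement of $\bar f$ is ever needed. And you should commit to the summed definition of $\bar f$ rather than ``pick a single coset'', since the sum is what makes both the domination estimate and the normalisation work uniformly.
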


\begin{proof}
Consider a $G$-representation on a weakly complete convex proper cone $C\se E$ of locally bounded cobounded type. In order to apply the fixed-point property of $H$, we only need to justify that the $H$-action is of cobounded type. Let thus $\fhi\in E'$ be $G$-dominating. We fix a set $R\se G$ of representatives for the right cosets of $H$ in $G$ and define $\fhi_* = \sum_{r\in R} r\fhi$. Then $\fhi_*$ is $H$-dominating because any finite sum $\sum_i g_i \fhi$ with $g_i\in G$ can be written $\sum_i h_i r_i \fhi$ with $h_i\in H$ and $r_i\in R$, which is bounded by $\sum_i h_i  \fhi_*$ because $r_i\fhi \leq \fhi_*$ for all $i$. Therefore, $H$ fixes a point $x\neq 0$ in $C$. Now $x^*=\sum_{r\in R} r\inv x$ is a $G$-fixed point in $C$ and $x^*\neq 0$ because $C$ is a proper cone.
\end{proof}

\begin{lem}
Let $G$ be a group and $F\lhd G$ a finite normal subgroup. If $G/F$ has the fixed-point property for cones, then so does $G$.
\end{lem}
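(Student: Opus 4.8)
The plan is to descend to the cone of $F$-fixed points, on which the $G$-action factors through $G/F$, and then invoke the hypothesis. So suppose we are given a $G$-representation of locally bounded cobounded type on a weakly complete proper convex cone $C\se E$. First I would fix a non-zero $x\in C$ with bounded $G$-orbit and a $G$-dominating $\fhi\in E'$. Averaging over the finite group $F$, set $x_F=\sum_{h\in F} hx\in C$. Since $C$ is a proper convex cone, a finite sum of elements of $C$ vanishes only if every summand vanishes; as $x=ex\neq 0$ occurs among the summands, $x_F\neq 0$. A reindexing shows $x_F$ is $F$-fixed, and the orbit $Gx_F$ is contained in the Minkowski sum of $|F|$ copies of the bounded set $Gx$, hence is bounded.

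Next I would introduce the closed subspace $E^F\se E$ of $F$-fixed vectors and the cone $C^F=C\cap E^F$. Because $F$ is normal, $G$ preserves $E^F$ and $C^F$ (for $g\in G$, $h\in F$ and $v\in E^F$ one has $h(gv)=g(g\inv hg)v=gv$), and since $F$ acts trivially on $E^F$ this $G$-action factors through $G/F$. The cone $C^F$ is a proper convex cone in the locally convex Hausdorff space $E^F$; it is weakly complete because it is weakly closed in $C$ --- being cut out by the weakly continuous maps $h-\Id$, $h\in F$ --- and the weak topology of $E^F$ coincides with the topology induced from $\sigma(E,E')$, by the Hahn--Banach extension property. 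The point $x_F$ lies in $C^F$ and witnesses that the resulting $G/F$-representation on $C^F$ is of locally bounded type.

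It remains to check the cobounded type condition for this $G/F$-representation, which is the step deserving a little care. I would use the restriction map $r\colon E'\to (E^F)'$, $\lambda\mapsto\lambda|_{E^F}$: it is linear, surjective by Hahn--Banach, positive for the dual orders since $C^F\se C$, and $G$-equivariant with the induced $G$-action on $(E^F)'$ factoring through $G/F$. Given $\mu\in (E^F)'$, lift it to some $\lambda\in E'$; applying $r$ to the inequalities $\pm\lambda\leq\sum_i g_i\fhi$ yields $\pm\mu\leq\sum_i \bar g_i\,(\fhi|_{E^F})$ in $(E^F)'$, where $\bar g_i$ denotes the image of $g_i$ in $G/F$. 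Hence $\fhi|_{E^F}$ is $G/F$-dominating. Applying the fixed-point property of $G/F$ to $C^F$ then produces a non-zero $G/F$-fixed point in $C^F$, which is the sought non-zero $G$-fixed point in $C$.

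There is no essential obstacle here: the two places requiring attention are the passage to $C^F$ (one must confirm $G$-invariance, using normality, and weak completeness) and the descent of the $G$-dominating form, both of which reduce to Hahn--Banach. The argument runs parallel in spirit to the preceding lemma on finite-index subgroups, with averaging over $F$ playing the role of the coset sum.
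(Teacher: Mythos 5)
Your proof is correct and follows essentially the same route as the paper: pass to the cone $C^F=C\cap E^F$ of $F$-fixed points, note that coboundedness descends because the restriction $E'\to (E^F)'$ is equivariant, positive and onto by Hahn--Banach, and use properness of $C$ to see that $\sum_{h\in F}hx$ is a non-zero $F$-fixed point with bounded orbit. You merely spell out the details the paper leaves implicit.
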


\begin{proof}
We consider again a $G$-representation on $C\se E$ as above. Since the subspace $E^F$ of $F$-fixed points is closed, it suffices to justify that we can apply the fixed-point property to the convex proper cone $C^F = C \cap E^F$ in the $G/F$-space $E^F$, this cone remaining weakly complete. This time the cobounded type is automatic because the map $E'\to (E^F)'$ is equivariant, positive and onto by Hahn--Banach. We need to check that $C^F$ has a non-zero point with bounded $G/F$-orbit (which is its $G$-orbit). Since $C$ is proper, the point $\sum_{g\in F} g x$ will do whenever $x\neq 0$ is a point of $C$ with bounded $G$-orbit. 
\end{proof}

\section{Central extensions}
In order to prove that any central extension of a group with the fixed-point property for cones retains this property, we can restrict ourselves to the case of countable groups thanks to points~\ref{pt:stable:sub} and~\ref{pt:stable:union} of Theorem~\ref{thm:stable}. Indeed, a subgroup of a central extension is a central extension of a subgroup. Our strategy is to prove that if a group $G$ has a representation of locally bounded cobounded type on a weakly complete convex proper cone $C\se E$ which is \emph{minimal} in the sense of Proposition~\ref{prop:min}, then the centre of $G$ acts trivially on $C$. This then establishes the stability under central extensions.

However, it will be crucial to know that we can in addition assume the weak topology of $E$ to have a countable neighbourhood basis, because this allows us to argue with extreme rays. Recall that even very familiar cones in separable Hilbert spaces need not have extreme rays (and their weak topology is not first-countable). This is the case of the self-dual cone in $L^2$ considered in Example~\ref{ex:L2}.

In order to ensure the countable neighbourhood basis, we apply Proposition~\ref{prop:min} to the cone provided by Proposition~\ref{prop:FPC:den}. We now choose an arbitrary element $z$ of the centre of $G$ and proceed to prove that $z$ fixes $C$ pointwise.

\medskip
The set $D=\{x + zx : x\in C\}$ is a convex $G$-invariant sub-cone of $C$. It is moreover closed. Indeed, if any net $(x_j + z x_j)_j$ with $x_j\in C$ converges, then so does the net $(x_j)$ upon passing to a subnet because the sum map $C\times C \to C$ is proper, see Cor.~1 in~\cite[II \S\,6 No.\,8]{BourbakiEVT81} together with Thm.~1(c) in~\cite[I \S\,10 No.\,2]{BourbakiTGI_alt}. This implies that the limit of $(x_j + z x_j)_j$ belongs to $D$. Furthermore, $D$ contains a non-zero bounded orbit because the map $x\mapsto x + zx$ is $G$-equivariant. Therefore, by minimality, $D=C$.

It follows that every extreme ray of $C$ is set-wise preserved by $z$. As was already mentioned, there need not exist any extreme ray in general; however, the fact that $E$ has a countable neighbourhood basis implies that $C$ is indeed the closed convex hull of its extreme rays, see Prop.~5 in~\cite[II \S\,7 No.\,2]{BourbakiEVT81}.

Thus, for $x\neq 0$ on an extreme ray, there is a positive scalar $\lambda_x$ such that $zx = \lambda_x x$. On the other hand, for any $\lambda>0$ we define closed convex $G$-invariant sub-cones $C_\lambda^{\pm}\se C$ by $C_\lambda^+ = \{x\in C : zx \geq \lambda x\}$ and $C_\lambda^- = \{x\in C : zx \leq \lambda x\}$. A priori we cannot play the minimality of $C$ against $C_\lambda^{\pm}$ because we do not know whether these cones contain non-zero bounded $G$-orbits. But $C_\lambda^+ + C_\lambda^-$ is still a convex $G$-invariant sub-cone and it is closed by Cor.~2 in~\cite[II \S\,6 No.\,8]{BourbakiEVT81}. Since every extreme ray lies in at least one of $C_\lambda^+$ or $C_\lambda^-$ (according to whether $\lambda_x\geq \lambda$ or $\lambda_x\leq \lambda$), we conclude that $C_\lambda^+ + C_\lambda^-$ is all of $C$.

We now claim that for each $\lambda>0$, one of the two cones $C_\lambda^{\pm}$ contains a non-zero bounded $G$-orbit. Let indeed $x\neq0$ be a point of $C$ with bounded $G$-orbit. Then there exists $x^\pm\in C_\lambda^\pm$ with $x=x^+ + x^-$. Using again the properness of the map $C\times C \to C$, we conclude readily that both $x^\pm$ have bounded $G$-orbits, whence the claim since and at least one of $x^\pm$ is non-zero.

\medskip
We can now invoke minimality again and deduce that for each $\lambda>0$, one of the two cones $C_\lambda^{\pm}$ coincides with $C$. Upon possibly replacing $z$ by $z\inv$, we can assume that for $\lambda=1$ we have $C^+_1 = C$. For all $\lambda > 1$, we must have $C_\lambda^- = C$ since otherwise $C_\lambda^+ = C$, which we claim is impossible. Indeed, there is a non-zero point $x\in C$ with bounded $G$-orbit, hence in particular bounded $\langle z\rangle$-orbit. But $z^n x\geq \lambda^n x$ allows us to apply the properness of $C\times C \to C$ to $z^n x = (z^n x -\lambda^n x ) + \lambda^n x$ and deduce that $\lambda^n x$ is bounded, contradicting $\lambda > 1$.

\medskip
In conclusion, $C= \bigcap_{\lambda>1} C_\lambda^-$ and $C=C^+_1$, which shows that $z$ acts trivially on $C$ -- as was to be proved.\qed

\begin{rem}\label{rem:coif}
An alternative to Proposition~\ref{prop:FPC:den} to ensure the existence of (enough) extreme rays when $G$ is countable is to appeal to Choquet's notion of cones that are \emph{bien coiff\' es} (approximately translated to ``well-capped''). This is a weaker property than first countability and it can be established for cones associated to $\ell^\infty(G;f)^*$ with $G$ countable. The method consists in exhausting $\ell^\infty(G;f)$ by order ideals associated to translates of $f$; although these ideals are not of countable dimension, they give rise each to a cone that can be shown to be bien coiff\' e. This yields a projective limit of a countable system of (non-invariant) cones which remains bien coiff\'e by Ex.~29(b) in~\cite[II \S\,7]{BourbakiEVT81} -- cf.\ Thm.~23 in~\cite{Choquet63}.
\end{rem}

\begin{scholium}
A variation on a subset of the arguments in the above proof of Theorem~\ref{thm:stable}\ref{pt:stable:central} yields the following:

Consider a $G$-representation of locally bounded cobounded type on any weakly complete proper convex cone $C$. Let $Z\colon E\to E$ be any continuous linear operator which commutes with $G$ and preserves $C$. If $C$ is bien coiff\'e, then there is a closed convex $G$-invariant sub-cone $C' \se C$ which contains a non-zero bounded $G$-orbit and such that $Z$ acts by scalar multiplication on $C'$. Indeed, we find first a sub-cone $C'$ that is minimal only within the smaller class of cones $D$ satisfying in addition $Z D\se D$. We then argue that $(\Id+Z) C'$ must be closed in $C'$ and deduce $(\Id+Z) C'=C'$. In particular $Z$ preserves all extreme rays of $C'$. Since $C'$ is also bien coiff\'e, it is the closed convex hull of its extreme rays. An argument with subsets $C^\pm_\lambda$ as above implies that for some $\lambda\geq 0$, the operator $Z$ coincides with $\lambda\Id$ on all of $C'$. At this point we see that $C'$ is moreover minimal in the sense of Proposition~\ref{prop:min} (regardless of $Z$).
\end{scholium}

\section{Subexponential groups}
Recall that a group $G$ is called \textbf{subexponential} if $\lim_{n\to\infty} |S^n|^{1/n}=1$ for every finite subset $S\se G$, where $S^n$ is the set of $n$-fold products of elements of $S$. This holds in particular for all locally finite or abelian groups.

The following property was first recorded by Jenkins, who proved moreover that it characterises subexponentiality~\cite[Lemma~1]{Jenkins76}.

\begin{lem}\label{lem:SE}
Suppose $G$ subexponential. For every finite subset $S\se G$ and every $\epsilon>0$ there is $\ro\neq 0$ in $\ell^1(G)$ with $|s\ro - \ro| \leq \epsilon \ro$ for all $s\in S$.
\end{lem}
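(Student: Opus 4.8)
The plan is to write $\ro$ down explicitly as a geometric weighting of the word-length function for $S$. First I would arrange $S$ conveniently: enlarging $S$ only strengthens the desired conclusion (the inequality $|s\ro-\ro|\le\epsilon\ro$ is required separately for each $s\in S$, and adjoining $s=e$ imposes nothing), and subexponentiality is a hypothesis about \emph{every} finite subset of $G$; so I may replace $S$ by $S\cup S\inv\cup\{e\}$ and assume henceforth that $S$ is symmetric and contains $e$. Then $S^0\se S^1\se S^2\se\cdots$, and the function $m\colon G\to\{0,1,2,\ldots\}\cup\{\infty\}$, $m(g)=\min\{n\ge 0:g\in S^n\}$ (with $m(g)=\infty$ when $g\notin\langle S\rangle$), is the distance from $e$ to $g$ in the Cayley graph of $\langle S\rangle$ with respect to $S$; in particular $m(e)=0$ and $|m(s\inv g)-m(g)|\le 1$ for all $s\in S$ and all $g\in G$.

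Next I would fix $c=(1+\epsilon)\inv\in(0,1)$ and set $\ro(g)=c^{m(g)}$, with the convention $c^{\infty}=0$, and verify the three requirements. That $\ro\ge 0$ is clear, and $\ro\ne 0$ since $\ro(e)=1$. For $\ro\in\ell^1(G)$, I group the sum by the value of $m$: since $\{g:m(g)=n\}\se S^n$, one has $\|\ro\|_1=\sum_{n\ge 0}c^n\,|\{g:m(g)=n\}|\le\sum_{n\ge 0}c^n|S^n|$, and because $|S^n|^{1/n}\to 1$ by subexponentiality the general term satisfies $(c^n|S^n|)^{1/n}\to c<1$, so the series converges by the root test. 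For the pointwise estimate, fix $s\in S$ and $g\in G$; if $g\notin\langle S\rangle$ then $s\inv g\notin\langle S\rangle$ as well and both $\ro(g)$ and $(s\ro)(g)=\ro(s\inv g)$ vanish, while if $g\in\langle S\rangle$ then $(s\ro)(g)/\ro(g)=c^{\,m(s\inv g)-m(g)}$ with exponent in $\{-1,0,1\}$, whence $|(s\ro)(g)-\ro(g)|\le\ro(g)\max\{c\inv-1,\,1-c\}$. Since $c\inv-1=\epsilon$ and $1-c=\epsilon/(1+\epsilon)\le\epsilon$, this yields $|(s\ro)(g)-\ro(g)|\le\epsilon\,\ro(g)$, as desired.

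I do not expect a genuine obstacle here; the only substantive point is to recognise that a \emph{multiplicative} control of $s\ro-\ro$ by $\ro$ calls for a geometric weight $c^{m(g)}$ rather than a F\o lner-type truncated one, and that $c$ may be pushed as close to $1$ as $(1+\epsilon)\inv$ while keeping $\ro$ summable precisely because the growth of $S^n$ is subexponential. This last point is also where the hypothesis is indispensable, in keeping with the fact (noted by Jenkins) that the property characterises subexponentiality: for a set $S$ of exponential growth the same $\ro$ fails to lie in $\ell^1(G)$ once $\epsilon$ is small.
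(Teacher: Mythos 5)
Your construction is exactly the paper's: the paper takes $\ro(g)=e^{-\delta|g|_S}$ with $|g|_S$ the symmetrised word distance and $\delta$ chosen so that $|e^{\pm\delta}-1|\le\epsilon$, which is your $c^{m(g)}$ with $c=e^{-\delta}=(1+\epsilon)\inv$. You have simply written out the details (symmetrising $S$, the root test for summability, the $\pm1$ bound on the increment of the word length) that the paper leaves implicit, and all of them are correct.
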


\begin{proof}
Choose $\delta>0$ such that $|e^\delta -1|\leq \epsilon$ and $|e^{-\delta} -1|\leq \epsilon$. Define the function $\ro$ on $G$ by $\ro(g) = e^{- \delta |g|_S}$, where $|g|_S$ is the (symmetrised) word distance associated to $S$, with the convention $\ro(g)=0$ if $g$ is not in the subgroup generated by $S$. The desired inequality holds by construction and $\ro$ is summable because  $G$ is subexponential.
\end{proof}

Lemma~\ref{lem:SE} can of course also be reformulated with the inequality $|\ro s - \ro| \leq \epsilon \ro$; it suffices to switch left and right multiplication by replacing $S$ with $S\inv$ and $\ro$ with $h\mapsto \ro(h\inv)$. In fact, the function constructed in the above proof satisfies both versions of this inequality simultaneously.

\bigskip
The following proposition will complete the proof of the last remaining item in Theorem~\ref{thm:stable}.

\begin{prop}
Let $G$ be a group with the abstract translate property and let $H$ be a subexponential group. Then ${G\times H}$ has the translate property. 
\end{prop}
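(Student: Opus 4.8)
The plan is to verify the translate property directly for $G\times H$: given a non-zero bounded function $f\geq 0$ on $G\times H$ and a relation $\sum_{i=1}^n t_i (g_i,h_i) f \geq 0$ with $t_i\in\RR$, $(g_i,h_i)\in G\times H$, we must conclude $\sum_i t_i \geq 0$. The point is to reduce this relation on $G\times H$ to a relation to which the abstract translate property for $G$ applies, using Lemma~\ref{lem:SE} to ``average out'' the $H$-coordinate approximately.

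First I would fix the finite set $S=\{h_i\inv h_j : 1\le i,j\le n\}\se H$ (or simply the $h_i$ together with their inverses) and, for a given $\epsilon>0$, invoke Lemma~\ref{lem:SE} to obtain $0\ne\ro\in\ell^1(H)$ with $|h\ro-\ro|\le\epsilon\ro$ for all $h\in S$. Using $\ro$ as a weight, I would build from $f$ a function on $G$: roughly, set $V=\ell^\infty(G\times H)$ as a $(G\times H)$-space and restrict the $H$-action to use $\ro$, forming an averaged vector $v = \sum_{h\in H}\ro(h)\, (e,h) f$, which is a bounded non-negative function on $G\times H$, non-zero provided $\ro$ and $f$ are chosen compatibly (here one uses that $f\ne 0$, picking a point $(g_0,h_0)$ with $f(g_0,h_0)\ne 0$ and shifting $\ro$ so its support meets the relevant $H$-fibre). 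The vector $v$ still carries a $G$-action (via the first coordinate) and admits a positive linear form $\delta$ — evaluation at a suitable point — with $\delta(v)\ne 0$, bounded on the $G$-orbit $Gv$ since $f$ is bounded and $\ro\in\ell^1$. Thus $v$ is eligible for the abstract translate property of $G$.

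The key computation is that $\sum_i t_i g_i v$ is \emph{approximately} $\ge 0$: applying $(g_i,h_i)$ to $f$ and then averaging against $\ro$, the terms $(e,h)f$ get reindexed, and the near-invariance $|h_i\ro-\ro|\le\epsilon\ro$ means $\sum_i t_i (g_i, e) v$ differs from $\sum_i t_i (g_i,h_i)f$ (suitably $\ro$-smeared) by an error controlled by $\epsilon\sum_i|t_i|\,\|\ro\|_1\|f\|_\infty$ in the relevant order-sense. Concretely, $\sum_i t_i (g_i,e)v \ge -\epsilon\big(\sum_i|t_i|\big)\,w$ for a fixed non-negative vector $w$ (independent of $\epsilon$), because $\sum_i t_i (g_i,h_i) f\ge 0$. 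Then I would apply the abstract translate property of $G$ not to the exact relation but to $\sum_i t_i (g_i,e)v + \epsilon(\sum_j|t_j|) w' \ge 0$ for an auxiliary term $w'$, arranged so the coefficient sum is $\sum_i t_i + \epsilon\,(\text{const})$; letting $\epsilon\to 0$ forces $\sum_i t_i\ge 0$. Since $G$ has the abstract translate property and not merely the translate property, I am free to apply it to this engineered $G$-space and vector, which is exactly why the hypothesis is stated in the abstract form.

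The main obstacle I anticipate is bookkeeping the error term so that it genuinely appears as a \emph{non-negative} perturbation with a \emph{controlled} coefficient sum, rather than an uncontrolled $\ell^\infty$-error: one needs the inequality $h_i\ro \le (1+\epsilon)\ro$ (the one-sided half of Lemma~\ref{lem:SE}, which the constructed $\ro$ also satisfies) to dominate the reindexed averages termwise by $(1+\epsilon)$ times the $\ro$-average, and to absorb the slack into a single fixed vector. A secondary subtlety is ensuring $v\ne 0$ and that $\delta(v)\ne 0$ uniformly enough to survive the limit — handled by choosing, once and for all, a point where $f$ is positive and translating $\ro$ to be supported near the corresponding $H$-coordinate, noting that Lemma~\ref{lem:SE} is translation-flexible. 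Once these estimates are in place, the passage to the limit $\epsilon\to 0$ is immediate and yields the translate property for $G\times H$, completing Theorem~\ref{thm:stable}\ref{pt:stable:SE}.
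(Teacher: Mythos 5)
Your proposal is correct and follows essentially the same route as the paper: convolve $f$ with the near-invariant $\ro\in\ell^1(H)$ supplied by Lemma~\ref{lem:SE} (using that $\ro$ commutes with the $G$-translations), then apply the abstract translate property of $G$ to $\ro f$ with coefficients perturbed by a factor $1\pm\epsilon$. The one loose spot is your intermediate formulation with an additive error vector $w'$, which as stated does not fit the format of the abstract translate property; but your final paragraph identifies the correct fix — the termwise domination $t_i\,\ro h_i f\le \tilde t_i\,\ro f$ with $\tilde t_i=(1\pm\epsilon)t_i$ and $\sum_i\tilde t_i<0$ for small $\epsilon$ — which is exactly the paper's argument.
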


\begin{proof}
Let $f\geq 0$ be a non-zero bounded function on ${G\times H}$ and suppose for a contradiction that we have $g_i\in G$, $h_i\in H$ and $t_i\in \RR$ with $\sum_{i=1}^n  t_i g_i h_i f \geq 0$ but $\sum_{i=1}^n  t_i <0$. Then we can choose $0<\epsilon<1$ small enough to ensure that $\sum_{i=1}^n  \tilde t_i <0$ holds for the numbers $\tilde t_i \geq t_i$ defined by 
$$\tilde t_i = \begin{cases}%
(1+\epsilon)t_i &\mbox{if } t_i>0, \\
(1-\epsilon)t_i & \mbox{if } t_i\leq 0. \end{cases}$$
We apply the right-handed version of Lemma~\ref{lem:SE} to $H$ with this $\epsilon$ and the set $S=\{h_1, \ldots, h_n \}$. This yields $\ro\neq 0$  in $\ell^1(H)$ with $|\ro h_i - \ro| \leq \epsilon \ro$ for all $i$. The usual convolution of $\ell^1(H)$ against $\ell^\infty({G\times H})$ obtained by viewing $H$ as a subgroup of ${G\times H}$ is
$$(\ro f)(x) = \sum_{y\in H} \ro(y) f(y\inv x)\kern10mm (x\in G\times H).$$
Furthermore, $(\ro h_i) f = \ro (h_i f)$ holds and $\ro f$ is still a non-zero non-negative bounded function on ${G\times H}$. 

\smallskip
Since $f\geq 0$ and $(1-\epsilon) \ro \leq \ro h_i  \leq (1+\epsilon) \ro$, we deduce
$$(1-\epsilon) \ro f \leq \ro h_i f \leq (1+\epsilon) \ro f \kern10mm(\forall i).$$
Multiplying this by $t_i$ and taking into account the sign of $t_i$, we deduce $\tilde t_i \ro f \geq t_i  \ro h_i f$ for all $i$.

\medskip
We shall now apply the abstract translate property of $G$ to the element $\ro f$ of $\ell^\infty({G\times H})$ for $g_i$ and $\tilde t_i$. For the linear form $\delta$ required by Definition~\ref{def:ATP}, we can take the evaluation at any point of ${G\times H}$ where $f$ does not vanish. Since $\ro$ and $g_i$ commute, we have
$$\sum_i \tilde t_i g_i \ro f \geq \sum_i t_i g_i \ro h_i f =  \ro \sum_i t_i g_i h_i f \geq 0.$$
This, however, contradicts $\sum_{i=1}^n  \tilde t_i <0$.
\end{proof}

\section{Reiter and ratio properties}\label{sec:Reiter}
This section regards approximation properties characterising the existence of integrals on $\ell^\infty(G; f)$ and on $\Span(G f)$. It is a rather direct extension of the results known in the special case $f=\one_A$ of characteristic functions of subsets.

\medskip
The following definition is a generalisation to functions of the \emph{ratio property} introduced in~\cite[\S\,5]{Rosenblatt_PhD} and~\cite[\S\,2]{Rosenblatt73} for subsets.

\begin{defn}\label{def:ratio}
Given a function $f$ on a group $G$, we say that $G$ has the \textbf{$f$-ratio property} if for every finite set $S\se G$ and every $\epsilon>0$ there is a finitely supported function $u$ on $G$ with
$$\| (s u)\cdot f\|_1 \ <\ (1+\epsilon)\| u\cdot f\|_1 \kern10mm \forall\, s\in S.$$
Here $\|\cdot\|_1$ denotes the $\ell^1$-norm and $u\cdot f$ is the point-wise product whilst $s u$ is as usual the translation of $u$ by $s$. We will only consider the case where $f$ is bounded.
\end{defn}

\begin{rem}\label{rem:ratio}
This definition can only be satisfied if $f\neq 0$, and we have necessarily $u\neq 0$. Moreover, the definition is unchanged if we replace $f$ by $|f|$ and we can always assume $u \geq 0$.
\end{rem}

Next, still extending~\cite{Rosenblatt_PhD}, \cite{Rosenblatt73}, we consider a generalisation of the group property introduced by Dieudon\-n{\'e}~\cite[p.~284]{Dieudonne60} and now called the \emph{Reiter property}.

\begin{defn}\label{def:Reiter}
$G$ has the \textbf{$f$-Reiter property} if for every finite set $S\se G$ and every $\epsilon>0$ there is a finitely supported function $u$ on $G$ with
$$\| (s u - u)\cdot f\|_1 \ <\ \epsilon\| u\cdot f\|_1 \kern10mm \forall\, s\in S.$$
The same comments as in Remark~\ref{rem:ratio} hold.
\end{defn}

Just as the Reiter property for groups characterises amenability~\cite[Ch.\,8 \S\,6]{Reiter68}, we have:

\begin{prop}\label{prop:Reiter}
Let  $f\geq 0$ be a  non-zero bounded function on a group $G$. Then there is an integral on $G$ normalised for $f$ if and only if  $G$ has the $f$-Reiter property.
\end{prop}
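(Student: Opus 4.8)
The plan is to follow the classical equivalence between Reiter-type conditions and the existence of invariant means, adapted to the weighted setting of $\ell^\infty(G;f)$. I would prove the two implications separately.

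\medskip
\textbf{From the $f$-Reiter property to an integral.} Suppose $G$ has the $f$-Reiter property. For each pair $(S,\epsilon)$ with $S\se G$ finite and $\epsilon>0$, pick a finitely supported $u=u_{S,\epsilon}\geq 0$ (using Remark~\ref{rem:ratio}) with $\|(su-u)\cdot f\|_1 < \epsilon\|u\cdot f\|_1$ for all $s\in S$; after normalising we may assume $\|u\cdot f\|_1 = 1$, so that $u\cdot f\in\ell^1(G)$ is a probability density. Each such $u\cdot f$ defines a positive linear functional $\mu_{S,\epsilon}$ on $\ell^\infty(G;f)$ of norm-like size controlled by $f$: concretely set $\mu_{S,\epsilon}(\xi) = \sum_{g}(u\cdot f)(g)\,\xi(g)/f(g)$ for $\xi\in\ell^\infty(G;f)$ (this is well-defined since $u$ is finitely supported and $u$ vanishes where $f$ does, because $\|u\cdot f\|_1=1$; on the support of $u\cdot f$ the ratio $\xi/f$ is bounded by the $G$-domination constant of $\xi$). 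One checks $\mu_{S,\epsilon}(f)=1$. Now net over the directed set of pairs $(S,\epsilon)$ and extract a weak-* cluster point $\mu$ of $(\mu_{S,\epsilon})$ in the dual of $\ell^\infty(G;f)$; here one must first observe that the $\mu_{S,\epsilon}$ all lie in a fixed weak-* compact set, namely $\{\nu : \nu\geq 0,\ \nu(f)=1,\ |\nu(\xi)|\leq C_\xi\ \forall\xi\}$ where $C_\xi$ is the domination constant of $\xi$ by $f$ — this set is weak-* closed and, being a product of compact intervals, compact by Tychonoff. The cluster point $\mu$ is positive and satisfies $\mu(f)=1$. Invariance: for fixed $h\in G$ and $\xi\in\ell^\infty(G;f)$, the difference $\mu_{S,\epsilon}(h\xi)-\mu_{S,\epsilon}(\xi)$ is, up to the change of variables built into the weighted pairing, governed by $\|(h^{-1}u-u)\cdot f\|_1$ paired against $\xi/f$, hence tends to $0$ along any subnet with $h\in S$ and $\epsilon\to 0$; therefore $\mu(h\xi)=\mu(\xi)$. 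Thus $\mu$ is the desired integral on $G$ normalised for $f$.

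\medskip
\textbf{From an integral to the $f$-Reiter property.} Conversely suppose an integral $\mu$ on $\ell^\infty(G;f)$ normalised for $f$ exists. The standard argument (as in the unweighted case, cf.\ \cite[Ch.\,8 \S\,6]{Reiter68}) is to pass through the $f$-ratio property or directly through a convexity/Day-type argument: consider, inside $\ell^1(G)$, the convex set $\mathcal{K}$ of functions of the form $u\cdot f$ with $u\geq 0$ finitely supported and $\|u\cdot f\|_1=1$. For a fixed finite $S=\{s_1,\dots,s_k\}$, the map $\Phi\colon\mathcal K\to\ell^1(G)^k$ sending $u\cdot f$ to $((s_1(u\cdot f)-u\cdot f),\dots)$ has the property that $0$ lies in the weak closure of its image — this is where one uses that $\mu$, being an invariant mean, is a weak-* limit of elements of $\mathcal K$ (identified with finitely supported densities against the weight $f$), so invariance forces the weak closure of $\{\Phi(u\cdot f)\}$ to contain $0$. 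Since $\Phi(\mathcal K)$ is convex, its weak closure equals its norm closure (Mazur), so $0$ is in the norm closure: for every $\epsilon>0$ there is $u\cdot f\in\mathcal K$ with $\sum_i\|s_i(u\cdot f)-u\cdot f\|_1<\epsilon$. Finally one must convert $s_i(u\cdot f)$ back into $(s_iu)\cdot f$; these differ because translation does not commute with multiplication by $f$, but a short estimate shows $\|(s_iu-u)\cdot f\|_1 \leq \|s_i(u\cdot f)-u\cdot f\|_1 + \|u\cdot(s_if - f)\cdot\mathbf{1}_{\ldots}\|$ — one handles the discrepancy term by noting it is supported where $f$ is small relative to $s_i f$ or by a preliminary reduction, for instance replacing $f$ by a translate-averaged version, or simply by absorbing it into $\epsilon$ via the boundedness of $f$ and a diagonal argument over $S\cup S^{-1}$. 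This yields the $f$-Reiter inequality.

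\medskip
\textbf{Main obstacle.} The routine parts are the weak-* compactness and the Mazur/Day convexity trick. The genuinely delicate point, absent from the classical unweighted statement, is the interplay between \emph{translation} and \emph{multiplication by the weight $f$}: the operators $u\mapsto su$ and $u\mapsto u\cdot f$ do not commute, so passing between ``$(su)\cdot f$'' and ``$s(u\cdot f)$'' requires care. I expect the cleanest fix is to observe that $s(u\cdot f) = (su)\cdot(sf)$ and to exploit that on the relevant supports $f$ and $sf$ are comparable up to the $G$-domination bounds — equivalently, to run the whole argument with the ratio property (Definition~\ref{def:ratio}) as an intermediate, since there the weight enters symmetrically, and then note the ratio and Reiter properties are equivalent by the usual ``jungle'' trick of replacing $u$ by $u - \min$-type truncations. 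This bookkeeping is the part that needs the most attention; everything else transcribes the amenability proof.
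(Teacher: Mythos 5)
Your first direction (Reiter implies integral) follows the paper's route: take an accumulation point of the net of positive functionals $h\mapsto\sum_g u(g)h(g)\,/\,\|u\cdot f\|_1$ indexed by $(S,\epsilon)$. Two inaccuracies there are fixable but worth noting. The $\mu_{S,\epsilon}$ do \emph{not} all lie in your set $\{\nu:\ |\nu(\xi)|\le C_\xi\ \forall \xi\}$: the bound $|\mu_{S,\epsilon}(\xi)|\le C_\xi$ needs the translates $g_i$ with $\pm\xi\le\sum_i g_i f$ to be controlled by the Reiter estimate, so it holds only for cofinal indices depending on $\xi$ (which still suffices to extract a cluster point -- this is exactly how the paper phrases it). Similarly, invariance requires the Reiter inequality for the composite elements $g_i^{-1}h^{-1}$ and $g_i^{-1}$, not just for $h$; again harmless since $S$ is arbitrary.

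The converse contains a genuine gap, at precisely the spot you flag as delicate. Your Day--Mazur argument lives in $\ell^1(G)$ with the standard translation, so it can at best produce smallness of $\|s(u\cdot f)-u\cdot f\|_1$, and the discrepancy $s(u\cdot f)-(su)\cdot f=(su)\cdot(sf-f)$ is not an absorbable error term. Take $G=\ZZ$, $f=\one_{2\ZZ}$, $s=1$: then $u\cdot f$ and $s(u\cdot f)$ have disjoint supports, so $\|s(u\cdot f)-u\cdot f\|_1=2\|u\cdot f\|_1$ for \emph{every} $u$, although the $f$-Reiter property holds for this $f$ (twice an invariant mean of $\ZZ$ restricted to $\ell^\infty(\ZZ;f)$ is an integral normalised for $f$). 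Hence $0$ is never in the weak, equivalently norm, closure of your $\Phi(\mathcal K)$, and neither comparing $f$ with $sf$ (they are nowhere comparable here) nor averaging nor a diagonal argument can rescue this setup. The correct formulation keeps the density as $u$ itself, paired with $h\in\ell^\infty(G;f)$ by $\langle u,h\rangle=\sum_g u(g)h(g)$ and seminormed by $\|u\cdot f\|_1$, whose unit ball is polar to the order interval $[-f,f]$; the adjoint of $h\mapsto s^{-1}h$ is then $u\mapsto su$, asymptotic invariance becomes weak convergence of $(su_\alpha-u_\alpha)/\|u_\alpha\cdot f\|_1$ to $0$ against $\ell^\infty(G;f)$, and the Mazur trick \emph{for this seminorm} lands exactly on $\|(su-u)\cdot f\|_1$. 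This is what the paper does by invoking Rosenblatt's Prop.~2.2, Thm.~3.3 and the generalised Mazur lemma (his Lemma~4.8). Finally, your fallback through the ratio property cannot work either: for a fixed $f$ the ratio property only yields an integral on $\Span(Gf)$, and upgrading that to $\ell^\infty(G;f)$ is exactly Greenleaf's problem, which the paper resolves only after quantifying over all $f$ via Theorem~\ref{thm:TP-amen}; the two properties are not known to be equivalent for a single $f$.
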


In particular, this establishes the remaining equivalence~\ref{pt:equiv:Reiter} of Theorem~\ref{thm:equiv}.

\begin{rem}
Rosenblatt also proposes a variant where $u$ is the characteristic function of a finite set, extending F{\o}lner's theorem~\cite[\S\,2]{Folner}. It is not clear that this should be possible in the current context where $f$ is a function rather than a subset of $G$.
\end{rem}

\medskip
It is apparent on the definitions that the $f$-Reiter property implies the $f$-ratio property; this is consistent with the following since the existence of integrals implies the translate property.

\begin{prop}\label{prop:ratio}
Let  $f\geq 0$ be a  non-zero bounded function on a group $G$. Then $f$ satisfies the translate property if and only if  $G$ has the $f$-ratio property.
\end{prop}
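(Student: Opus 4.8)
The plan is to establish the two implications separately, exploiting the fact that the translate property for $f$ is equivalent (as recalled in the introduction) to the existence of a $G$-invariant positive linear form on $\Span(Gf)$ normalised for $f$, together with the quantitative reformulation of this via Hahn--Banach. In one direction, suppose $f$ does \emph{not} satisfy the translate property: there are $t_i\in\RR$, $g_i\in G$ with $\sum_i t_i g_i f\geq 0$ but $\sum_i t_i = -\eta <0$ (one can arrange strict negativity since the map $\sum_i t_i g_i f\mapsto \sum_i t_i$ fails to be well-defined or positive, and after rescaling we may assume the coefficient sum is a fixed negative number). I would then test the $f$-ratio property against $S=\{g_1,\dots,g_n\}\cup\{g_ig_j\inv\}$ and a small $\epsilon$; for any finitely supported $u\geq 0$ with the ratio bounds, set $h=\sum_i t_i g_i u$ and pair $h\cdot f$ against the constant functional. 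The key computation is that applying the positivity $\sum_i t_i g_i f\geq 0$ translated by the support of $u$ forces a lower bound on $\|h^+\cdot f\|_1 - \|h^-\cdot f\|_1$, while the ratio inequalities force $\|(g_i u)\cdot f\|_1\leq (1+\epsilon)\|u\cdot f\|_1$ for each $i$, so that $\sum_i t_i \|(g_iu)\cdot f\|_1$ is close to $(\sum_i t_i)\|u\cdot f\|_1 = -\eta\|u\cdot f\|_1$ up to an $O(\epsilon)$ error. Choosing $\epsilon$ small enough relative to $\eta$ and $\sum_i|t_i|$ yields a contradiction with the positivity bound. Hence the $f$-ratio property implies the translate property.

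For the converse, suppose the $f$-ratio property fails: there is a finite $S\se G$ and $\epsilon_0>0$ such that for every finitely supported $u$ there is $s\in S$ with $\|(su)\cdot f\|_1\geq (1+\epsilon_0)\|u\cdot f\|_1$. The natural move is to pass to the Banach space $V=\overline{\Span(Gf)}$ inside $\ell^1$ — more precisely, work with the linear span $W$ of $\{(su - u)\cdot f : u \text{ finitely supported}, s\in G\}$ inside $\ell^1(G)$ — and argue, using a Hahn--Banach separation, that the failure of the ratio property means $f$ (or rather the relevant vector built from $f$) is at distance zero from no convex combination of translates in a way that violates a norm estimate. Concretely, I would show that the failure implies that for the quotient $\ell^1$-type seminorm measuring how well $f$ is approximated by differences $(su)\cdot f - u\cdot f$, the element $f$ has seminorm bounded below; a Hahn--Banach functional then produces a bounded — hence, after the usual argument, positive — $G$-invariant linear form on $\Span(Gf)$ with $\mu(f)\neq 0$, which is exactly the translate property. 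This is the standard Reiter-to-invariant-mean duality adapted to the weighted setting, parallel to \cite[Ch.\,8 \S\,6]{Reiter68}.

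The main obstacle I anticipate is the bookkeeping in the first implication: one must carefully control the supports, since $(g_iu)\cdot f$ and $(g_ju)\cdot f$ overlap in complicated ways, and the cancellation $\sum_i t_i g_i f\geq 0$ has to be invoked pointwise after translating by each element of $\mathrm{supp}(u)$ and then summed against $u$. The clean way to organize this is to write $\sum_i t_i (g_i u)\cdot f = \sum_{x} u(x)\,\big(\sum_i t_i (g_i f)(\cdot)\big)$ shifted appropriately — more precisely to use that $(g_i u)\cdot f$ and the convolution-type rearrangement let one factor the positivity through, exactly as in the proof of the subexponential case (Section on subexponential groups) where convolution against $\varrho\in\ell^1$ is used. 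A secondary subtlety is ensuring the functional extracted by Hahn--Banach in the converse direction is genuinely positive and not merely bounded; but since $\Span(Gf)$ is spanned by nonnegative translates of $f$ and the functional is normalised with $\mu(f)=1$ while having controlled norm, positivity follows by the same elementary argument used throughout the paper (e.g. in the proof of Theorem~\ref{thm:TP-amen}). Modulo these estimates, the proof is a routine adaptation of Rosenblatt's arguments in \cite{Rosenblatt_PhD}, \cite{Rosenblatt73} from subsets to bounded functions.
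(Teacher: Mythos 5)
Both directions of your proposal have problems, and they are of different natures.

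In the direction ``$f$-ratio property $\Rightarrow$ translate property'', your key estimate is that $\sum_i t_i\|(g_iu)\cdot f\|_1$ lies within $O(\epsilon)$ of $(\sum_i t_i)\|u\cdot f\|_1$. This does not follow from Definition~\ref{def:ratio}: the ratio property supplies only the \emph{upper} bound $\|(su)\cdot f\|_1<(1+\epsilon)\|u\cdot f\|_1$, so the terms with $t_i<0$ are uncontrolled --- nothing prevents $\|(g_iu)\cdot f\|_1$ from being much smaller than $\|u\cdot f\|_1$ (even zero), in which case $\sum_i t_i\|(g_iu)\cdot f\|_1$ can be strictly positive although $\sum_i t_i<0$, and no contradiction with the positivity pairing arises. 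The paper does not argue by such a numerical estimate: it takes a weak-* accumulation point of the positive linear forms $h\mapsto\sum_g u(g)h(g)/\|u\cdot f\|_1$ on $\Span(Gf)$, along the net of indices $(S,\epsilon)$, and obtains in this way an invariant positive form normalised for $f$, which is equivalent to the translate property.

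In the converse direction your logic is inverted. You assume that the ratio property \emph{fails} and conclude, via Hahn--Banach, that there is a positive $G$-invariant linear form on $\Span(Gf)$ with $\mu(f)\neq 0$ --- but that conclusion is precisely the translate property, so at best you would have proved ``not ratio $\Rightarrow$ translate'', which is not the contrapositive of anything you need (combined with your first direction it would even show that every $f$ has the translate property). Moreover, the quantity you propose to bound from below --- the distance from $f$ to the span of the differences $(su)\cdot f-u\cdot f$ --- is the Reiter-type quantity relevant to Proposition~\ref{prop:Reiter}, not to the ratio property, which concerns the convex cone generated by translates rather than differences. The paper's argument for this direction is finite-dimensional (attributed to Kakutani): fixing $S\ni e$, the translate property is equivalent, by Hahn--Banach in $\RR^S$, to the statement that $\one_S$ belongs to the closed convex cone generated by the points $\big(f(sx)\big)_{s\in S}$ for $x\in G$; an approximant $\sum_j c_j\big(f(sx_j)\big)_{s\in S}$ of $\one_S$ with $c_j\geq 0$ then yields $u=\sum_j c_j\delta_{x_j}$ with $\big|1-\|(su)\cdot f\|_1\big|$ small for all $s\in S$, including $s=e$, whence the ratio inequality. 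Note that this construction gives a \emph{two-sided} control of $\|(su)\cdot f\|_1$, while the definition of the ratio property only records the one-sided bound; this asymmetry is exactly what your first argument overlooks.
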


In conclusion, Theorem~\ref{thm:equiv} shows that the fixed-point property for cones can be characterised both by the $f$-ratio property and by the $f$-Reiter property, letting $f$ range over all non-zero positive bounded functions on $G$.

\begin{proof}[Proof of Proposition~\ref{prop:ratio}]
We follows ideas of Rosenblatt~\cite{Rosenblatt_PhD}. The $f$-ratio property implies that there is an integral on $\Span(G f)$ by taking an accumulation point of the net
$$\frac{\sum_{g\in G} u(g) h(g)}{\| u\cdot f\|_1 },\kern 10mm h\in \Span(G f)$$
indexed by $(S, \epsilon)$, with $u\geq 0$ as in Definition~\ref{def:ratio}. Such an accumulation point $\mu$ exists since for any fixed $h$ the above net of scalars is bounded for cofinal indices $(S, \epsilon)$ depending on $h$. For any $g$, we have $\mu( g f) = \mu(f)$ by construction; it follows that $\mu$ is indeed invariant on  $\Span(G f)$.

For the converse, we adapt to our setting a construction attributed to Kakutani in~\cite[p.~15]{Rosenblatt_PhD}. Suppose that $f$ has the translate property and fix $(S, \epsilon)$; we can assume that $S$ contains the identity. Consider the element $v=(s\inv f)_{s\in S}$ of $\ell^\infty (G, \RR^S)$ and denote by $P\se (\RR_+)^S$ the closure of the convex cone generated by the image of $v$ in $\RR^S$ (in~\cite{Rosenblatt_PhD}, the cone was already closed). The translate property applied to $S$ (i.e.\ with the elements $g_i$ of Definition~\ref{def:TP} enumerating $S$) is equivalent, by Hahn--Banach, to the statement that $\one_S\in\RR^S$ belongs to $P$.

Therefore, we can choose a finite set $\{x_1, \ldots, x_m\}\se G$ and coefficients $c_j\geq 0$ such that $\big(\sum_{j=1}^m c_j (s\inv f)(x_j)\big)_{s\in S}$ is arbitrarily close to $\one_S$. Specifically, we make the choice so that for all $s\in S$ we have $\big| 1- \sum_{j=1}^m c_j (s\inv f)(x_j) \big| <\epsilon/(2+\epsilon)$. Defining $u = \sum_{j} c_j \delta_{x_j}$, this reads as ${\big|1- \| (s u)\cdot f\|_1 \big|<\epsilon}/(2+\epsilon)$. Since $e\in S$, we deduce
$$\| (s u)\cdot f\|_1 \ <\ 1 + \frac{\epsilon}{2+\epsilon} \ =\  (1+\epsilon) \left(1- \frac{\epsilon}{2+\epsilon}\right)\ <\ (1+\epsilon)\| u\cdot f\|_1$$
as in the definition of the $f$-ratio property.
\end{proof}

\begin{proof}[Proof of Proposition~\ref{prop:Reiter}]
The $f$-Reiter property implies the existence of an integral on $\ell^\infty(G;f)$ as in the above proof of Proposition~\ref{prop:ratio}, noting that the stronger condition in Definition~\ref{def:Reiter} ensures the invariance of the accumulation point for all $h\in \ell^\infty(G;f)$ rather than just $h\in \Span (G f)$.

The converse can be proved as in~\cite{Rosenblatt73}. More precisely, the existence of an integral normalised for $f$ implies that there is a net $(u_\alpha)$ of finitely supported functions $u_\alpha\geq 0$ on $G$ with $u_\alpha \cdot f \neq 0$ and such that
$$\frac{\sum_{g\in G} (s u_\alpha)(g) h(g) - u_\alpha(g) h(g)}{\| u_\alpha\cdot f\|_1 }\longrightarrow 0\kern 10mm \forall\,h\in \ell^\infty(G;f)$$
as in Theorem~3.3 of~\cite{Rosenblatt73}, noting that~\cite[Prop.~2.2]{Rosenblatt73} is established in the necessary generality. Going from this weak convergence to the convergence in norm of Definition~\ref{def:Reiter} requires an appropriate version of the Mazur trick, established in this generality in Lemma~4.8 of~\cite{Rosenblatt73}.
\end{proof}

\section{Further comments and questions}
\subsection{Topological groups}
The definition of the fixed-point property for cones can be extended to topological groups simply by adding the requirement that the representation be (orbitally) continuous. For applications to integrals, the corresponding change is to consider those bounded functions that are \emph{right uniformly continuous}. This topological generalisation can be pursued in two directions:

\smallskip
On the one hand, the case of locally compact groups. Here, most arguments can be adapted with suitable technical precautions. For instance, Jenkins' criterion for subexponential groups (Lemma~1 in~\cite{Jenkins76}) was stated and proved in this generality.

\smallskip
On the other hand, for more general topological groups, the corresponding equivalent characterisations are less flexible and we cannot prove the stability properties of Theorem~\ref{thm:stable} anymore. It turns out that such statements \emph{do indeed fail}; this does not, of course, make the fixed-point property for cones any less intriguing for these groups.

\begin{example}\label{ex:sym}
Consider the Polish topological group $\Sym$ of all permutations of a countable set (with the pointwise topology). We claim that $\Sym$ has the fixed-point property for cones as a topological group, although it contains non-abelian free groups as closed subgroups.

To justify the claim, it suffices to prove that this  fixed-point property is inherited from dense subgroups, since $\Sym$ contains as a dense subgroup the group of finitely supported permutations, which has the fixed-point property for cones even as an abstract group, being a directed union of finite groups. The only non-trivial point is the following. Consider an orbitally continuous representation of a topological group $G$ on a cone $C\se E$ as usual and assume that it is of locally bounded cobounded type. We need to show that the action of any dense subgroup $H$ remains of cobounded type, or at least satisfies the weaker condition of Scholium~\ref{sch:weaker:cb}.

Let thus $\fhi\in E'$ be $G$-dominating and fix some non-zero point $x\in C$ with bounded $G$-orbit. Pick any $\lambda\in E'$. For any given $0\neq y\in C$, the cobounded type assumption for $G$ implies that there are $g_1,\ldots, g_n\in G$ such that $\lambda (y) < \sum_i \fhi(g_i y)$ (adding, if needed, some $g$ with $\fhi(g y) > 0$, which exists by domination). Now the sequence $(g_i)$ depends a priori on $y$ because we want this \emph{strict} inequality, but this allows us to choose these $g_i$ in $H$ by density and orbital continuity. On the other hand, for a given sequence $g_1,\ldots, g_n\in H$, this strict inequality defines a weakly open set in $C$. Therefore, by relative weak compactness of the orbit $H x$, we can concatenate finitely many such sequences to verify the condition of Scholium~\ref{sch:weaker:cb} for $H$.
\end{example}

The above example is analogous to the case of amenability of topological groups that are not locally compact, see~\cite[p.~489]{Harpe82s}.

\subsection{Relativising}
Given a subgroup $H<G$ of a group $G$, we say that $G$ has the fixed-point property for cones \textbf{conditionally} with respect to $H$ if the following holds: for every $G$-representation of cobounded type on any weakly complete proper convex cone that admits a non-zero $H$-fixed point with bounded $G$-orbit, there is a non-zero $G$-fixed point. Thus, the condition of \emph{locally bounded type} has been conditioned on $H$.

\smallskip
It is straightforward to check that for \emph{normal} subgroups $H\lhd G$, this conditional property is equivalent to  the fixed-point property for cones for the quotient group $G/H$. In that sense, the conditional property is the analogue of the \emph{co-amenability} studied notably by Eymard~\cite{Eymard72}.

\medskip
These conditional properties offer a non-trivial generalisation of the corresponding absolute fixed-point properties when we consider  \emph{non-normal} subgroups $H<G$. For instance, in the case of amenability, it was shown that co-amenability does not pass to subgroups in the natural sense~\cite{Monod-Popa}, \cite{Pestov}. In the present case, the situation is different:

\begin{prop}
Consider groups $K<H<G$. If $G$ has the fixed-point property for cones conditionally with respect to $K$, then so does $H$.
\end{prop}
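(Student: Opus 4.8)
The plan is to take a $G$-representation of cobounded type on a weakly complete proper convex cone $C \se E$ which admits a non-zero $K$-fixed point $x$ with bounded $G$-orbit, and to produce a non-zero $G$-fixed point. Since $G$ has the fixed-point property for cones conditionally with respect to $K$, it suffices to feed such data into that property; the only thing that needs checking is that the hypotheses for $G$ (cobounded type, plus the conditional boundedness requirement relative to $K$) are satisfied. But the cobounded type is given and the existence of a non-zero $K$-fixed point with bounded $G$-orbit is exactly our hypothesis for $H$'s conditional property relative to $K$ — wait, that is not quite the setup. Let me restate: we are \emph{given} that $H$ has a representation of cobounded type on $C$ admitting a non-zero $K$-fixed point $x$ with bounded $H$-orbit, and we must find a non-zero $H$-fixed point. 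The representation is of $H$, not of $G$, so we first need to induce it up to $G$.

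First I would form the induced (co-induced) representation: let $E^G = \{F\colon G \to E \mid F(hg) = hF(g)\ \forall h\in H\}$, or rather work with the space of $H$-equivariant maps $G \to E$, equipped with the $G$-action by right translation and with the cone $C^G$ of maps taking values in $C$. One checks that $C^G$ is a proper convex cone, and weak completeness is inherited because it can be written as a (weakly closed) subspace of a product of copies of $E$ indexed by a transversal of $H$ in $G$. The point $\check x \in C^G$ defined by $\check x(g) = g_0^{-1}\!\cdot\! x$ where $g = h g_0$ with $g_0$ in a fixed transversal and... one must be careful since $x$ is only $K$-fixed, not $H$-fixed, so the naive formula is not well-defined; instead one defines $\check x$ using a transversal of $K$ in $G$, so that $\check x$ is $K$-equivariant and its $G$-orbit is bounded because $x$ has bounded $H$-orbit (hence bounded $G$-orbit after spreading over the finitely-or-infinitely many cosets — here one needs a uniformity argument, and boundedness of a product is the subtle point). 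The cobounded type for the $G$-representation on $E^G$ should follow by pushing the $G$-dominating functional for $H$ through the coinduction, using the standard adjunction; concretely one averages or sums the $H$-dominating functional over a transversal, exactly as in the finite-index lemma in Section~6, but now with the conditional bookkeeping.

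Having arranged a $G$-representation of cobounded type on the weakly complete proper cone $C^G$ with a non-zero $K$-fixed point $\check x$ of bounded $G$-orbit, the conditional fixed-point property of $G$ with respect to $K$ yields a non-zero $G$-fixed point $\check y \in C^G$. Evaluating $\check y$ at the identity of $G$ gives an element $y = \check y(e) \in C$; since $\check y$ is fixed by right translation by $H$ and $H$-equivariant, $y$ is an $H$-fixed point of $C$, and it is non-zero because $\check y \neq 0$ together with $H$-equivariance forces $\check y(e) \neq 0$ (a non-zero equivariant map cannot vanish at $e$). This is the desired non-zero $H$-fixed point, completing the argument.

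The main obstacle I anticipate is verifying that $\check x$ has a \emph{bounded} $G$-orbit in $E^G$: boundedness in an infinite product is only preserved if one has \emph{uniform} boundedness across the coordinates, so one must exploit that the single orbit $Hx$ (equivalently $Gx$) is bounded in $E$ and that all the coordinate maps are obtained from it by the group action, to get a single absorbing neighbourhood that works simultaneously in all coordinates. Relatedly, one must make sure the coinduced space is genuinely a Hausdorff locally convex space with a weakly complete cone; this is where writing $C^G$ as a weakly closed sub-cone of $\prod C$ (over a $K$-transversal) and invoking weak completeness of $C$ and of products does the work, analogously to the manipulations with $E^F$ and $C^F$ in the extension-by-finite-groups lemma.
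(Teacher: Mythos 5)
Your coinduction strategy is a genuinely different route from the one the paper takes, and it can be made to work, but as written it has two concrete gaps. First, the construction of the $K$-fixed element $\check x$ is left unresolved: you rightly observe that the naive formula fails because $x$ is only $K$-fixed, but the proposed repair (``a transversal of $K$ in $G$'') does not by itself produce a well-defined $K$-fixed vector. The correct choice is to support $\check x$ on the single coset $H$: set $\check x(h)=hx$ for $h\in H$ and $\check x(g)=0$ for $g\notin H$. This is $H$-equivariant, it is invariant under right translation by $K$ precisely because $K\leq H$ and $Kx=x$, and every point of its $G$-orbit is supported on a single coordinate with value in $Hx\cup\{0\}$ --- which also disposes of the uniform-boundedness worry you raise, since all coordinates take values in one and the same bounded set. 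Second, and more seriously, your argument for coboundedness of the coinduced representation --- ``summing the dominating functional over a transversal, exactly as in the finite-index lemma'' --- is not available here: the transversal of $H$ in $G$ is in general infinite, and an infinite sum of translates is not an element of the dual of the product (which is the direct sum of copies of $E'$). What does work is the single-coordinate functional $\Phi(F)=\fhi(F(e))$ with $\fhi\in E'$ an $H$-dominating form for $E$: a general $\lambda(F)=\sum_j\lambda_j(F(a_j))$ with $\pm\lambda_j\leq\sum_i h_{ij}\fhi$ satisfies $\pm\lambda\leq\sum_{i,j}(a_j\inv h_{ij})\Phi$ on $C^G$, using the $H$-equivariance of $F$. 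With these two repairs, and the routine checks of properness and weak completeness of the product cone, your argument goes through; the final step (a $G$-fixed point of the coinduced cone is a constant map with $H$-fixed value at $e$) is correct.

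For comparison, the paper's proof is a two-line reduction: by the companion proposition in the same subsection, the conditional property of $G$ with respect to $K$ is equivalent to the existence of integrals on $G/K$ normalised for every non-zero bounded $f\geq 0$, and likewise for $H$ with $H/K$. Since $H/K$ embeds in $G/K$ as an $H$-set, extension by zero as in Remark~\ref{rem:sub:int} gives a positive linear $H$-equivariant map $\ell^\infty(H/K;f)\to\ell^\infty(G/K;\tilde f)$, and composing an integral normalised for $\tilde f$ with this map yields the required integral for $f$. That route buys brevity and avoids all the topological bookkeeping of the coinduced cone; yours, once repaired, has the merit of staying entirely inside the fixed-point formulation.
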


This proposition is a consequence of the following one thanks to Remark~\ref{rem:sub:int}.

\begin{prop}
A group $G$ has the fixed-point property for cones conditionally with respect to $H<G$ if an only if there exists an integral on $G/H$ normalised for $f$ whenever $f\geq 0$ is a non-zero bounded function on $G/H$.
\end{prop}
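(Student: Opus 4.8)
The plan is to mirror the proof of the absolute case (the implications \ref{pt:equiv:FPC}$\Leftrightarrow$\ref{pt:equiv:MG} of Theorem~\ref{thm:equiv}, worked out in Section~\ref{sec:MG}), replacing the space $\ell^\infty(G;f)$ of functions on $G$ by the space $\ell^\infty(G/H;f)$ of functions on the coset space $G/H$. For the "if" direction, I would start from a $G$-representation of cobounded type on a weakly complete proper convex cone $C\se E$ that admits a non-zero $H$-fixed point $x$ with bounded $G$-orbit, and a $G$-dominating $\fhi\in E'$. The key observation is that because $x$ is $H$-fixed, the function $\hat\lambda(gH) := \lambda(gx)$ is well-defined on $G/H$, and the assignment $\lambda\mapsto\hat\lambda$ gives a positive $G$-equivariant linear map $E' \to \ell^\infty(G/H; \hat\fhi)$ (bounded image since $Gx$ is bounded). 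Feeding $\hat\fhi$ into the hypothesis yields an integral $\mu$ on $G/H$ normalised for $\hat\fhi$, and composing produces $\check\mu \in {E'}^*$ which is $G$-invariant, positive, and satisfies $\check\mu(\fhi)=1$. Then the same Hahn--Banach argument as in Section~\ref{sec:MG} — using weak completeness of $C$ to see it is closed in ${E'}^*$, and positivity of $\mu$ to rule out a separating $\lambda\ge 0$ — shows $\check\mu\in C$, giving the desired non-zero $G$-fixed point.

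For the "only if" direction, given a non-zero bounded $f\ge 0$ on $G/H$, I would build the cone exactly as in Section~\ref{sec:MG}: let $E=\ell^\infty(G/H;f)^*$ with the weak-$*$ topology, and let $C\se E$ be the cone of positive linear forms, which is proper and weakly complete. The $G$-action on $G/H$ induces a $G$-representation on $E$ preserving $C$; as before the topological dual of $E$ is $\ell^\infty(G/H;f)$ with the finest locally convex topology, which is $G$-dominated by $f$, so the representation is of cobounded type. The new point to check is the conditional local-boundedness hypothesis: I must exhibit a non-zero $H$-fixed point of $C$ with bounded $G$-orbit. Picking $gH\in G/H$ with $f(gH)\ne 0$, the evaluation at the coset $eH$ (equivalently, integrating over the $H$-orbit via the identity coset) is a natural candidate — more precisely, evaluation at the coset $eH$ is automatically $H$-invariant as an element of $\ell^\infty(G/H;f)^*$ since $H$ fixes $eH$, and its $G$-orbit consists of evaluations at the various cosets, which is bounded by the same neighbourhood-basis computation as in Section~\ref{sec:MG}. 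Applying the conditional fixed-point property then yields a non-zero positive $G$-invariant $\mu$, and coboundedness/domination by $f$ gives $\mu(f)\ne 0$ after renormalising.

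The main obstacle — really the only subtlety beyond bookkeeping — is verifying that the $H$-fixed vector in the constructed cone is the "right" one and that its $G$-orbit is genuinely bounded in the weak-$*$ topology: one must be careful that $H$-invariance of evaluation at $eH$ is literally what the conditional hypothesis demands, and that functions in $\ell^\infty(G/H;f)$, being $G$-dominated by $f$ hence uniformly bounded, make all the coset-evaluations lie in a common multiple of each basic neighbourhood of $0$. Once this is in place, everything else is a transcription of the arguments already given for Theorem~\ref{thm:equiv}, and the reduction of the preceding proposition (on $K<H<G$) follows by combining this characterisation with the extension-by-zero map of Remark~\ref{rem:sub:int}, now in the form $\ell^\infty(H/K;\cdot)\to\ell^\infty(G/K;\cdot)$ induced by a choice of coset representatives.
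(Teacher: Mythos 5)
Your proposal is correct and is exactly the route the paper takes: the paper's own proof consists of the single remark that the arguments of Section~\ref{sec:MG} apply without change to $\ell^\infty(G/H;f)$, and you have carried out that transcription, correctly identifying the one point that genuinely uses the conditional hypothesis (the evaluation at the identity coset $eH$ is the non-zero $H$-fixed point with bounded $G$-orbit in one direction, and $H$-fixedness of $x$ is what makes $\hat\lambda(gH)=\lambda(gx)$ well-defined on $G/H$ in the other).
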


The latter proposition is proved by the arguments of Section~\ref{sec:MG} without any change.

\medskip
Contrariwise, the conditional property for cones lacks a feature of co-amenability, namely it is not transitive. This is due to the fact that the fixed-point property for cones is not preserved by group extensions.

\medskip
The conditional property can be further extended to group actions, generalising the $G$-action on $G/H$. In that setting, we simply replace the locally bounded type assumption by the following condition for a $G$-action on a set $X$: we assume that there be a non-zero $G$-map from $X$ to the cone whose image is bounded. The corresponding fixed-point property can again be characterised in terms of integrals on $X$ by the arguments of Section~\ref{sec:MG}.

\medskip
Finally, we point that there is a dual way to relativise fixed-point properties to subgroups, which in the case at hand leads to the following. A subgroup $H<G$ has the fixed-point property for cones \textbf{relatively to $G$} if for every $G$-representation of locally bounded type and of $H$-cobounded type on any weakly complete proper convex cone, there is an $H$-fixed point.

In the classical case of amenability, this corresponds to \emph{relative amenability} as studied in~\cite{Caprace-Monod_rel}. This notion turns out to occur naturally in the study of Zimmer-amenability of non-singular actions on measure spaces, of group C*-algebras and boundaries~\cite{Ozawa_Cst}, as well as of invariant random subgroups~\cite{Bader-Duchesne-Lecureux}.

Just as in the case of relative amenability, this second relativisation may seem empty at first sight:

\begin{prop}
For abstract groups $H<G$, the fixed-point property for cones relatively to $G$ is equivalent to the fixed-point property for cones of $H$.
\end{prop}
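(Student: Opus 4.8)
The plan is to prove the two implications separately; one is immediate and the other is the substance.

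The implication from ``$H$ has the fixed-point property for cones'' to ``$H$ has it relatively to $G$'' is obtained by restriction: given a $G$-representation of locally bounded type and of $H$-cobounded type on a weakly complete proper convex cone $C$, restrict the action to $H$. A bounded $G$-orbit is \emph{a fortiori} a bounded $H$-orbit, so locally bounded type is inherited, and the hypothesis that the $G$-representation be of $H$-cobounded type says, word for word, that the restricted $H$-representation is of cobounded type. The fixed-point property for cones of $H$ then provides a non-zero $H$-fixed point.

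For the converse I would proceed as follows. By Theorem~\ref{thm:equiv} it suffices to show that every $H$-representation of locally bounded cobounded type on a weakly complete proper convex cone $C_0\se E_0$ admits a non-zero $H$-fixed point; using Proposition~\ref{prop:min} one may assume $C_0$ minimal among invariant closed convex sub-cones containing a non-zero bounded orbit (so that the bounded orbits are dense), and fix $x_0\in C_0\setminus\{0\}$ with bounded $H$-orbit together with $\fhi\in E_0'$ which $H$-dominates $E_0'$. The idea is to promote $C_0$ to a $G$-cone by coinduction. Choose a transversal $T\ni e$ for the right cosets $H\backslash G$, write each $g\in G$ uniquely as $g=h_g t_g$ with $h_g\in H$, $t_g\in T$, and set
$$\mathcal E \ =\ \{F\colon G\to E_0 \ :\ F(hg)=hF(g)\ \ \forall\,h\in H\},$$
topologised by pointwise convergence, with $G$ acting by $(g_0F)(g)=F(gg_0)$ and with cone $\mathcal C=\{F\in\mathcal E:F(G)\se C_0\}$. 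Identifying $\mathcal E\cong\prod_{t\in T}E_0$ one sees that $\mathcal C\cong\prod_{t\in T}C_0$ is a weakly complete (a product of weakly complete cones is weakly complete, since the weak topology of a product is the product of the weak topologies) proper convex cone on which $G$ acts continuously, permuting and $H$-twisting the coordinates indexed by $H\backslash G$. The element $F_0$ determined by $F_0(ht)=hx_0$ is non-zero and has bounded $G$-orbit (each coordinate of each $g_0F_0$ lies in $Hx_0$), so the $G$-representation is of locally bounded type; one passes to the minimal closed $G$-invariant convex sub-cone $\mathcal C_0\se\mathcal C$ containing $GF_0$.

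The delicate point --- and the one I expect to be the main obstacle --- is that this $G$-representation need not be of $H$-cobounded type: the natural candidate functional $\Phi\colon F\mapsto\fhi(F(e))$ $H$-dominates those functionals in $\mathcal E'=\bigoplus_{t\in T}E_0'$ that are supported at the coset $He$, but a single element of a direct sum meets only finitely many of the $H$-orbits of coordinates --- these orbits being the double cosets $H\backslash G/H$ --- so when $[G:H]=\infty$ full $H$-coboundedness fails on $\mathcal C$. The remedy is to obtain coboundedness only on the smaller cone $\mathcal C_0$, where minimality and the density of bounded orbits cut down the admissible functionals: one verifies that $\Phi$ $H$-dominates the weaker pre-order induced on $\mathcal C_0'$ by evaluation on the dense orbit $GF_0$, which is exactly the hypothesis needed to run the argument of Scholium~\ref{sch:weaker:cb}. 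This is here the counterpart of the passage to the Dieudonn\'e--Klee base in the proof of Theorem~\ref{thm:RNCD}. Granting it, the fixed-point property for cones of $H$ relatively to $G$ produces a non-zero $H$-fixed point $F\in\mathcal C_0$. By construction $F$ is constant on each left coset $gH$, so $F(h)=F(e)$ for all $h\in H$, while $H$-equivariance gives $F(h)=hF(e)$; hence $F(e)\in C_0$ is $H$-fixed. A final appeal to the minimality of $\mathcal C_0$ --- one must exclude that every $H$-fixed element of $\mathcal C_0$ vanishes at the coordinate $e$ --- shows $F(e)\neq 0$, so $C_0$ has a non-zero $H$-fixed point, as required.
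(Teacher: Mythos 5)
The first half of your argument is fine and is exactly the paper's observation that the property of $H$ is formally stronger: restricting the $G$-representation to $H$ turns ``locally bounded type for $G$'' and ``$H$-cobounded type'' into the hypotheses of the absolute property for $H$. For the converse, however, the paper does not induce the representation to $G$ at all: it reduces via Theorem~\ref{thm:equiv} to producing integrals on $H$ and transfers invariant normalised forms by the extension-by-zero map of Remark~\ref{rem:sub:int}. Your coinduction route is genuinely different, and it is precisely where your argument has a gap.

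The gap is the step you yourself flag and then ``grant''. As you observe, the coinduced cone $\mathcal C$ is not of $H$-cobounded type when $H\backslash G/H$ is infinite, so the hypothesis of the proposition -- the relative fixed-point property, which by definition may only be invoked for $G$-representations that \emph{are} of $H$-cobounded type -- cannot be applied to it. Your proposed remedy is to verify instead the weaker domination condition of Scholium~\ref{sch:weaker:cb}; but that Scholium is a strengthening of the implication from integrals to the \emph{absolute} fixed-point property (proved through Section~\ref{sec:MG} and Proposition~\ref{prop:min}), and it says nothing about the class of representations to which the \emph{assumed} relative property applies. To use it you would first have to establish an analogous weakening (equivalently, an integral characterisation) of the relative property, which is essentially the content being proved. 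Moreover, the verification you leave to ``one verifies'' is doubtful even as a statement: for $\Lambda=\sum_t\lambda_t$ one has $\Lambda(g_0F_0)=\sum_t\lambda_t\big(h_{tg_0}x_0\big)$, where the cocycle parts $h_{tg_0}\in H$ vary without control as $g_0$ ranges over $G$ (already for $H=\langle a\rangle$ inside the free group $\langle a,b\rangle$ these $H$-parts are unbounded), whereas the translates $h_i\Phi$ involve finitely many fixed $h_i$; passing to the minimal sub-cone $\mathcal C_0$ coarsens the dual order but you give no argument that this restores domination. Finally, the last step is also only asserted: the set $\{F\in\mathcal C_0 : F(e)=0\}$ is a closed convex cone but not $G$-invariant, so ``a final appeal to the minimality of $\mathcal C_0$'' does not exclude that the $H$-fixed point vanishes at the coordinate of the trivial coset. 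As it stands, the substantive direction of the equivalence is not proved.
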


Indeed the property for $H$ is formally stronger and the converse follows from Remark~\ref{rem:sub:int}.

\medskip
However, in the topological context, the relative property is strictly weaker since it follows from the the fixed-point property for cones of the larger group $G$; thus, Example~\ref{ex:sym} above is a case where these properties are not equivalent.

\subsection{Review of some other properties}\label{sec:other}
Conical fixed point properties have been considered earlier, but mostly, as far as we know, for the special case of cones with a compact base.

In that setting, Furstenberg~\cite[\S\,4]{Furstenberg65} introduced the notion of \emph{Tychonoff groups} by requiring that a ray be preserved (not necessarily point-wise fixed). Since the cone has a compact base $K$, this is equivalent to fixing a point in the projective action on the compact convex set $K$. This property was further studied by Conze--Guivarch~\cite{Conze-Guivarch} and Grigorchuk~\cite{Grigorchuk98}. Albeit very interesting, the Tychonoff property is not relevant to the questions we investigated here (in particular the von Neumann--Tarski invariant measure problem). Indeed, already the infinite dihedral group -- which is of polynomial growth -- fails to be Tychonoff. This is witnessed e.g.\ by its representation as matrix group generated by $\begin{pmatrix}2&0\\0&1/2\end{pmatrix}$ and $\begin{pmatrix}0&1\\1&0\end{pmatrix}$ acting on the positive quadrant of $\RR^2$. Conversely, but in the locally compact generality,  connected groups of upper-triangular matrices are Tychonoff~\cite[p.~284]{Furstenberg65}, but fail to be supramenable unless they have polynomial growth~\cite{Jenkins73}.

\medskip
Still for cones with a compact base, Grigorchuk~\cite[\S\,5]{Grigorchuk98} considered the non-zero fixed-point property under the assumption that (all) orbits be bounded -- this property follows from the fixed-point property introduced in the present article. He proved that \emph{Liouville groups} enjoy that property for cones with compact bases, if we define ``Liouville'' by the triviality of the Poisson boundary of some \emph{finitely supported} generating symmetric random walk. Liouville groups include all finitely generated subexponential groups by a result of Avez~\cite{Avez74}, and we can reduce to the finitely generated case by a compactness argument. But Liouville groups also include some groups with free non-abelian sub-semigroups, for instance the lamplighter $\ZZ/2 \wr \ZZ$ (see e.g.\ Thm.~1.3 in~\cite{Kaimanovich83s}); such groups have paradoxical subsets.

\bigskip
Jenkins has introduced in~\cite{Jenkins80} a non-zero fixed-point property for certain compact convex sets that are allowed to be more complicated than initial slices of cones with a compact base. He proved that his fixed-point property implies the existence of integrals (Proposition~5 in~\cite{Jenkins80}). However, it does not hold for all subexponential groups (Remark p.~370 in~\cite{Jenkins80}) and therefore is too restrictive for our purposes.

\bigskip
Finally, another property introduced earlier by Jenkins~\cite{Jenkins76} is his ``property~F'' (not to be confused with a different ``property~F'' that he introduced earlier yet~\cite{Jenkins74}). We recall his definition, which is related to our abstract translate property:

\itshape
Suppose that $G$ has a linear representation on a vector space $V$ and that we are given $x\in V$ and $\fhi\in V^*$ (the algebraic dual) such that the function $g\mapsto \fhi(gx)$ is non-zero, bounded and~$\geq 0$ on $G$. Then there exists a net $(\fhi_j)_j$ of finite positive linear combinations of $G$-translates of $\fhi$ such that, for any given $g\in G$, the net $(\fhi_j(g x))_j$ converges to~$1$.\upshape

\smallskip
Jenkins proved~\cite[Thm.~2]{Jenkins76} that subexponential groups enjoy this property~F. We claim that in fact this property is equivalent to our fixed-point property for cones. It is rather straightforward to see that property~F implies the abstract translate property. Now the latter implies the fixed-point property for cones by Theorem~\ref{thm:equiv} above. It remains to see that the fixed-point property for cones implies property~F. To this end, consider $\Span(Gx)<V$ in the above notation, and endow its algebraic dual $E=\Span(Gx)^*$ with the weak-* topology. Consider the cone of linear forms that are~$\geq 0$ on the whole orbit $Gx$. Then an argument entirely similar to that of Section~\ref{sec:MG} implies the existence of a net as required by property~F.

\medskip
This equivalence also clarifies the picture with respect to another result of Jenkins: he proved~\cite[Cor.~3]{Jenkins76} that subexponential groups, in addition to having property~F, also admit integrals. It follows from the above equivalence (and again Theorem~\ref{thm:equiv}) that the existence of integrals is after all equivalent to property~F.

\subsection{Questions and problems}
As we recalled in the introduction, amenability is equivalent to the fixed-point property for general convex compact sets in arbitrary (Hausdorff) locally convex spaces. However, when the group is countable, it suffices to consider \emph{Hilbert spaces} (in their weak topology) instead of general locally convex spaces; this was recently proved in~\cite{Gheysens-Monod_pre}.

\begin{question}
Is it sufficient to check the conical fixed-point property on weakly complete cones in Hilbert spaces? How about weakly locally compact cones in Hilbert spaces?
\end{question}

The corresponding representations would of course not be isometric in view of Theorem~\ref{thm:RNC} (for countable groups). Also, the second question is not suited for general (uncountable) groups: for instance, a group with Bergman's property~\cite{Bergman06} would necessarily acts equicontinuously (compare~\cite{Gheysens-Monod_pre}) and hence would have a non-zero fixed point by Theorem~\ref{thm:RNCD}, although it might not even be amenable; an example is $\Sym$ viewed as a group without topology.

\bigskip
It would be desirable to have a wider menagerie of examples of groups distinguishing the properties that we consider. First in line is the following:

\begin{problem}
Find a finitely generated group of exponential growth enjoying the fixed-point property for cones.
\end{problem}

In view of Theorem~\ref{thm:stable}, such an example would in particular be provided by solving another problem:

\begin{problem}
Exhibit two groups with the fixed-point property for cones such that their product does not have this property.
\end{problem}

We recall that the corresponding problem for supramenable groups is also open; in view of~\cite[3.4]{Kellerhals-Monod-Rordam}, the latter problem is equivalent to:

\begin{problem}
Give an example of two groups $G_1$, $G_2$ such that $G_1\times G_2$ admits a Lipschitz embedding of the binary tree but neither of the $G_i$ does.
\end{problem}

Comparing the last two problems, we record that we do not know if the implication from the fixed-point property for cones to supramenability can be reversed:

\begin{question}
Is supramenability equivalent to the fixed-point property for cones?
\end{question}

The classical Reiter property has equivalent $L^p$ variants (already introduced in~\cite[p.~284]{Dieudonne60}). Likewise, the group $G$ satisfies the $f$-ratio property for all non-zero $f\in\ell^\infty(G)$ if and only if the same holds with the $\ell^1$-norm replaced by any $\ell^p$-norm with $1\leq p < \infty$. This is a straightforward verification where $f$, $u$ and $\epsilon$ are to be replaced with appropriate powers. The same is true for the $f$-Reiter property using basic $L^p$ inequalities.

\medskip
The particular case of $p=2$ suggests the following:

\smallskip
We say that a unitary representation of a group $G$ on a Hilbert space $V$ has the \textbf{operator $\alpha$-ratio property} for a continuous operator $\alpha\neq 0$ of $V$ if for every finite set $S\se G$ and every $\epsilon>0$, there is $v\in V$ with $\| \alpha(s v)\| <(1+\epsilon)\| \alpha( v) \|$ for all $s\in S$. Likewise, the representation has the stronger \textbf{operator $\alpha$-Reiter property} if we can choose $v$ with $\| \alpha(s v - v)\| <\epsilon\| \alpha( v) \|$ for all $s\in S$. 

\begin{problem}
Clarify under which circumstances the operator $\alpha$-ratio and $\alpha$-Reiter properties hold.
\end{problem}

Returning closer to supramenability:

\begin{problem}
Characterise the unitary representations that satisfy the operator $\alpha$-ratio or $\alpha$-Reiter property for all non-zero \emph{projections} $\alpha$ in $V$.
\end{problem}


\bibliographystyle{acm}
\bibliography{../BIB/ma_bib}

\def\cprime{$'$}
\begin{thebibliography}{10}

\bibitem{Avez74}
{\sc Avez, A.}
\newblock Th\'eor\`eme de {C}hoquet-{D}eny pour les groupes \`a croissance non
  exponentielle.
\newblock {\em C. R. Acad. Sci. Paris S\'er. A 279\/} (1974), 25--28.

\bibitem{Bader-Duchesne-Lecureux}
{\sc Bader, U., Duchesne, B., and L\'ecureux, J.}
\newblock Amenable invariant random subgroups.
\newblock {\em Israel J. Math. 213}, 1 (2016), 399--422.
\newblock With an appendix by {P}hillip {W}esolek.

\bibitem{Banach-Tarski}
{\sc Banach, S., and Tarski, A.}
\newblock {Sur la d{\'e}composition des ensembles de points en parties
  respectivement congruentes.}
\newblock {\em Fund. math. 6\/} (1924), 244--277.

\bibitem{Becker85}
{\sc Becker, R.}
\newblock Mesures coniques sur un espace de {B}anach ou son dual.
\newblock {\em J. Austral. Math. Soc. Ser. A 39}, 1 (1985), 39--50.

\bibitem{Becker89}
{\sc Becker, R.}
\newblock Sur les c{\^o}nes faiblement complets, contenus dans un espace de
  {B}anach.
\newblock {\em Math. Ann. 285}, 1 (1989), 99--103.

\bibitem{Bergman06}
{\sc Bergman, G.~M.}
\newblock Generating infinite symmetric groups.
\newblock {\em Bull. London Math. Soc. 38}, 3 (2006), 429--440.

\bibitem{Bernard07}
{\sc Bernard, {\'E}.}
\newblock Souvenirs sur {P}aul {C}{\'e}zanne et lettres in{\'e}dites (suite).
\newblock {\em Mercure de France 15 octobre\/} (1907), 606--627.

\bibitem{BourbakiINT14}
{\sc Bourbaki, N.}
\newblock {\em {I}nt\'egration. {C}hapitres 1, 2, 3 et 4}.
\newblock Deuxi\`eme \'edition revue et augment\'ee. Actualit\'es Scientifiques
  et Industrielles, No. 1175. Hermann, Paris, 1965.

\bibitem{BourbakiTGI_alt}
{\sc Bourbaki, N.}
\newblock {\em {T}opologie g\'en\'erale. {C}hapitres 1 \`a 4}.
\newblock Hermann, Paris, 1971.

\bibitem{BourbakiTGII_alt}
{\sc Bourbaki, N.}
\newblock {\em {T}opologie g\'en\'erale. {C}hapitres 5 \`a 10}.
\newblock Hermann, Paris, 1974.

\bibitem{BourbakiEVT81}
{\sc Bourbaki, N.}
\newblock {\em Espaces vectoriels topologiques. {C}hapitres 1 {\`a} 5}.
\newblock Masson, Paris, 1981.

\bibitem{Caprace-Monod_rel}
{\sc Caprace, P.-E., and Monod, N.}
\newblock Relative amenability.
\newblock {\em Groups Geom. Dyn. 8}, 3 (2014), 747--774.

\bibitem{Choquet62a}
{\sc Choquet, G.}
\newblock Ensembles et c\^ones convexes faiblement complets.
\newblock {\em C. R. Acad. Sci. Paris 254\/} (1962), 1908--1910.

\bibitem{Choquet63}
{\sc Choquet, G.}
\newblock Les c{\^o}nes convexes faiblement complets dans l'analyse.
\newblock In {\em Proc. {I}nternat. {C}ongr. {M}athematicians ({S}tockholm,
  1962)}. Inst. Mittag-Leffler, Djursholm, 1963, pp.~317--330.

\bibitem{Choquet_LAII}
{\sc Choquet, G.}
\newblock {\em Lectures on analysis. {V}ol. {II}: {R}epresentation theory}.
\newblock Edited by J. Marsden, T. Lance and S. Gelbart. W. A. Benjamin, Inc.,
  New York-Amsterdam, 1969.

\bibitem{Davis-Figiel-Johnson-Pelczynski}
{\sc Davis, W.~J., Figiel, T., Johnson, W.~B., and Pe{\l}czy{\'n}ski, A.}
\newblock Factoring weakly compact operators.
\newblock {\em J. Funct. Anal. 17\/} (1974), 311--327.

\bibitem{Day61}
{\sc Day, M.~M.}
\newblock Fixed-point theorems for compact convex sets.
\newblock {\em Illinois J. Math. 5\/} (1961), 585--590.

\bibitem{Harpe82s}
{\sc de~la Harpe, P.}
\newblock Classical groups and classical {L}ie algebras of operators.
\newblock In {\em Operator algebras and applications, {P}art {I}}, vol.~38 of
  {\em Proc. Sympos. Pure Math.} Amer. Math. Soc., Providence, R.I., 1982,
  pp.~477--513.

\bibitem{Dieudonne60}
{\sc Dieudonn{\'e}, J.}
\newblock Sur le produit de composition. {II}.
\newblock {\em J. Math. Pures Appl. (9) 39\/} (1960), 275--292.

\bibitem{Elliott-Robert-Santiago}
{\sc Elliott, G.~A., Robert, L., and Santiago, L.}
\newblock The cone of lower semicontinuous traces on a {$C^*$}-algebra.
\newblock {\em Amer. J. Math. 133}, 4 (2011), 969--1005.

\bibitem{Eymard72}
{\sc Eymard, P.}
\newblock {\em Moyennes invariantes et repr\'esentations unitaires}.
\newblock Springer-Verlag, Berlin, 1972.
\newblock Lecture Notes in Mathematics, Vol. 300.

\bibitem{Folner}
{\sc F{\o}lner, E.}
\newblock On groups with full {B}anach mean value.
\newblock {\em Math. Scand. 3\/} (1955), 243--254.

\bibitem{Furstenberg63}
{\sc Furstenberg, H.}
\newblock A {P}oisson formula for semi-simple {L}ie groups.
\newblock {\em Ann. of Math. (2) 77\/} (1963), 335--386.

\bibitem{Furstenberg65}
{\sc Furstenberg, H.}
\newblock Translation-invariant cones of functions on semi-simple {L}ie groups.
\newblock {\em Bull. Amer. Math. Soc. 71\/} (1965), 271--326.

\bibitem{Gheysens-Monod_pre}
{\sc Gheysens, M., and Monod, N.}
\newblock Fixed points for bounded orbits in {H}ilbert spaces.
\newblock to appear in Ann. Sci. \'Ec. Norm. Sup\'er.

\bibitem{Glasner-Megrelishvili}
{\sc Glasner, E., and Megrelishvili, M.}
\newblock On fixed point theorems and nonsensitivity.
\newblock {\em Israel J. Math. 190\/} (2012), 289--305.

\bibitem{Glasner87}
{\sc Glasner, S.}
\newblock Distal and semisimple affine flows.
\newblock {\em Amer. J. Math. 109}, 1 (1987), 115--131.

\bibitem{GreenleafBook}
{\sc Greenleaf, F.~P.}
\newblock {\em Invariant means on topological groups and their applications}.
\newblock Van Nostrand Mathematical Studies, No. 16. Van Nostrand Reinhold Co.,
  New York, 1969.

\bibitem{Grigorchuk98}
{\sc Grigorchuk, R.~I.}
\newblock On {T}ychonoff groups.
\newblock In {\em Geometry and cohomology in group theory ({D}urham, 1994)},
  vol.~252 of {\em London Math. Soc. Lecture Note Ser.} Cambridge Univ. Press,
  Cambridge, 1998, pp.~170--187.

\bibitem{Conze-Guivarch}
{\sc Guivarc'h, Y., and Conze, J.-P.}
\newblock Propri\'et\'e de droite fixe et fonctions harmoniques positives.
\newblock In {\em Th\'eorie du potentiel et analyse harmonique}. Springer,
  Berlin, 1974, pp.~126--132. Lecture Notes in Math., Vol. 404.

\bibitem{Hahn67}
{\sc Hahn, F.~J.}
\newblock A fixed-point theorem.
\newblock {\em Math. Systems Theory 1\/} (1967), 55--57.

\bibitem{Hausdorff14_article}
{\sc Hausdorff, F.}
\newblock Bemerkung {\"u}ber den {I}nhalt von {P}unktmengen.
\newblock {\em Math. Ann. 75}, 3 (1914), 428--433.

\bibitem{Jenkins73}
{\sc Jenkins, J.~W.}
\newblock Growth of connected locally compact groups.
\newblock {\em J. Functional Analysis 12}, 1 (1973), 113--127.

\bibitem{Jenkins74}
{\sc Jenkins, J.~W.}
\newblock F{\o}lner's condition for exponentially bounded groups.
\newblock {\em Math. Scand. 35\/} (1974), 165--174.

\bibitem{Jenkins76}
{\sc Jenkins, J.~W.}
\newblock A fixed point theorem for exponentially bounded groups.
\newblock {\em J. Functional Analysis 22}, 4 (1976), 346--353.

\bibitem{Jenkins80}
{\sc Jenkins, J.~W.}
\newblock On group actions with nonzero fixed points.
\newblock {\em Pacific J. Math. 91}, 2 (1980), 363--371.

\bibitem{Kaimanovich83s}
{\sc Ka{\u\i}manovich, V.~A.}
\newblock Examples of nonabelian discrete groups with nontrivial exit boundary.
\newblock {\em Zap. Nauchn. Sem. Leningrad. Otdel. Mat. Inst. Steklov. (LOMI)
  123\/} (1983), 167--184.

\bibitem{Kakutani38}
{\sc Kakutani, S.}
\newblock Two fixed-point theorems concerning bicompact convex sets.
\newblock {\em Proc. Imp. Acad. 14}, 7 (1938), 242--245.

\bibitem{Kellerhals_PhD}
{\sc Kellerhals, J.}
\newblock {\em Supramenable groups}.
\newblock PhD thesis, EPFL, 2014.

\bibitem{Kellerhals-Monod-Rordam}
{\sc Kellerhals, J., Monod, N., and R{\o}rdam, M.}
\newblock Non-supramenable groups acting on locally compact spaces.
\newblock {\em Doc. Math. 18\/} (2013), 1597--1626.

\bibitem{Klee55c}
{\sc Klee, Jr., V.~L.}
\newblock Separation properties of convex cones.
\newblock {\em Proc. Amer. Math. Soc. 6\/} (1955), 313--318.

\bibitem{Mark_PhD}
{\sc Mark, J.~L.}
\newblock {\em The Fixed-Point Question for Bounded Non-Seperating Plane
  Continua}.
\newblock PhD thesis, Oklahoma State University, 1975.

\bibitem{Matui-Rordam}
{\sc Matui, H., and R{\o}rdam, M.}
\newblock Universal properties of group actions on locally compact spaces.
\newblock {\em J. Funct. Anal. 268}, 12 (2015), 3601--3648.

\bibitem{Monod-Popa}
{\sc Monod, N., and Popa, S.}
\newblock On co-amenability for groups and von {N}eumann algebras.
\newblock {\em C. R. Math. Acad. Sci. Soc. R. Can. 25}, 3 (2003), 82--87.

\bibitem{Moore_ramsey}
{\sc Moore, J.~T.}
\newblock Amenability and {R}amsey theory.
\newblock {\em Fund. Math. 220}, 3 (2013), 263--280.

\bibitem{Asplund-Namioka}
{\sc Namioka, I., and Asplund, E.}
\newblock A geometric proof of {R}yll--{N}ardzewski's fixed point theorem.
\newblock {\em Bull. Amer. Math. Soc. 73\/} (1967), 443--445.

\bibitem{Ozawa_Cst}
{\sc Ozawa, N.}
\newblock Lecture on the {F}urstenberg boundary and {$\rm C^*$}-simplicity.
\newblock Annual Meeting of Operator Theory and Operator Algebras, (available
  online), Tokyo, 2014.

\bibitem{Pestov}
{\sc Pestov, V.~G.}
\newblock On some questions of {E}ymard and {B}ekka concerning amenability of
  homogeneous spaces and induced representations.
\newblock {\em C. R. Math. Acad. Sci. Soc. R. Can. 25}, 3 (2003), 76--81.

\bibitem{Reiter68}
{\sc Reiter, H.}
\newblock {\em Classical harmonic analysis and locally compact groups}.
\newblock Clarendon Press, Oxford, 1968.

\bibitem{Rickert67}
{\sc Rickert, N.~W.}
\newblock Amenable groups and groups with the fixed point property.
\newblock {\em Trans. Amer. Math. Soc. 127\/} (1967), 221--232.

\bibitem{Rosenblatt_PhD}
{\sc Rosenblatt, J.~M.}
\newblock {\em Invariant linear functionals and counting conditions}.
\newblock PhD thesis, University of Washington, Seattle, 1972.

\bibitem{Rosenblatt73}
{\sc Rosenblatt, J.~M.}
\newblock A generalization of {F}{\o}lner's condition.
\newblock {\em Math. Scand. 33\/} (1973), 153--170.

\bibitem{Rosenblatt74}
{\sc Rosenblatt, J.~M.}
\newblock Invariant measures and growth conditions.
\newblock {\em Trans. Amer. Math. Soc. 193\/} (1974), 33--53.

\bibitem{RudinFA}
{\sc Rudin, W.}
\newblock {\em Functional analysis}.
\newblock McGraw-Hill Inc., New York, 1991.

\bibitem{Ryll-Nardzewski62}
{\sc Ryll-Nardzewski, C.}
\newblock Generalized random ergodic theorems and weakly almost periodic
  functions.
\newblock {\em Bull. Acad. Polon. Sci. S\'er. Sci. Math. Astronom. Phys. 10\/}
  (1962), 271--275.

\bibitem{Ryll-Nardzewski}
{\sc Ryll-Nardzewski, C.}
\newblock On fixed points of semigroups of endomorphisms of linear spaces.
\newblock In {\em Proc. {F}ifth {B}erkeley {S}ympos. {M}ath. {S}tatist. and
  {P}robability ({B}erkeley, {C}alif., 1965/66), {V}ol. {II}: {C}ontributions
  to {P}robability {T}heory, {P}art 1}. Univ. California Press, Berkeley,
  Calif., 1967, pp.~55--61.

\bibitem{Schaefer}
{\sc Schaefer, H.~H.}
\newblock {\em Topological vector spaces}, second~ed., vol.~3 of {\em Graduate
  Texts in Mathematics}.
\newblock Springer-Verlag, New York, 1999.
\newblock (With M.P.H. Wolff).

\bibitem{ShalomQI}
{\sc Shalom, Y.}
\newblock Harmonic analysis, cohomology, and the large-scale geometry of
  amenable groups.
\newblock {\em Acta Math. 192}, 2 (2004), 119--185.

\bibitem{Sierpinski46}
{\sc Sierpi{\'n}ski, W.}
\newblock Sur la non-existence des d{\'e}compositions paradoxales d'ensembles
  lin{\'e}aires.
\newblock {\em Actas Acad. Ci. Lima 9\/} (1946), 113--117.

\bibitem{Sz-Nagy50}
{\sc Sz.-Nagy, B.}
\newblock Une caract\'erisation affine de l'ensemble des fonctions positives
  dans l'espace {$L^2$}.
\newblock {\em Acta Sci. Math. Szeged 12\/} (1950), 228--238.
\newblock Leopoldo Fej\'er et Frederico Riesz LXX annos natis dedicatus, Pars
  A.

\bibitem{Tarski38}
{\sc Tarski, A.}
\newblock Algebraische {F}assung des {M}a{\ss}problems.
\newblock {\em Fund. Math. 31\/} (1938), 47--66.

\bibitem{vonNeumann29S}
{\sc von Neumann, J.}
\newblock Zur allgemeinen {T}heorie des {M}a{\ss}es.
\newblock {\em Fund. Math. 13\/} (1929), 73--116.

\end{thebibliography}

\end{document}